\newtheorem{theorem}{Theorem}[section]
\newtheorem{lemma}[theorem]{Lemma}
\newtheorem{proposition}{Proposition}[section]
\newtheorem{corollary}{Corollary}[section]
\theoremstyle{definition}
\theoremstyle{remark}
\numberwithin{equation}{section}
\begin{document}

\title[Polyatomic BGK model near Maxwellians]{Ellipsoidal BGK model for polyatomic molecules near Maxwellians: A dichotomy in the dissipation estimate}

\author{ SEOK-BAE YUN }
\address{Department of Mathematics, Sungkyunkwan University, Suwon 440-746, Republic of Korea}
\email{sbyun01@skku.edu}



\keywords{BGK model, polyatomic gases, Boltzmann equation, kinetic theory of gases, dissipation estimate}
\thanks{This research was supported by Basic Science Research Program through the National Research Foundation of Korea(NRF) funded by the Ministry of Education(NRF-2016R1D1A1B03935955)}
\begin{abstract}
We consider the global existence and asymptotic behavior of classical solutions to the ellipsoidal BGK model for polyatomic molecules when the initial data starts sufficiently close to a global polyatomic Maxwellian. We observe that the linearized relaxation operator is decomposed into a trully polyatomic part and a essentially monatomic part, leading to a dichotomy in the dissipative property in the sense that the degeneracy of the dissipation shows an abrupt jump as the relaxation parameter $\theta$ reaches zero. Accordingly,
we employ two different sets of micro-macro system  to
derive the full coercivity and close the energy estimate.
\end{abstract}
\maketitle
\section{introduction}

 The collective dynamics of rarefied gases at the mesoscopic scale is described by the celebrated Boltzmann equation.
But the practical application of the Boltzmann equation has been restricted by its highly resource-consuming features such as the complicated structure of the collision operator, high dimensionality and stiffness problem.
In this regard, Bhatnagar, Gross, Krook \cite{BGK} and, independently Welander \cite{Wel}, suggested a model equation by
replacing the collision operator with a relaxation operator
which still keeps the most important features of the Boltzmann equation such as the conservation laws, $H$-theorem and the correct hydrodynamic limit to the Euler equation. Ever since it was introduced,  the BGK model has been widely used in place of the Boltzmann equation because it reproduces the qualitative features of the Boltzmann dynamics very well
at much lower computational costs.

Both the Boltzmann equation and the BGK  model are derived under the assumption that the gas consists of monatomic molecules. 
The necessity of kinetic equations that account for the collisional dynamics of polyatomic molecules is apparent, considering that
there are very few elements in the nature which stay stable as monatomic molecules at room temperature.
Any attempt for the description of kinematics of polyatomic moleculess, however, must allow some simplifying assumptions or phenomenological description because the diversity of the inner configuration of polyatomic molecules make it almost impossible to express the pre-post collision process in an explicit form, except for some special cases.
One such formulation is so-called the internal energy formulation where a new variable $I$ is introduced to incorporate the information on the non-translational internal energy due to the
molecular structure \cite{ABLP,ALPP,BL,BDLP,Brull3,Cai,KAG,PRS,RS,RS2,TFA}.

In this paper, we study the existence and asymptotic behavior for the polyatomic ellipsoidal BGK model, which is a polyatomic generalization  of the original BGK model using such internal energy formulation: \cite{ABLP,ALPP}:
\begin{align}\label{polyatomic ESBGK}
\begin{split}
\partial_tF+v\cdot \nabla_xF&=A_{\nu,\theta}(\mathcal{M}_{\nu,\theta}(F)-F),\cr
\qquad F(0,x,v,I)&=F_0(x,v,I).
\end{split}
\end{align}
The velocity-energy distribution function $F(t,x,v,I)$ represents the number density on phase point $(x,v)\in \mathbb{T}^3_x\times \mathbb{R}^3_v$ with non-translational internal energy $I^{2/\delta}~(I\geq0)$ at time $t\geq0$.
The parameter $\delta>0$ measures the degree of excitation of non-translational mode of the molecules such as the rotational or vibrational mode.
The collision frequency $A_{\nu,\theta}$ is given by
$A_{\nu,\theta}=(\rho^{\alpha}T^{\beta}_{\delta})/(1-\nu+\theta\nu)$
for some $0\leq \alpha,\beta\leq 1$. ($\rho$ and $T_{\delta}$ are defined below.) Throughout this paper, we fix $\alpha=\beta=1$ for simplicity.
The relaxation parameters $-1/2<\nu<1$ and $0\leq\theta\leq1$ are introduced to
reproduce the correct Prandtl number and the second viscosity coefficient in the Chapman-Enskog expansion \cite{ALPP}.
The number $1/\theta$ is interpreted as the relaxation collision number, which is the average number of collisions needed
to transfer the rotational and vibrational internal energy into the translational energy \cite{ABLP,Cai}.

We define the macroscopic density, momentum, stress tensor and total energy by
\begin{align*}
\rho(t,x)&=\int_{\mathbb{R}^3 \times \mathbb{R}^+} F(t,x,v,I) dvdI,\cr
U(t,x)&=\frac{1}{\rho}\int_{\mathbb{R}^3 \times \mathbb{R}^+} vF(t,x,v,I) dvdI, \cr
\Theta(t,x) &= \frac{1}{\rho}\int_{\mathbb{R}^3 \times \mathbb{R}^+} (v-U)\otimes(v-U)F(t,x,v,I) dvdI, \cr
E(t,x)&=\int_{\mathbb{R}^3 \times \mathbb{R}^+} \left(\frac{1}{2}|v|^2 +I^{\frac{2}{\delta}}\right)F(t,x,v,I) dvdI.
\end{align*}
The total energy is decomposed further into the following three parts:
\[
E=E_{kin}+E_{tr}+E_{I,\delta}
\]
where the kinetic energy $E_{kin}$, the internal energy
due to the translational motion $E_{tr}$, and the internal energy attributed to the internal configuration of the molecules
$E_{I,\delta}$ are given respectively by
\begin{align*}
E_{kin}&=\frac{1}{2}\rho |U|^2,\cr
E_{tr}&=\frac{1}{2}\int_{\mathbb{R}^3 \times \mathbb{R}^+} |v-U|^2 F(t,x,v,I) dvdI, \cr
E_{I,\delta}&=\int_{\mathbb{R}^3 \times \mathbb{R}^+}  I^{\frac{2}{\delta}} F(t,x,v,I) dvdI.
\end{align*}
We also define the total internal energy $E_{\delta}$:
\begin{align*}
E_{\delta}(t,x)&=E_{tr}+E_{I,\delta}\cr
&=\int_{\mathbb{R}^3 \times \mathbb{R}^+} \left(\frac{1}{2}|v-U|^2 +I^{\frac{2}{\delta}}\right)F(t,x,v,I) dvdI,
\end{align*}
from which we can define the corresponding temperatures $T_{\delta}$, $T_{tr}$ and $T_{I,\delta}$ using the equipartition principle:
\begin{align*}
E_{\delta}&=\frac{3+\delta}{2}\rho T_{\delta},\quad E_{tr}=\frac{3}{2}\rho T_{tr}, \quad E_{I,\delta}= \frac{\delta}{2}\rho T_{I,\delta}.
\end{align*}
Consequently, $T_{\delta}$ is represented by a convex combination of $T_{tr}$ and $T_{I,\delta}$:
\begin{eqnarray*}
T_{\delta}=\frac{3}{3+\delta}T_{tr}+\frac{\delta}{3+\delta}T_{I,\delta}.
\end{eqnarray*}
For $-1/2<\nu<1$ and  $0\leq\theta\leq1$, we define the  relaxation  temperature $T_{\nu,\theta}$ and the corrected  temperature tensor $\mathcal{T}_{\theta}$ by
\begin{align*}
T_{\theta}&=\theta T_{\delta}+(1-\theta)T_{I,\delta},\cr
\mathcal{T}_{\nu,\theta}&=\theta T_{\delta}Id+(1-\theta)\big\{(1-\nu)T_{tr}Id+\nu \Theta\big\}.
\end{align*}
Now, the polyatomic ellipsoidal Maxwellian $\mathcal{M}_{\nu,\theta}$ reads
\begin{eqnarray}\label{poly maxwellian}
\mathcal{M}_{\nu,\theta}(F)= \frac{\rho \Lambda_{\delta}}{\sqrt{\det (2\pi \mathcal{T}_{\nu,\theta}}) \,T_{\theta}^{\frac{\delta}{2}}}\exp\left(-\frac{1}{2}(v-U)^{\top}\mathcal{T}^{-1}_{\nu,\theta}(v-U)-\frac{I^{\frac{2}{\delta}}}{T_{\theta}}\right),
\end{eqnarray}
where $\Lambda_{\delta}$ is the normalizing factor: $\Lambda_{\delta}= 1/\int_{\mathbb{R}_+} e^{-I^{2/\delta}} dI$.

The relaxation operator satisfies the following cancellation property:
\begin{eqnarray*}
\int_{\mathbb{R}^3 \times \mathbb{R}^+} (\mathcal{M}_{\nu,\theta}(F)-F)
\left(
\begin{array}{c}1\cr v\cr\frac{1}{2}|v|^2 +I^{\frac{2}{\delta}}\end{array}\right)
 dvdI = 0,
\end{eqnarray*}
which leads to the conservation of mass, momentum and energy:
\begin{align}\label{ConservationLawsF}
\begin{split}
\int F(t)dxdvdI
&=\int F_0dxdvdI,\cr
\int F(t)vdxdvdI
&=\int F_0vdxdvdI,\cr
\int F(t)\left(\frac{|v|^2}{2}+I^{2/\delta}\right)dxdvdI
&=\int F_0\left(\frac{|v|^2}{2}+I^{2/\delta}\right)dxdvdI.
\end{split}
\end{align}
The $H$-theorem for this model was established in \cite{ALPP}
(See also \cite{Brull3,PY2}):
\[
\frac{d}{dt}\int_{\mathbb{R}^3 \times \mathbb{R}^+}F(t)\ln F(t)dvdI\leq 0.
\]
\newline

In this paper, we study the dynamics of the polyatomic BGK model (\ref{polyatomic ESBGK}) near a global polyatomic Maxwellian:
\begin{align}\label{poly maxwellian}
m(v,I)=\frac{\Lambda_{\delta}}{\sqrt{(2\pi)^3}}e^{-\frac{|v|^2}{2}-I^{2/\delta}}.
\end{align}
For this, we define the perturbation $f$ around the equilibrium by
\begin{align}\label{perturbation}
F=m+\sqrt{m}f,\quad F_0=m+\sqrt{m}f_0
\end{align}
and rewrite (\ref{polyatomic ESBGK}) as
\begin{align*}
\partial_t f+v\cdot\nabla_xf&=L_{\nu,\theta}f+\Gamma_{\nu,\theta}(f),\cr
f(0,x,v,I)&=f_0(x,v,I),
\end{align*}
where $L_{\nu,\theta}$ denotes the linearized relaxation operator and $\Gamma_{\nu,\theta}(f)$ is the nonlinear perturbation.
(See Section 2.) We then analyze this linearized polyatomic BGK model in the framework of nonlinear energy methods
developed in, for example, \cite{Guo whole,Guo VMB, Guo VPB}.

The most important step is to verify the dissipative nature of the linearized relaxation operator $L_{\nu,\theta}$.
In this regard, we make a key observation that there exists a dichotomy in the coercive estimate of $L_{\nu,\theta}$ (See Section 3.):
\[
-(1-\nu+\theta\nu)\langle L_{\nu,\theta}f,f\rangle_{L^2_{v,I}}\geq\theta \|(I-P_p)f\|^2_{L^2_{v,I}}, \quad (0<\theta\leq 1)
\]
and
\[
-(1-\nu)\langle L_{\nu,0}f,f\rangle_{L^2_{v,I}}\geq\left(1-|\nu|\right) \|(I-P_m)f\|^2_{L^2_{v,I}}. \quad (\theta=0)
\]
Note that the coefficient  in the l.h.s of the above dissipative estimates change continuously as $\theta$ goes to $0$, while the coefficient in the right hand side jump from $\theta$ to $1-|\nu|$ at $\theta=0$. More importantly,
the macroscopic projection on the right hand side, which determines the degeneracy of the dissipation, changes abruptly from
the projection $P_p$ on
\begin{eqnarray*}
span\bigg\{\sqrt{m },~v\sqrt{m },~\frac{(|v|^2-3)+(2I^{2/\delta}-\delta)}{\sqrt{2(3+\delta)}}\sqrt{m}\bigg\}, \quad (0<\theta\leq 1)
\end{eqnarray*}
to the projection $P_m$ on
\begin{eqnarray*}
span\bigg\{\sqrt{m },~v\sqrt{m },~\frac{|v|^2-3}{\sqrt{6}}\sqrt{m},~ \frac{I^{2/\delta}-\delta}{\sqrt{2\delta}}\sqrt{m }\bigg\}.
\qquad (\theta=0)
\end{eqnarray*}
Therefore, the degeneracy at $\theta=0$ is strictly stronger than the non zero $\theta$ case.

This agrees well with the similar dichotomy in the nonlinear entropy-entropy production estimate
observed in \cite{PY2},  of which the above estimates can be considered as a linearized version:
\begin{align*}
D_{\nu,\theta}(f)&\geq\theta A_{\nu,\theta} H(f|\mathcal{M}_{0,1}),\hspace{1cm}(0<\theta \leq1),
\end{align*}
and
\begin{align*}
D_{\nu,0}(f)&\geq\min\{1-\nu, 1+2\nu\} A_{\nu,0} H(f|\mathcal{M}_{0,0}).\quad (\theta=0)
\end{align*}
Here, $D_{\nu,\theta}$ and $H(f|g)$ denote the entropy production functional and the relative entropy  for (\ref{polyatomic ESBGK}) respectively. See \cite{PY2} for the exact definition of the target polyatomic Maxwellians $\mathcal{M}_{0,1}$ and $\mathcal{M}_{0,0}$.
In \cite{PY2}, however, it is not clear whether such dichotomy is an intrinsic property of the model, or
 can be resolved into a better estimate that interpolates the two entropy production estimates.

It is explicitly shown in Section 3 that the linearized
relaxation operator $L_{\nu,\theta}$ is divided into a truly polyatomic part and an essentially monatomic-like part. In the case $0<\theta\leq 1$, the dissipation is governed by the former, whereas the dissipation for  $\theta=0$ case is governed by the latter. This shows that such dichotomy is intrinsic and cannot be avoided by developing a refined argument.

Recalling that  $\theta^{-1}$ is interpreted as the average number of collisions needed for the non-translational energy
due to the molecular configuration to be transferred, we see that such dichotomy has a nice physical interpretation: when $\theta=0$, the relaxation collision number is infinite, and therefore, no matter how many collisions occur, the exchange between the translational energy and the non-translational energy does not happen, making the kinematics essentially - though not exactly -  that of the monatomic gases. (Note that in the kernel of $P_m$, the translational energy and the non-translational energy are completely split, whereas they are given in an entangled form in the kernel of $P_p$)
We, however, menstion that such physical interpretation alone does not give any hint that there has to be a discontinuity at $\theta=0$.

As a result of such dichotomy, we need to employ two different types of micro-macro decomposition, namely, the polyatomic decomposition:
\[
f=P_pf+(I-P_p)f\quad (0<\theta\leq 1),
\]
and the monotomic-like decomposition:
\[
f=P_mf+(I-P_m)f,\quad (\theta=0).
\]
Therefore, we need to study two different sets of micro-macro equations accordingly, in order to fill up the degeneracy and to derive the full coercivity.

\subsection{Main result}
%
%
%
%
We define the high-order energy functional $\mathcal{E}(f(t))$ :
\begin{eqnarray*}
\mathcal{E}\big(f(t)\big)
=\frac{1}{2}\sum_{|\alpha|+|\beta|\leq N}\|\partial^{\alpha}_{\beta}f(t)\|^2_{L^2_{x,v,I}}+\sum_{|\alpha|+|\beta|\leq N}\int^t_0\|\partial^{\alpha}_{\beta}f(s)\|^2_{L^2_{x,v,I}}ds.
\end{eqnarray*}
\begin{theorem}\label{main theorem}
Let $-1/2<\nu<1$, $0\leq\theta\leq1$ and $N\geq 4$. Suppose that $F_0=m+\sqrt{m}f_0\geq 0$ has the same mass, momentum and energy with $m$:
\begin{eqnarray}\label{ConservationLawsf0}
\begin{split}
\int_{\mathbb{T}^3_x\times\mathbb{R}^3_v\times\mathbb{R}^+_I}f_0\sqrt{m} ~dxdvdI&=0,\cr
\int_{\mathbb{T}^3_x\times\mathbb{R}^3_v\times\mathbb{R}^+_I}f_0v\sqrt{m} ~dxdvdI&=0,\cr
\int_{\mathbb{T}^3_x\times\mathbb{R}^3_v\times\mathbb{R}^+_I}f_0\left\{\frac{1}{2}|v|^2+I^{2/\delta}\right\}\sqrt{m} ~dxdvdI&=0.
\end{split}
\end{eqnarray}
 Then there exist  $\varepsilon>0$  and $C=C(f_0,N,\nu,\theta,\delta)>0$,
 such that if $\mathcal{E}(0)<\varepsilon$, then there exists a unique global in time solution $f$ for (\ref{LBGK}) satisfying
 \begin{enumerate}
 \item The distribution function is non-negative for all $t\geq 0$:
 \[
 F=m+\sqrt{m}\,f\geq 0,
 \]
 and satisfies the conservation laws (\ref{ConservationLawsf}).
 \item The high-order energy functional is uniformly bounded:
 \[
 \mathcal{E}(t)\leq C\mathcal{E}(0).
 \]
\item The initial perturbation decays exponentially fast:
\[
\sum_{|\alpha|+|\beta|\leq N}\|\partial^{\alpha}_{\beta}f(t)\|^2_{L^2_{x,v,I}}\leq C e^{-Ct}.
\]
\end{enumerate}
\end{theorem}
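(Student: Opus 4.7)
The plan is to carry out a nonlinear energy method in the spirit of Guo, adapted to the dichotomy flagged in Section~3. Depending on whether $0<\theta\le 1$ or $\theta=0$, I would use the macroscopic projection $P_\star\in\{P_p,P_m\}$ and the split $f=P_\star f+(I-P_\star)f$. Writing the perturbed equation as $\partial_t f+v\cdot\nabla_x f=L_{\nu,\theta}f+\Gamma_{\nu,\theta}(f)$, the goal is a closed inequality
\begin{equation*}
\frac{d}{dt}\mathcal{E}(f(t))+c\,\mathcal{D}(f(t))\le C\sqrt{\mathcal{E}(f(t))}\,\mathcal{D}(f(t)),
\end{equation*}
where $\mathcal{D}$ is a dissipation functional dominating $\|(I-P_\star)\partial^\alpha_\beta f\|^2_{L^2_{x,v,I}}$ and $\|\partial^\alpha P_\star f\|^2_{L^2_{x,v,I}}$ for $|\alpha|+|\beta|\le N$, so that $\mathcal{D}$ controls $\mathcal{E}$ once the macroscopic piece is in hand.

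First, local existence and positivity $F=m+\sqrt{m}f\ge 0$ would follow from a standard Picard-type iteration on the mild form of \eqref{polyatomic ESBGK}: the gain $A_{\nu,\theta}\mathcal{M}_{\nu,\theta}(F^n)$ is nonnegative, and smallness of $f_0$ (via the embedding $H^N\hookrightarrow L^\infty$ with $N\ge 4$) keeps the iterates bounded. For the a priori step, I would apply $\partial^\alpha_\beta$ to the equation, pair with $\partial^\alpha_\beta f$, and invoke the dichotomic coercivity of Section~3 to extract the microscopic dissipation $\|(I-P_\star)\partial^\alpha f\|^2$. Because $\partial_\beta$ does not commute cleanly with the relaxation operator, the lower-order $v$-derivative commutators that spill out are absorbed by Young's inequality with a small constant in a standard induction on $|\beta|$. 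The nonlinear piece is handled by Taylor expanding $\mathcal{M}_{\nu,\theta}(F)$ around $m$ in the macroscopic parameters $(\rho,U,\Theta,T_{I,\delta})$, distributing derivatives by Leibniz, and placing low-order factors in $L^\infty_x$ to reach the quadratic bound $|\langle\partial^\alpha_\beta\Gamma_{\nu,\theta}(f),\partial^\alpha_\beta f\rangle|\le C\sqrt{\mathcal{E}(f)}\,\mathcal{D}(f)$.

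The decisive step is the macroscopic coercivity. Testing the equation against each element of $\ker L_{\nu,\theta}$, I would derive a Kawashima-type micro-macro system whose unknowns are the coefficients of $P_\star f$ and whose sources lie in $(I-P_\star)f$. The mean-zero conditions \eqref{ConservationLawsf0} propagate under the flow by the conservation laws, so every macroscopic coefficient keeps zero spatial average on $\mathbb{T}^3_x$; Poincar\'e's inequality then upgrades the micro-macro system into an estimate of $\|\partial^\alpha P_\star f\|^2$ by the microscopic dissipation plus a controllable error. Feeding this back closes the differential inequality above, and Gronwall together with a continuation argument yields the global solution, the uniform bound $\mathcal{E}(t)\le C\mathcal{E}(0)$, and the exponential decay.

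The principal obstacle is the $\theta=0$ branch: $P_m f$ carries four macroscopic coefficients, including an independent $I$-temperature fluctuation absent from $P_p f$, so the corresponding micro-macro system has an extra fluid-type equation and requires a separate coercivity analysis. One must verify that the resulting linear system is strictly positive uniformly in $\nu\in(-1/2,1)$, which is exactly where the factor $1-|\nu|$ from the Section~3 estimate enters; and one must accept that this analysis cannot be recovered from the $0<\theta\le 1$ argument by continuity, since the projections themselves jump at the endpoint.
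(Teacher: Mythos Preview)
Your proposal is correct and follows essentially the same route as the paper: local existence by iteration, the dichotomic coercivity of $L_{\nu,\theta}$ combined with a micro-macro system (separately for $P_p$ when $0<\theta\le 1$ and for $P_m$ when $\theta=0$, the latter carrying the extra $I$-temperature coefficient) to recover full coercivity on $\partial^\alpha f$, then an induction on $|\beta|$ with Young's inequality for the velocity-derivative commutators, and a continuity argument to close. The paper's Section~7 executes exactly this scheme, with the nonlinear bounds and macroscopic-field estimates supplied by Sections~4--5 playing the role of your Taylor-expansion step for $\Gamma_{\nu,\theta}$.
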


A brief review on the related literature is in order. We start with the original monatomic BGK model.
The first mathematical study of the BGK model was made in \cite{Perthame} where Perthame established the existence of weak solutions under the assumption of finite mass, energy and entropy.
Perthame and Pulvirenti then studied the existence of unique mild solutions
in a weighted $L^{\infty}$ space in \cite{PP}. These results were  extended, for example, to
Cauchy problem for $L^p$ data \cite{WZ}, plasma \cite{Zhang} or gases under the influence of external forces \cite{ZH}.
Ukai studied the stationary problem in a bounded interval with a fixed boundary condition in \cite{Ukai-BGK}.
For the application of BGK type models to various macroscopic limits, see \cite{B,DMOS,LT,MMM,Mellet,SR1,SR2}.
The existence of classical solutions and their asymptotic behavior were studied in \cite{Bello,Chan,Yun}.
Some error analysis for numerical schemes for BGK model can be found in  \cite{Issautier,RSY}.

Recently, the interest on the ES-BGK model, which is a generalized version of the monatomic BGK model designed to reproduce the physical Prandtl number, revived after the $H$-theorem was verified for this model in \cite{ALPP}. (See also \cite{Brull2,Yun4}). For the existence results of this model in various situations, see \cite{DWY, PY1,Yun2,Yun3}.

The study of the ellipsoidal BGK model for polyatomic particles started in \cite{ALPP} is in its initial stage. The $H$-theorem was shown to hold in \cite{ALPP,Brull3}. Entropy-entropy production estimate for this model was established in \cite{PY2}, where the dichotomy in the entropy dissipation mechanism mentioned above, was first observed. The extension of \cite{PP} arguments to the polyatomic case was made in \cite{PY}. In the near-equilibrium regime, no existence result is available so far.

We mention that there has been an alternative approach besides the internal energy formulation to construct BGK type model for polyatomic molecules< where the polyatomic gas is treated as a mixture of monatomic gases endowed with discrete levels of internal energy \cite{GS,GS2}.

We omit the reference review on the numerical results on  BGK type models (monatomic or polyatomic), since they are huge. Interested readers may refer to \cite{ABLP,ALPP,Cai,GT,Issautier,KAG,PC,PPuppo,RS,RSY} and references therein. For general review
on the mathematical and physical theory of kinetic equations, see \cite{C,CIP,GL,Krem,Sone,Sone2,Stru,UT,V}.
\newline

The followings are the notations and conventions kept throughout this paper:
\begin{itemize}
\item All the constants, usually denoted by $C$ will be defined generically.
\item For $\kappa\in\mathbb{R}^3$, $\kappa^{\top}$ denotes its transpose.
\item For symmetric $n\times n$ matrices $A$ and $B$, $A\leq B$ means that $B-A$ satisfies $k^{\top}\big\{B-A\big\}k\geq0$ for all $k\in \mathbb{R}^n$.

\item When there's no risk of confusion, we  use $\mathcal{E}(t)$ instead of $\mathcal{E}\big(f(t)\big)$ for simplicity. The latter notation will be employed when the dependency needs to be clarified.
\item We slightly abuse the notation to define the summation on the index set $i<j$ by
\[
\sum_{i<j}a_{ij}=a_{12}+a_{23}+a_{31}.
\]
\item  $\langle\cdot,\cdot\rangle_{L^2_{v,I}}$ and $\langle\cdot,\cdot\rangle_{L^2_{x,v,I}}$ denote the standard $L^2$ inner product on
$\mathbb{R}^3_v\times\mathbb{R}^{+}_{I}$ and $\mathbb{T}^3_x\times \mathbb{R}^3_v\times\mathbb{R}^{+}_{I}$ respectively:
\begin{align*}\begin{split}
\langle f,g\rangle_{L^2_{v,I}}&=\int_{\mathbb{R}^3\times\mathbb{R}^+}f(v,I)g(v,I)dvdI,\cr
\langle f,g\rangle_{L^2_{x,v,I}}&=\int_{\mathbb{T}^3\times \mathbb{R}^3\times\mathbb{R}^+}f(x,v,I)g(x,v,I)dxdvdI.
\end{split}\end{align*}
\item  $\|\cdot\|_{L^2_{v,I}}$ and $\|\cdot\|_{L^2_{x,v,I}}$ denote the standard $L^2$ norms on
$\mathbb{R}^3_v\times\mathbb{R}^{+}_{I}$ and $\mathbb{T}^3_x\times \mathbb{R}^3_v\times\mathbb{R}^{+}_{I}$ respectively:
\begin{align*}
\|f\|_{L^2_{v,I}}&=\Big(\int_{\mathbb{R}^3\times\mathbb{R}^+}|f(v,I)|^2dvdI\Big)^{\frac{1}{2}},\cr
\|f\|_{L^2_{x,v,I}}&=\Big(\int_{\mathbb{T}^3\times \mathbb{R}^3\times\mathbb{R}^+}|f(x,v,I)|^2dxdvdI\Big)^{\frac{1}{2}}.
\end{align*}
\item We use the following notations for the multi-indices and differential operators:
\begin{eqnarray*}
\alpha=[\alpha_0,\alpha_1,\alpha_2,\alpha_3,\alpha_4],\quad \beta=[\beta_1,\beta_2,\beta_3],
\end{eqnarray*}
and
\begin{align*}
\partial^{\alpha}_{\beta}&=\partial^{\alpha_0}_t\partial^{\alpha_1}_{x_1}\partial^{\alpha_2}_{x_2}\partial^{\alpha_3}_{x_3}
\partial^{\alpha_4}_I
\partial^{\beta_1}_{v_1}\partial^{\beta_2}_{v_2}\partial^{\beta_3}_{v_3}.
\end{align*}
\end{itemize}

The paper is organized as follows: In Section 2, we consider the linearization of the relaxation operator.  Then section 3 is devoted to the coercivity estimate for the linearized relaxation operator.
We treat the case $0<\theta\leq1$ and $\theta=0$ separately, yielding different dissipation estimate in each case.
In Section 4, we derive various estimates for macroscopic fields.
In Section 5, we consider the existence of the local in time classical solution. Section 6 is devoted to
the study of the micro-macro systems, where, due to the dichotomy observed in Section 4, the case $0<\theta\leq1$ and $\theta=0$ are considered separately. Finally, we prove the main result in Section 7.
%
%
%
%
%
%
%
%
%
%
\section{linearization of the polyatomic BGK model}
In this section, we carry out the linearization of (\ref{polyatomic ESBGK}) around the normalized global polyatomic Maxwellian (\ref{poly maxwellian}).
%
\subsection{Transitional fields:} Let $F_{\eta}$ denote the transition from the solution $F$ of (\ref{polyatomic ESBGK}) to the global polyatomic Maxwellian $m$:
\[
F_{\eta}=\eta F+(1-\eta)m=m+\eta f\sqrt{m}.\qquad (0\leq\eta\leq1)
\]
where $f$ is defined in (\ref{perturbation}).
In view of the following identities:
\begin{align*}
&\rho=\int_{\mathbb{R}^3\times \mathbb{R}_+}FdvdI,
\quad\rho U=\int_{\mathbb{R}^3\times \mathbb{R}_+}FvdvdI,\cr
&\rho\mathcal{T}_{\nu,\theta}+\frac{\theta}{3+\delta}\rho|U|^2Id+(1-\theta)\left\{\frac{1-\nu}{3}\rho|U|^2Id+\nu\rho U\otimes U\right\}\cr
&\hspace{2cm}=\theta\left\{\int_{\mathbb{R}^3\times \mathbb{R}_+}F\left(\frac{1}{3+\delta}|v|^2+\frac{2}{3+\delta}I^{2/\delta}\right)
dvdI\right\}Id\cr
&\hspace{2cm}+(1-\theta)\left\{\int_{\mathbb{R}^3\times \mathbb{R}_+}F\left(\frac{1-\nu}{3}|v|^2Id+\nu v\otimes v\right)dvdI\right\},\cr
&\rho T_{\theta}+\frac{\theta}{3+\delta}\rho|U|^2
=\theta\left\{\int_{\mathbb{R}^3\times \mathbb{R}_+}F\left(\frac{1}{3+\delta}|v|^2+\frac{2}{3+\delta}I^{2/\delta}\right)dvdI\right\}\cr
&\hspace{2.7cm}+(1-\theta)\left\{\frac{2}{\delta}\int_{\mathbb{R}^3\times \mathbb{R}_+}FI^{2/\delta}dvdI\right\},
\end{align*}
we define transitional macroscopic fields:
$\rho_{\eta}$, $U_{\eta}$, $\mathcal{T}_{\nu,\theta \eta}$ and $T_{\theta \eta}$  by
\begin{align}\label{TMF}
\begin{split}
&\rho_{\eta}=\int_{\mathbb{R}^3\times \mathbb{R}_+}F_{\eta}dvdI,\quad
\rho_{\eta} U_{\eta}=\int_{\mathbb{R}^3\times \mathbb{R}_+}F_{\eta}vdvdI,\cr
&\rho_{\eta}\mathcal{T}_{\nu,\theta\eta}+\frac{\theta}{3+\delta}\rho_{\eta}|U_{\eta}|^2Id
+(1-\theta)\left\{\frac{1-\nu}{3}\rho_{\eta}|U_{\eta}|^2Id+\nu\rho_{\eta} U_{\eta}\otimes U_{\eta}\right\}\cr
&\qquad=\theta\left\{\int_{\mathbb{R}^3\times \mathbb{R}_+}F_{\eta}\left(\frac{1}{3+\delta}|v|^2+\frac{2}{3+\delta}I^{2/\delta}\right)
dvdI\right\}Id\cr
&\qquad+(1-\theta)\left\{\int_{\mathbb{R}^3\times \mathbb{R}_+}F_{\eta}\left(\frac{1-\nu}{3}|v|^2Id+\nu v\otimes v\right)dvdI\right\},\cr
&\rho_{\eta} T_{\theta\eta}+\frac{\theta}{3+\delta}\rho_{\eta}|U_{\eta}|^2\cr
&\qquad=\theta\left\{\int_{\mathbb{R}^3\times \mathbb{R}_+}F_{\eta}\left(\frac{1}{3+\delta}|v|^2+\frac{2}{3+\delta}I^{2/\delta}\right)dvdI\right\}\cr
&\qquad+(1-\theta)\left\{\frac{2}{\delta}\int_{\mathbb{R}^3\times \mathbb{R}_+}F_{\eta}I^{2/\delta}dvdI\right\},
\end{split}
\end{align}
and the transitional polyatomic Maxwellian:
\begin{eqnarray}\label{poly maxwellian eta}
\mathcal{M}_{\nu,\theta}(\eta)= \frac{\rho_{\eta} \Lambda_{\delta}}{\sqrt{\det (2\pi \mathcal{T}_{\theta\eta}}) \,T_{\theta\eta}^{\frac{\delta}{2}}}\exp\left(-\frac{1}{2}(v-U_{\eta})^{\top}\mathcal{T}^{-1}_{\nu,\theta\eta}
(v-U_{\eta})-\frac{I^{\frac{2}{\delta}}}{T_{\theta\eta}}\right).
\end{eqnarray}
For simplicity, we set
\begin{align*}
A(\eta)&=\rho_{\eta},\cr
B(\eta)&=\rho_{\eta} U_{\eta},\cr
C(\eta)&=\rho_{\eta}\mathcal{T}_{\nu,\theta\eta}+\frac{\theta}{3+\delta}\rho_{\eta}|U_{\eta}|^2Id
+(1-\theta)\left\{\frac{1-\nu}{3}\rho_{\eta}|U_{\eta}|^2Id+\nu\rho_{\eta} U_{\eta}\otimes U_{\eta}\right\},\cr
D(\eta)&=\rho_{\eta} T_{\theta\eta}+\frac{\theta}{3+\delta}\rho_{\eta}|U_{\eta}|^2.
\end{align*}

%
%
%
Note that $A(0)=1$, $B(0)=0$, $C(0)=Id$, $D(0)=1$ since $F^0=m$, and the macroscopic fields can be recovered from the following relations:
\begin{align}\label{sense}
\begin{split}
\rho_{\eta}&=A(\eta),\cr
U_{\eta}&=\frac{B(\eta)}{A(\eta)},\cr
\mathcal{T}_{\nu,\theta\eta}&=\frac{A(\eta)C(\eta)-\left\{\frac{\theta}{3+\delta}|B(\eta)|^2Id+(1-\theta)\left(\frac{1-\nu}{3}|B(\eta)|^2+\nu B(\eta)\otimes B(\eta)\right)\right\}}{|A(\eta)|^2},\cr
T_{\theta\eta}&=\frac{A(\eta)D(\eta)-\frac{\theta}{3+\delta}|B(\eta)|^2}{|A(\eta)|^2}.
\end{split}
\end{align}

%
%
%
%
%
The following identity plays an important role throughout the linearization procedure.
\begin{lemma}\label{matrix} The Jacobian matrix $J(\eta)=\frac{\partial (\rho_{\eta}, U_{\eta}, \mathcal{T}_{\nu,\theta\eta},T_{\theta\eta})}{\partial\big(A(\eta),B(\eta),C(\eta),D(\eta)\big)}$ is given by
\begin{eqnarray*}
J(\eta)=
\left(\begin{array}{cccccccccccc}
1&0&0&0&0&0&0&0&0&0&0\\
-\frac{U_{\eta1}}{\rho_{\eta}}&\frac{1}{\rho_{\eta}}&0&0&0&0&0&0&0&0&0\\
-\frac{U_{\eta2}}{\rho}&0&\frac{1}{\rho_{\eta}}&0&0&0&0&0&0&0&0\\
-\frac{U_{\eta3}}{\rho_{\eta}}&0&0&\frac{1}{\rho_{\eta}}&0&0&0&0&0&0&0\\
\Lambda^{11}_{\eta}&J_+\frac{U_{\eta1}}{\rho_{\eta}}&J_-\frac{U_{\eta2}}{\rho_{\eta}}&J_-\frac{U_{\eta3}}{\rho_{\eta}}&\frac{1}{\rho_{\eta}}&0&0&0&0&0&0\\
\Lambda^{22}_{\eta}&J_-\frac{U_{\eta1}}{\rho_{\eta}}&J_+\frac{U_{\eta2}}{\rho}&J_-\frac{U_{\eta3}}{\rho_{\eta}}&0&\frac{1}{\rho_{\eta}}&0&0&0&0&0\\
\Lambda^{33}_{\eta}&J_-\frac{U_{\eta1}}{\rho_{\eta}}&J_-\frac{U_{\eta2}}{\rho}&J_+\frac{U_{\eta3}}{\rho_{\eta}}&0&0&\frac{1}{\rho_{\eta}}&0&0&0&0\\
\Lambda^{12}_{\eta}&-\nu\frac{ U_{\eta2}}{\rho_{\eta}}&-\nu\frac{U_{\eta1}}{\rho_{\eta}}&0&0&0&0&\frac{1}{\rho_{\eta}}&0&0&0\\
\Lambda^{23}_{\eta}&0&-\nu\frac{U_{\eta3}}{\rho_{\eta}}&-\nu\frac{U_{\eta2}}{\rho_{\eta}}&0&0&0&0&\frac{1}{\rho_{\eta}}&0&0\\
\Lambda^{31}_{\eta}&-\nu\frac{U_{\eta3}}{\rho_{\eta}}&0&-\nu\frac{U_{\eta1}}{\rho_{\eta}}&0&0&0&0&0&\frac{1}{\rho_{\eta}}&0\\
\Omega_{\eta}&-\frac{2\theta}{3+\delta}\frac{U_{\eta1}}{\rho_{\eta}}&-\frac{2\theta}{3+\delta}\frac{U_{\eta2}}{\rho_{\eta}}&-\frac{2\theta}{3+\delta}\frac{U_{3\eta}}{\rho_{\eta}}&0&0&0&0&0&0&\frac{1}{\rho_{\eta}}
\end{array}
\right),
\end{eqnarray*}
where $\Lambda^{ij}_{\eta}$, $\Omega_{\eta}$ and $J_{\pm}$  are
\begin{align*}
\Lambda^{ii}_{\eta}&=\frac{1}{\rho_{\eta}}\left\{-\mathcal{T}^{ii}_{\nu,\theta\eta}
+\left(\frac{\theta}{3+\delta}+(1-\theta)\frac{1-\nu}{3}\right)|U_{\eta}|^2+\nu(1-\theta) U^2_{\eta i}\right\},\cr
\Lambda^{ij}_{\eta}&=\frac{1}{\rho_{\eta}}\left\{-\mathcal{T}^{ij}_{\nu,\theta\eta}+\nu (1-\theta)U_{\eta i}U_{\eta j}\right\},\cr
\Omega_{\eta}&=\frac{1}{\rho}\left(-T_{\theta\eta}+\frac{\theta}{3+\delta}|U_{\eta}|^2\right),\cr
J_+&=-\left\{\frac{\theta}{3+\delta}+(1-\theta)\frac{1+2\nu}{3}\right\},\cr
J_-&=-\left\{\frac{\theta}{3+\delta}+(1-\theta)\frac{1-\nu}{3}\right\}.
\end{align*}
\end{lemma}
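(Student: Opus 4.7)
The plan is a direct computation from (\ref{sense}), which already inverts the definitions (\ref{TMF}) and exhibits $\rho_\eta$, $U_\eta$, $\mathcal{T}_{\nu,\theta\eta}$, and $T_{\theta\eta}$ as explicit rational functions of the moment variables $A(\eta)$, $B(\eta)$, $C(\eta)$, $D(\eta)$. The entire lemma therefore reduces to entry-by-entry partial differentiation, followed by the substitutions $A=\rho_\eta$ and $B_i=\rho_\eta U_{\eta i}$ to rewrite every entry in terms of the macroscopic fields.

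I would first dispatch the easy rows. The identity $\rho_\eta=A$ gives the top row $e_1$. For $U_{\eta i}=B_i/A$ the quotient rule yields $\partial_A U_{\eta i}=-U_{\eta i}/\rho_\eta$ and $\partial_{B_j}U_{\eta i}=\delta_{ij}/\rho_\eta$, accounting for rows two through four. For the last row I differentiate $T_{\theta\eta}=(AD-\frac{\theta}{3+\delta}|B|^2)/A^2$ to read off $\partial_D T_{\theta\eta}=1/\rho_\eta$ and $\partial_{B_i}T_{\theta\eta}=-\frac{2\theta}{3+\delta}U_{\eta i}/\rho_\eta$ immediately; for $\partial_A T_{\theta\eta}=-D/A^2+\frac{2\theta}{3+\delta}|B|^2/A^3$ I substitute $D=\rho_\eta T_{\theta\eta}+\frac{\theta}{3+\delta}\rho_\eta|U_\eta|^2$ from (\ref{TMF}), after which the two $\frac{\theta}{3+\delta}|U_\eta|^2/\rho_\eta$ contributions combine to give exactly $\Omega_\eta$.

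The $\mathcal{T}$-block is the substantive part. For each pair $(i,j)$ I would solve the $(i,j)$-component of the $C$-relation in (\ref{sense}) to get
\[
\mathcal{T}^{ij}_{\nu,\theta\eta}=\frac{C^{ij}}{A}-\frac{1}{A^2}\left\{\frac{\theta}{3+\delta}|B|^2\delta_{ij}+(1-\theta)\left(\frac{1-\nu}{3}|B|^2\delta_{ij}+\nu B_iB_j\right)\right\},
\]
and differentiate. The $\partial_{C^{ij}}$-derivative is $1/\rho_\eta$. For $\partial_A$, after using the same relation backwards to eliminate $C^{ij}$ in favor of $\mathcal{T}^{ij}_{\nu,\theta\eta}$ plus quadratic terms in $U_\eta$, the quadratic pieces coming from $-C^{ij}/A^2$ cancel half of the $\frac{2}{A^3}|B|^2$-type contribution, leaving a single copy that reproduces $\Lambda^{ii}_\eta$ when $i=j$ and $\Lambda^{ij}_\eta$ when $i\neq j$. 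For $\partial_{B_k}$ I split by cases: when $i=j$, differentiating $|B|^2$ and $B_i^2$ produces the $J_+$-coefficient in the $B_i$-column and the $J_-$-coefficient in each $B_k$-column with $k\neq i$; when $i\neq j$ the $\delta_{ij}$-terms vanish, so only $\nu B_iB_j$ contributes, producing a nonzero entry solely in the $B_i$- and $B_j$-columns (the two $-\nu U_{\eta\cdot}/\rho_\eta$ entries).

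The obstacle is not conceptual but combinatorial: keeping signs straight, tracking the factor $2$ that arises from differentiating $|B|^2$ in one coordinate, and uniformly separating the diagonal $\delta_{ij}|B|^2$ contribution from the off-diagonal $\nu B_iB_j$ contribution across all nine $(i,j)$ positions. Once the diagonal $\mathcal{T}^{ii}$ rows are written out carefully the off-diagonal $\mathcal{T}^{ij}$ rows are in fact easier, since the $\delta_{ij}$ pieces drop out altogether; that is precisely why those three rows contain only two nonzero entries in the $B$-block of the stated Jacobian.
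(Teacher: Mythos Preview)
Your proposal is correct and is exactly the approach the paper takes: the paper's own proof reads in its entirety ``It follows from a straightforward computation using the relations (\ref{sense}). We omit it.'' You have simply fleshed out that omitted computation row by row, which is precisely what is required.
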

\begin{proof}
It follows from a straightforward computation using the relations (\ref{sense}). We omit it.
\end{proof}
The following corollary comes immediately.
\begin{corollary}\label{immediate}
When $F^0=m$, the Jacobian is given by
\begin{eqnarray*}
J(0)=
\left(\begin{array}{ccccccccccc}
1&0&0&0&0&0&0&0&0&0&0\\
0&1&0&0&0&0&0&0&0&0&0\\
0&0&1&0&0&0&0&0&0&0&0\\
0&0&0&1&0&0&0&0&0&0&0\\
-1&0&0&0&1&0&0&0&0&0&0\\
-1&0&0&0&0&1&0&0&0&0&0\\
-1&0&0&0&0&0&1&0&0&0&0\\
0&0&0&0&0&0&0&1&0&0&0\\
0&0&0&0&0&0&0&0&1&0&0\\
0&0&0&0&0&0&0&0&0&1&0\\
-1&0&0&0&0&0&0&0&0&0&1
\end{array}
\right).
\end{eqnarray*}
\end{corollary}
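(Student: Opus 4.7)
The plan is to simply specialize the general Jacobian formula from Lemma \ref{matrix} to $\eta=0$ and identify each entry. Since $F^0=m$, the basic moments give $A(0)=1$, $B(0)=0$, $C(0)=Id$, $D(0)=1$, and the recovery relations (\ref{sense}) then yield
\[
\rho_0 = 1, \quad U_0 = 0, \quad \mathcal{T}_{\nu,\theta,0} = Id, \quad T_{\theta,0} = 1.
\]
These are the only ingredients needed.

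The entries of $J(0)$ now follow by direct substitution. First, every entry of the matrix in Lemma \ref{matrix} that contains a factor of $U_{\eta i}$ vanishes, which instantly kills all off-diagonal terms in rows $2$--$4$ and rows $8$--$11$, as well as the middle columns of rows $5$--$7$. Second, the factors $1/\rho_\eta$ on the diagonal become $1$. It remains to evaluate $\Lambda^{ii}_\eta$, $\Lambda^{ij}_\eta$ and $\Omega_\eta$ at $\eta=0$. Using $\mathcal{T}^{ii}_{\nu,\theta,0} = 1$, $\mathcal{T}^{ij}_{\nu,\theta,0} = 0$ for $i \neq j$, $T_{\theta,0} = 1$, and $U_0 = 0$, we immediately obtain
\[
\Lambda^{ii}_0 = -1, \qquad \Lambda^{ij}_0 = 0 \; (i\neq j), \qquad \Omega_0 = -1,
\]
which account for the $-1$ entries in the first column of rows $5$--$7$ and row $11$ of the stated matrix.

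Assembling these evaluations into the $11\times 11$ layout of Lemma \ref{matrix} reproduces exactly the matrix displayed in the corollary. There is no real obstacle here; the only care required is bookkeeping to ensure that the vanishing of $U_0$ is enough to kill every off-diagonal block (which it is, because in Lemma \ref{matrix} every off-diagonal entry in the first seven columns is either proportional to some $U_{\eta i}$ or to some $\mathcal{T}^{ij}_{\nu,\theta\eta}$ with $i\neq j$). Thus the result is a routine specialization, and the proof reduces to the single sentence offered by the author.
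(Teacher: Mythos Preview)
Your proposal is correct and follows exactly the paper's approach: the corollary is stated as an immediate consequence of Lemma \ref{matrix}, and you have simply made explicit the substitution $\rho_0=1$, $U_0=0$, $\mathcal{T}_{\nu,\theta,0}=Id$, $T_{\theta,0}=1$ into that lemma's general formula. There is nothing to add; your bookkeeping is accurate and matches the one-line justification the paper gives.
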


\begin{lemma} \label{22}We have
\begin{align*}
&(1)~ \frac{\partial\mathcal{M}_{\nu,\theta}(\eta)}{\partial \rho_{\eta}}=\frac{1}{\rho_{\eta}}\mathcal{M}_{\nu,\theta}(\eta),\cr
&(2)~ \nabla_{U_{\eta}}\mathcal{M}_{\nu,\theta}(\eta)
=\frac{1}{2}\left\{\mathcal{T}^{-1}_{\nu,\theta\eta}(v-U_{\eta})+(v-U_{\eta})^{\top}\mathcal{T}^{-1}_{\nu,\theta\eta}\right\}
\mathcal{M}_{\nu,\theta}(\eta),\cr
&(3)~ \frac{\partial\mathcal{M}_{\nu,\theta}(\eta)}{\partial \mathcal{T}^{ii}_{\nu,\theta\eta}}
=\frac{1}{2}\left\{-\frac{1}{\det\mathcal{T}_{\nu,\theta\eta}}
\frac{\partial\big(\det\mathcal{T}_{\nu,\theta\eta}\big)}{\partial \mathcal{T}_{\nu,\theta\eta}^{ij}}
+\big\{(v-U_{\eta})^{\top}\mathcal{T}^{-1}_{\nu,\theta\eta}\}_{i}^2\right\}\mathcal{M}_{\nu,\theta}(\eta),\cr
&(4)~ \frac{\partial\mathcal{M}_{\nu,\theta}(\eta)}{\partial \mathcal{T}^{ij}_{\nu,\theta\eta}}
=\frac{1}{2}\left\{-\frac{1}{\det\mathcal{T}_{\nu,\theta\eta}}
\frac{\partial\big(\det\mathcal{T}_{\nu,\theta\eta}\big)}{\partial \mathcal{T}_{\nu,\theta\eta}^{ij}}
+2\big\{(v-U_{\eta})^{\top}\mathcal{T}^{-1}_{\nu,\theta\eta}\}_{i}
\big\{\mathcal{T}^{-1}_{\nu,\theta\eta}(v-U_{\eta})\big\}_j\right\}\mathcal{M}_{\nu,\theta}(\eta),\cr
&(5)~ \frac{\partial\mathcal{M}_{\nu,\theta}(\eta)}{\partial T_{\theta\eta}}
=\left\{\frac{2I^{2/\delta}-\delta T_{\theta\eta}}{2T_{\theta\eta}^2}\right\}\mathcal{M}_{\nu,\theta}(\eta).
\end{align*}
\end{lemma}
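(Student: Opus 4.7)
The plan is direct: differentiate the closed form (\ref{poly maxwellian eta}) with respect to each scalar parameter, then use the identity $\partial_x \mathcal{M}_{\nu,\theta}(\eta) = (\partial_x \log \mathcal{M}_{\nu,\theta}(\eta))\, \mathcal{M}_{\nu,\theta}(\eta)$ to extract the factored form stated in each item. Writing
\[
\log \mathcal{M}_{\nu,\theta}(\eta) = \log(\rho_\eta \Lambda_\delta) - \tfrac{1}{2}\log\det(2\pi \mathcal{T}_{\nu,\theta\eta}) - \tfrac{\delta}{2}\log T_{\theta\eta} - \tfrac{1}{2}(v-U_\eta)^\top \mathcal{T}^{-1}_{\nu,\theta\eta}(v-U_\eta) - \tfrac{I^{2/\delta}}{T_{\theta\eta}},
\]
each of the five parameters appears in an isolated and explicit way, which isolates the computation.

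For (1) only the prefactor contributes, giving $1/\rho_\eta$ at once. For (2), the prefactor is $U_\eta$-independent, so only the quadratic form contributes; differentiating componentwise and using $(\mathcal{T}^{-1}_{\nu,\theta\eta})^\top = \mathcal{T}^{-1}_{\nu,\theta\eta}$ yields the symmetric expression $\tfrac{1}{2}[\mathcal{T}^{-1}_{\nu,\theta\eta}(v-U_\eta) + (v-U_\eta)^\top \mathcal{T}^{-1}_{\nu,\theta\eta}]$, which I keep in that form to match the notational convention used later in Section 2. For (5), both the prefactor $T_{\theta\eta}^{-\delta/2}$ and the exponent $-I^{2/\delta}/T_{\theta\eta}$ contribute, giving
\[
\partial_{T_{\theta\eta}}\log \mathcal{M}_{\nu,\theta}(\eta) = -\tfrac{\delta}{2T_{\theta\eta}} + \tfrac{I^{2/\delta}}{T_{\theta\eta}^{2}} = \tfrac{2I^{2/\delta} - \delta T_{\theta\eta}}{2T_{\theta\eta}^{2}},
\]
which is precisely the factor asserted.

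The main (mild) subtlety sits in (3) and (4), where two ingredients must be combined: the determinant in the prefactor and the inverse in the quadratic form. For the determinant I invoke Jacobi's formula $\partial (\det \mathcal{T}_{\nu,\theta\eta})/\partial \mathcal{T}^{ij}_{\nu,\theta\eta} = (\det \mathcal{T}_{\nu,\theta\eta})(\mathcal{T}^{-1}_{\nu,\theta\eta})^{ji}$, which after pulling the factor $-\tfrac{1}{2}$ through produces the first bracketed term in both (3) and (4). For the quadratic form I use the matrix-calculus identity $\partial \mathcal{T}^{-1}_{\nu,\theta\eta}/\partial \mathcal{T}^{ij}_{\nu,\theta\eta} = -\mathcal{T}^{-1}_{\nu,\theta\eta}\, e_i e_j^\top\, \mathcal{T}^{-1}_{\nu,\theta\eta}$, giving the contraction $\{(v-U_\eta)^\top \mathcal{T}^{-1}_{\nu,\theta\eta}\}_i \{\mathcal{T}^{-1}_{\nu,\theta\eta}(v-U_\eta)\}_j$. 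The point requiring care is the symmetry convention: since $\mathcal{T}_{\nu,\theta\eta}$ is symmetric, one treats the diagonal entries $\mathcal{T}^{ii}_{\nu,\theta\eta}$ as independent single variables while the off-diagonal pair $\mathcal{T}^{ij}_{\nu,\theta\eta} = \mathcal{T}^{ji}_{\nu,\theta\eta}$ is counted once, which doubles the contribution in (4) and produces the coefficient $2$ there, while in (3) the same contraction collapses to the square $\{(v-U_\eta)^\top \mathcal{T}^{-1}_{\nu,\theta\eta}\}_i^{2}$.

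No additional difficulty arises from regularity: for $\eta$ small, $\mathcal{T}_{\nu,\theta\eta}$ and $T_{\theta\eta}$ remain close to their values at $\eta=0$ given in Corollary \ref{immediate}, so the determinants and inverses above are smooth. The whole proof is therefore a careful application of the chain rule plus Jacobi's formula, with the symmetric-matrix bookkeeping being the only non-mechanical step.
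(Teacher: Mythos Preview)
Your proposal is correct and follows essentially the same approach as the paper: both treat (1), (2), (5) as direct computations and handle (3), (4) via the identity $\partial\{A^{-1}\} = -A^{-1}\{\partial A\}A^{-1}$ together with the observation that, by symmetry, $\partial \mathcal{T}_{\nu,\theta\eta}/\partial \mathcal{T}^{ij}_{\nu,\theta\eta}$ has nonzero entries in both the $ij$ and $ji$ positions, yielding the factor $2$ in (4). Your use of the logarithmic derivative and explicit invocation of Jacobi's formula are minor presentational choices rather than a different argument.
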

\begin{proof}
(1), (2) and (5) follow from direct computations. The proofs for (3) and (4) are similar. We only prove $(4)$. We first compute
\begin{align*}
\frac{\partial\mathcal{M}_{\nu,\theta}(\eta)}{\partial \mathcal{T}^{ij}_{\nu,\theta\eta}}=\frac{1}{2}\left\{-\frac{1}{\det(\mathcal{T}_{\nu,\theta\eta})}
\frac{\partial\big(\det\mathcal{T}_{\nu,\theta\eta}\big)}{\partial \mathcal{T}_{\nu,\theta\eta}^{ij}}
-(v-U_{\eta})^{\top}\left(\frac{\partial \mathcal{T}_{\nu,\theta\eta}^{-1}}{\partial \mathcal{T}^{ij}_{\nu,\theta\eta}}\right)
(v-U_{\eta})\right\}\mathcal{M}_{\nu,\theta}(\eta).
\end{align*}
We then observe that for  any invertible matrix $A$
\begin{eqnarray}\label{gives}
\partial \big\{A^{-1}\big\}=-A^{-1}\left\{\partial A\right\}A^{-1},
\end{eqnarray}
which is obtained by applying $\partial$ on both sides of $AA^{-1}=I$:
\[
\big\{\partial A\big\}A^{-1}+A\partial\big\{A^{-1}\big\}=\partial I=0.
\]
Therefore,
\begin{align*}
(v-U_{\eta})^{\top}\left(\frac{\partial \mathcal{T}_{\nu,\theta\eta}^{-1}}{\partial \mathcal{T}^{ij}_{\nu,\theta\eta}}\right)
(v-U_{\eta})=(v-U_{\eta})^{\top}\mathcal{T}^{-1}_{\nu,\theta\eta}\left(\frac{\partial \mathcal{T}_{\nu,\theta\eta}}{\partial \mathcal{T}^{ij}_{\nu,\theta\eta}}\right)\mathcal{T}^{-1}_{\nu,\theta\eta}
(v-U_{\eta}).
\end{align*}
Finally, since $\left(\frac{\partial \mathcal{T}_{\nu,\theta\eta}}{\partial \mathcal{T}^{ij}_{\nu,\theta\eta}}\right)$
is a matrix whose only non-zero element is $ij$th and $ji$th elements, this simplifies further
\begin{align*}
(v-U_{\eta})^{\top}\mathcal{T}^{-1}_{\nu,\theta\eta}\left(\frac{\partial \mathcal{T}_{\nu,\theta\eta}}{\partial \mathcal{T}^{ij}_{\nu,\theta\eta}}\right)\mathcal{T}^{-1}_{\nu,\theta\eta}
(v-U_{\eta})=2\big\{(v-U_{\eta})^{\top}\mathcal{T}^{-1}_{\nu,\theta\eta}\}_{i}
\big\{\mathcal{T}^{-1}_{\nu,\theta\eta}(v-U_{\eta})\big\}_j.
\end{align*}
This completes the proof.
\end{proof}
\begin{corollary}\label{xx} When $\eta=0$, we have
\begin{align*}
&(1)~ \frac{\partial\mathcal{M}_{\nu,\theta}(0)}{\partial \rho_{\eta}}=m,\cr
&(2)~ \frac{\partial \mathcal{M}_{\nu,\theta}(0)}{\partial U_{i\eta}}=v_im,\hspace{1.6cm} (i=1,2,3)\cr
&(3)~ \frac{\partial\mathcal{M}_{\nu,\theta}(0)}{\partial \mathcal{T}^{ii}_{\nu,\theta\eta}}
=\frac{v_i^2-1}{2}m, \hspace{0.85cm}(1\leq i=j\leq 3)\cr
&(4)~ \frac{\partial\mathcal{M}_{\nu,\theta}(0)}{\partial \mathcal{T}^{ij}_{\nu,\theta\eta}}
=v_iv_jm,\hspace{1.2cm} (1\leq i\neq j\leq3)\cr
&(5)~ \frac{\partial\mathcal{M}_{\nu,\theta}(0)}{\partial T_{\theta\eta}}
=\left\{\frac{2I^{2/\delta}-\delta }{2}\right\}m.
\end{align*}
\end{corollary}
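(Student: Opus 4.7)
The strategy is simply to specialize each of the five identities in Lemma \ref{22} to the reference state $\eta=0$, where by construction $F_{0}=m$ gives
\begin{equation*}
\rho_{0}=1,\qquad U_{0}=0,\qquad \mathcal{T}_{\nu,\theta,0}=\mathrm{Id},\qquad T_{\theta,0}=1,\qquad \mathcal{M}_{\nu,\theta}(0)=m.
\end{equation*}
These base values already follow from the definitions in (\ref{TMF})--(\ref{poly maxwellian eta}) upon plugging $F_{0}=m$ and evaluating the Gaussian integrals, together with the normalization $\Lambda_{\delta}=1/\int_{\mathbb{R}_{+}}e^{-I^{2/\delta}}dI$. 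Items (1), (2) and (5) of the corollary are then immediate substitutions into items (1), (2) and (5) of Lemma \ref{22}: for instance the bracket in (2) collapses to $\tfrac{1}{2}(\mathrm{Id}\,v+v^{\top}\mathrm{Id})=v$ componentwise, and the bracket in (5) collapses to $\tfrac{1}{2T_{\theta,0}^{2}}(2I^{2/\delta}-\delta T_{\theta,0})=\tfrac{1}{2}(2I^{2/\delta}-\delta)$.

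For items (3) and (4) the only genuine piece of work is to evaluate the determinant derivatives
\begin{equation*}
\left.\frac{\partial \det \mathcal{T}_{\nu,\theta\eta}}{\partial \mathcal{T}_{\nu,\theta\eta}^{ij}}\right|_{\eta=0}
\end{equation*}
at the identity matrix. Using the cofactor expansion $\partial\det A/\partial A_{ij}=\mathrm{cof}(A)_{ij}$, the cofactors of $\mathrm{Id}$ are $\delta_{ij}$, so this derivative equals $1$ when $i=j$ and $0$ when $i\neq j$. Combined with $\mathcal{T}^{-1}_{\nu,\theta,0}=\mathrm{Id}$ so that $\{(v-U_{0})^{\top}\mathcal{T}^{-1}_{\nu,\theta,0}\}_{i}=v_{i}$ and $\{\mathcal{T}^{-1}_{\nu,\theta,0}(v-U_{0})\}_{j}=v_{j}$, Lemma \ref{22}(3) becomes $\tfrac{1}{2}(-1+v_{i}^{2})m$ and Lemma \ref{22}(4) becomes $\tfrac{1}{2}(0+2v_{i}v_{j})m=v_{i}v_{j}m$, which are exactly the stated formulas.

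The only potential pitfall is the symmetric matrix convention used when differentiating in $\mathcal{T}^{ij}$ with $i\neq j$; however Lemma \ref{22}(4) already accounts for this (the factor $2$ in front of $\{(v-U_{\eta})^{\top}\mathcal{T}^{-1}_{\nu,\theta\eta}\}_{i}\{\mathcal{T}^{-1}_{\nu,\theta\eta}(v-U_{\eta})\}_{j}$), and the cofactor derivative of $\det$ at $\mathrm{Id}$ is computed with the same convention, so the two terms consistently produce the clean expression $v_{i}v_{j}m$. No further computation is needed; the corollary follows by direct substitution.
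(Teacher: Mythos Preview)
Your proof is correct and follows the same approach as the paper: both simply substitute the base values $\rho_0=1$, $U_0=0$, $\mathcal{T}_{\nu,\theta,0}=\mathrm{Id}$, $T_{\theta,0}=1$ into Lemma~\ref{22}. Your added remark on the cofactor computation for the determinant derivatives is a helpful elaboration of what the paper leaves implicit.
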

\begin{proof}
Note that when $\eta=0$, $F_{\eta}$ reduces to $m$. Therefore, the result follows by inserting
$\rho_0=1$, $U_0=0$, $\mathcal{T}_0=Id$, $T_0=1$ to Lemma \ref{22}.
\end{proof}

%
\subsection{Linearized relaxation operator}
We consider the transitional polyatomic Maxwellian as a function of $\eta$ and set
\begin{align*}
g(\eta)=\mathcal{M}_{\nu,\theta}\left(A(\eta),B(\eta), C(\eta),D(\eta)\right).
\end{align*}
Here, we view $C$ as a 6 dimensional vector $\big(C_{11}, C_{22}, C_{33}, C_{12}, C_{23}, C_{31}\big)$ by symmetry.
Note that $g(\eta)$ depicts the transition from the polyatomic local Maxwellian $\mathcal{M}_{\nu,\theta}(F)$
to the polyatomic global Maxwellian $m(v,I)$.
We expand it using the Taylor's theorem:
\begin{eqnarray}\label{Taylor Expansion}
g(1)=g(0)+g^{\prime}(0)+\int^1_0g^{\prime\prime}(\eta)(1-\eta)d\eta.
\end{eqnarray}
Clearly,
\[
g(0)=m, \mbox{ and } g(1)=\mathcal{M}_{\nu,\theta}(F).
\]
The calculation of the second and third term in the right hand side of (\ref{Taylor Expansion}) is carried out in the following theorem and Proposition \ref{prop21} respectively.
\begin{theorem}\label{expansion}
$g^{\prime}(0)$ is given by
\begin{equation*}
g^{\prime}(0)=(P_{\nu,\theta}f)\sqrt{m},
\end{equation*}
where  $P_{\nu,\theta}$ is defined by
\begin{eqnarray*}
P_{\nu,\theta}f\equiv \theta P_{p}f+(1-\theta)\big\{ P_mf+ \nu(P_1f+P_2f)\big\}.
\end{eqnarray*}
\noindent(1) $P_p$: polyatomic projection:
\begin{align*}
P_{p}f&=\left(\int_{\mathbb{R}^3\times \mathbb{R}_+} f\sqrt{m}dvdI\right) \sqrt{m} \cr
&+\left(\int_{\mathbb{R}^3\times \mathbb{R}_+} fv\sqrt{m}dvdI\right)\cdot v\sqrt{m}\cr
&+\left\{\int_{\mathbb{R}^3\times \mathbb{R}_+} f\left(\frac{(|v|^2-3)+(2I^{\frac{2}{\delta}}-\delta)}{\sqrt{2(3+\delta)}}\right)\sqrt{m}\,dvdI\right\}
\left(\frac{(|v|^2-3)+(2I^{2/\delta}-\delta)}{\sqrt{2(3+\delta)}}\right)\sqrt{m}\,,
\end{align*}
(2) $P_m$: monatomic-like projection:
\begin{align*}
P_{m}f&=\left(\int_{\mathbb{R}^3\times \mathbb{R}_+} f\sqrt{m}dvdI\right)\sqrt{m}\cr
&+\left(\int_{\mathbb{R}^3\times \mathbb{R}_+} fv\sqrt{m}dvdI\right)\cdot v\sqrt{m}\cr
&+\left\{\int_{\mathbb{R}^3\times \mathbb{R}_+} f\left(\frac{|v|^2-3}{\sqrt{6}}\right)\sqrt{m}dvdI\right\}
\left(\frac{|v|^2-3}{\sqrt{6}}\right)\sqrt{m}\cr
&+\left\{\int_{\mathbb{R}^3\times \mathbb{R}_+} f\left(\frac{2I^{\frac{2}{\delta}}-\delta}{\sqrt{2\delta}}\right)\sqrt{m}dvdI\right\}
\left(\frac{2I^{\frac{2}{\delta}}-\delta}{\sqrt{2\delta}}\right)\sqrt{m},
\end{align*}
(3) $P_1$ $\&$ $P_2$: non-diagonal projections:
\begin{align*}
P_{1}f&=\sum_{i<j}
\left\{\int_{\mathbb{R}^3\times \mathbb{R}_+} f\Big(\frac{3v_i^2-|v|^2}{3\sqrt{2}}\Big)\sqrt{m}dvdI\right\}\left(\frac{3v_1^2-|v|^2}{3\sqrt{2}}\right)\sqrt{m}\,,\cr
P_{2}f&= \sum_{i< j}\left(\int_{\mathbb{R}^3\times \mathbb{R}_+} f v_iv_j\sqrt{m}dvdI\right)
v_iv_j\sqrt{m}\,.
\end{align*}
\end{theorem}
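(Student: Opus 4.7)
I will compute $g'(0)$ by applying the chain rule to the composition $g=\mathcal{M}_{\nu,\theta}\circ(\rho,U,\mathcal{T},T)\circ(A,B,C,D)$. The differentials at $\eta=0$ come from three sources already in hand: the derivatives of $A,B,C,D$ at $\eta=0$ are obtained by substituting $f\sqrt m$ for $F_\eta$ in the defining integrals (\ref{TMF}) (because $F_\eta=m+\eta f\sqrt m$ is linear in $\eta$); the inner linearization $(\rho,U,\mathcal{T},T)'(0)$ is given by $J(0)$ of Corollary \ref{immediate}; and the partials $\partial\mathcal{M}_{\nu,\theta}(0)/\partial(\rho,U,\mathcal{T},T)$ are supplied by Corollary \ref{xx}. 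Composing these three steps produces an explicit formula for $g'(0)/\sqrt m$ as a polynomial in $v$ and $I^{2/\delta}$ whose coefficients are the basic moments $\int f\sqrt m$, $\int v_i f\sqrt m$, $\int v_iv_j f\sqrt m$, $\int I^{2/\delta}f\sqrt m$ of $f$.

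The structure of this formula makes the announced dichotomy transparent. The definitions of $C(\eta)$ and $D(\eta)$ in (\ref{TMF}) exhibit a natural $\theta/(1-\theta)$ convex split, which is preserved by the subsequent linear operations. The $\theta$-block is isotropic and involves only the combined moment $\int(|v|^2+2I^{2/\delta})f\sqrt m$; after pairing it with $(v_i^2-1)m/2$ and $(2I^{2/\delta}-\delta)m/2$ it collapses into a coefficient times the single function $(|v|^2-3)+(2I^{2/\delta}-\delta)$, and normalising by $\sqrt{2(3+\delta)}$ identifies this exactly with $\theta P_pf/\sqrt m$. The $(1-\theta)$-block splits further into a $\nu$-independent piece, which matches $(1-\theta)P_mf/\sqrt m$ via the unit normalisations of $(|v|^2-3)/\sqrt 6$ and $(2I^{2/\delta}-\delta)/\sqrt{2\delta}$, and a $\nu$-proportional piece, whose off-diagonal contribution $\sum_{i<j}(\int v_iv_jf\sqrt m)v_iv_j$ is exactly $P_2f/\sqrt m$.

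The main algebraic obstacle is the identification of the $\nu$-proportional diagonal contribution with $\nu P_1f/\sqrt m$. Setting $c_{ii}=\int v_i^2f\sqrt m\,dvdI$ and $e=\sum_i c_{ii}$, this contribution comes out as
\[
\sum_{i=1}^3\frac{(v_i^2-1)c_{ii}}{2}-\frac{e(|v|^2-3)}{6},
\]
the second term being the leftover from redistributing $\frac{1-\nu}{3}|v|^2=\frac{1}{3}|v|^2-\frac{\nu}{3}|v|^2$ across the two blocks. One must then verify the trace-free rearrangement
\[
\sum_{i=1}^3\frac{(v_i^2-1)c_{ii}}{2}-\frac{e(|v|^2-3)}{6}=\sum_{i=1}^3\frac{(3c_{ii}-e)(3v_i^2-|v|^2)}{18},
\]
which is a short computation using $\sum_i c_{ii}=e$ and matches the right-hand side with $P_1f/\sqrt m$. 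Once this identity is in place, the decomposition $g'(0)=\{\theta P_pf+(1-\theta)[P_mf+\nu(P_1+P_2)f]\}\sqrt m$ follows directly, yielding the claim.
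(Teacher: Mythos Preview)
Your plan is correct and follows essentially the same route as the paper: the chain rule via $J(0)$ and Corollary~\ref{xx}, followed by the trace-free rearrangement of the $\nu$-diagonal piece (your identity with $c_{ii}$ and $e$ is exactly the content of the paper's computation around~(\ref{simplified})). The only difference is organisational: you exploit the $\theta/(1-\theta)$ split in $C'(0),D'(0)$ from the outset and treat each block separately, whereas the paper first assembles the full expression $I_1+\cdots+I_5$ and then disentangles it through Steps~I--III; your ordering is arguably cleaner but the algebra is the same.
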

\begin{proof}
By chain rule,
\begin{align}\label{g0}
\begin{split}
g^{\prime}(0)&=
A^{\prime}(0)\frac{\partial \mathcal{M}_{\nu,\theta}(0)}{\partial A}
+B^{\prime}(0)\frac{\partial \mathcal{M}_{\nu,\theta}(0)}{\partial B}
+C^{\prime}(0)\frac{\partial \mathcal{M}_{\nu,\theta}(0)}{\partial C}
+D^{\prime}(0)\frac{\partial \mathcal{M}_{\nu,\theta}(0)}{\partial D}\cr
&=\nabla_{(A,B,C,D)}\mathcal{M}_{\nu,\theta}(0)\cdot\big(A^{\prime}(0),B^{\prime}(0),C^{\prime}(0),D^{\prime}(0)\big)\cr
&=\left\{\nabla_{(\rho_{\eta},U_{\eta},\mathcal{T}_{\nu,\theta\eta},T_{\theta\eta})}\mathcal{M}_{\nu,\theta}(0)
J(0)\right\}
\big(A^{\prime}(0),B^{\prime}(0),C^{\prime}(0),D^{\prime}(0)\big)^{\top},
\end{split}
\end{align}
where $J(\eta)$ denotes the Jacobian matrix between the translational macroscopic fields given in Lemma \ref{matrix}.
Recalling Corollary \ref{immediate}  and Corollary \ref{xx}, we see that
\begin{align*}
&\nabla_{(\rho_{\eta},U_{\eta},\mathcal{T}_{\nu,\theta \eta},T_{\theta\eta})}\mathcal{M}_{\nu,\theta}(0)J(0)\cr
&\hspace{0.2cm}=\left(\,1,v_1,v_2,v_3,\frac{v_1^2-1}{2},\frac{v_2^2-1}{2},\frac{v_3^2-1}{2},
v_1v_2,v_2v_3,v_3v_1,\frac{2I^{\frac{2}{\delta}}-\delta}{2}\,\right)m\,J(0)\cr
&\hspace{0.2cm}=\left(1-\frac{|v|^2-3}{2}-\frac{2I^{\frac{2}{\delta}}-\delta}{2}
,v_1,v_2,v_3,\frac{v_1^2-1}{2},\frac{v_2^2-1}{2},\frac{v_3^2-1}{2},
v_1v_2,v_2v_3,v_3v_1,\frac{2I^{\frac{2}{\delta}}-\delta}{2}\right)m.
\end{align*}
On the other hand, we find
\begin{align*}
\left(
\begin{array}{c}
A^{\prime}(0)\cr
B^{\prime}(0)\cr
C^{\prime}(0)\cr
D^{\prime}(0)
\end{array}
\right)
=\left(
\begin{array}{c}
\int f\sqrt{m}dvdI\\
\int fv_1\sqrt{m}dvdI\\
\int fv_2\sqrt{m}dvdI\\
\int fv_3\sqrt{m}dvdI\\
\theta\int f\left(\frac{1}{3+\delta}|v|^2+\frac{2}{3+\delta}I^{\frac{2}{\delta}}\right)\sqrt{m}dvdI
+(1-\theta)\left\{\int f\left(\frac{1-\nu}{3}|v|^2+\nu v_1^2\right)\sqrt{m}dvdI\right\}\\
\theta\int f\left(\frac{1}{3+\delta}|v|^2+\frac{2}{3+\delta}I^{\frac{2}{\delta}}\right)
\sqrt{m}dvdI
+(1-\theta)\left\{\int f\left(\frac{1-\nu}{3}|v|^2+\nu v_1^2\right)\sqrt{m}dvdI\right\}\\
\theta\int f\left(\frac{1}{3+\delta}|v|^2+\frac{2}{3+\delta}I^{\frac{2}{\delta}}\right)\sqrt{m}
dvdI
+(1-\theta)\left\{\int f\left(\frac{1-\nu}{3}|v|^2+\nu v_1^2\right)\sqrt{m}dvdI\right\}\\
(1-\theta)\nu\int f v_1v_2\sqrt{m}dvdI\\
(1-\theta)\nu\int f v_2v_3\sqrt{m}dvdI\\
(1-\theta) \nu\int f v_3v_1\sqrt{m}dvdI\\
\theta\int f\left(\frac{1}{3+\delta}|v|^2+\frac{2}{3+\delta}I^{\frac{2}{\delta}}\right)\sqrt{m}dvdI
+(1-\theta)\left(\frac{2}{\delta}\int fI^{\frac{2}{\delta}}\sqrt{m}dvdI\right)
\end{array}\right)
\end{align*}
Inserting these identities into (\ref{g0})
\begin{align*}
g^{\prime}(0)m^{-1}
&=\left(\int f\sqrt{m}dvdI\right)\Big(1-\frac{|v|^2-3}{2}-\frac{2I^{\frac{2}{\delta}}-\delta}{2}\Big)\cr
&+\left(\int fv\sqrt{m}dvdI\right)\cdot v\cr
&+\sum_{i=1}^3\left[\theta\int f\left(\frac{1}{3+\delta}|v|^2+\frac{2}{3+\delta}I^{\frac{2}{\delta}}\right)\sqrt{m}dvdI\right.\cr
&\hspace{1.2cm}+\left.(1-\theta)\left\{\int f\left(\frac{1-\nu}{3}|v|^2+\nu v_i^2\right)\sqrt{m}dvdI\right\}\right]\left(\frac{v_i^2-1}{2}\right)\cr
&+(1-\theta) \nu\sum_{i<j}\left(\int f v_iv_j\sqrt{m}dvdI\right)v_iv_j\cr
&+\left[\theta\int f\left(\frac{1}{3+\delta}|v|^2+\frac{2}{3+\delta}I^{\frac{2}{\delta}}\right)\sqrt{m}dvdI
+(1-\theta)\left(\frac{2}{\delta}\int fI^{\frac{2}{\delta}}\sqrt{m}dvdI\right)\right]\left(\frac{2I^{\frac{2}{\delta}}-\delta}{2}\right)\cr
&\equiv I_1+I_2+I_3+I_4+I_5.
\end{align*}
For later computation, we further decompose $I_1$ and $I_5$ as follows:
\begin{align*}
&I^1_1=\int f\sqrt{m}dvdI,\,
I^2_1=-\left(\int f\sqrt{m}dvdI\right)\Big(\frac{|v|^2-3}{2}\Big),\cr
&\hspace{1cm}I^3_1=-\left(\int f\sqrt{m}dvdI\right)\Big(\frac{2I^{\frac{2}{\delta}}-\delta}{2}\Big),
\end{align*}
and
\begin{align*}
I^1_5&=\theta\left\{\int f\left(\frac{1}{3+\delta}|v|^2+\frac{2}{3+\delta}I^{\frac{2}{\delta}}\right)\sqrt{m}dvdI
\right\}\Big(\frac{2I^{\frac{2}{\delta}}-\delta}{2}\Big)\cr
I^2_5&=(1-\theta)\left(\frac{2}{\delta}\int fI^{\frac{2}{\delta}}\sqrt{m}dvdI\right)\Big(\frac{2I^{\frac{2}{\delta}}-\delta}{2}\Big).
\end{align*}
We now rearrange these terms so that (1) the polyatomic part and monatomic-like part are separated, and (2) the orthogonality between the components are clearly revealed, as is given in the statement of the Lemma \ref{Nilpotent} later. For this, we need some preliminary calculations:\newline

\noindent {\bf Step I: $I_3=A_1+A_2+A_3$}\newline First we compute the summation in $I_3$ to obtain
\begin{align}\label{I3 rewrite}
\begin{split}
I_3
&=\theta\left\{\int f\left(\frac{|v|^2+2I^{\frac{2}{\delta}}}{3+\delta}\right)\sqrt{m}dvdI\right\}\frac{|v|^2-3}{2}\cr
&+(1-\theta)\frac{1-\nu}{3}\left(\int f|v|^2\sqrt{m}dvdI\right)\frac{|v|^2-3}{2}\cr
&+(1-\theta)\nu\sum_{1\leq i\leq 3}\left(\int f\,v^2_i\sqrt{m}dvdI\right)\frac{v_i^2-1}{2}.
\end{split}
\end{align}
The last term  can be decomposed as
\begin{align}\label{simplified}
\begin{split}
\sum_{1\leq i\leq 3}\left(\int f\,v^2_idvdI\right)\frac{v_i^2-1}{2}
&=\sum_{1\leq i\leq 3}\left(\int f\left(\frac{3v^2_i-|v|^2}{3}\right)\sqrt{m}dvdI\right)\frac{3v_i^2-|v|^2}{6}\cr
&+\sum_{1\leq i\leq 3}\left\{\int f\left(\frac{3v_i^2-|v|^2}{3}\right)\sqrt{m}dvdI\right\}\frac{|v|^2-3}{6}\cr
&+\sum_{1\leq i\leq 3}\left\{\int f\left(\frac{|v|^2}{3}\right)\sqrt{m}dvdI\right\}\frac{3v^2_i-|v|^2}{6}\cr
&+\sum_{1\leq i\leq 3}\left\{\int f\left(\frac{|v|^2}{3}\right)\sqrt{m}dvdI\right\}\frac{|v|^2-3}{6}.
\end{split}
\end{align}
The second and third terms vanish due to
\[
\sum_{1\leq i\leq 3}\left\{3v^2_i-|v|^2\right\}=0,
\]
and the last term is
\begin{align*}
\left\{\int f|v|^2\sqrt{m}dvdI\right\}\frac{|v|^2-3}{6}.
\end{align*}
so that (\ref{simplified}) is reduced to
\begin{align*}
\sum_{1\leq i\leq 3}\left(\int fv^2_i\sqrt{m}dvdI\right)\frac{v_i^2-1}{2}
&=\sum_{1\leq i\leq 3}\left\{\int f\left(\frac{3v^2_i-|v|^2}{3}\right)\sqrt{m}dvdI\right\}\frac{3v_i^2-|v|^2}{6}\cr
&+\left(\int f|v|^2\sqrt{m}dvdI\right)\frac{|v|^2-3}{6}.
\end{align*}
We plug this into (\ref{I3 rewrite}),
\begin{align*}
I_3&=\theta\left\{\int f\left(\frac{|v|^2+2I^{\frac{2}{\delta}}}{3+\delta}\right)\sqrt{m}dvdI\right\}\frac{|v|^2-3}{2}\cr
&+(1-\theta)\frac{1-\nu}{3}\left(\int f \,|v|^2dvdI\right)\frac{|v|^2-3}{2}\cr
&+(1-\theta)\nu\sum_{1\leq i\leq 3}
\left\{\int f\Big(\frac{3v_i^2-|v|^2}{3}\Big)\sqrt{m}dvdI\right\}\frac{3v_1^2-|v|^2}{6}\cr
&+(1-\theta)\nu\left(\int f \,|v|^2dvdI\right)\frac{|v|^2-3}{6}.
\end{align*}
and put together the second and fourth terms to get
\begin{align*}
I_3
&=\theta\left\{\int f\left(\frac{|v|^2+2I^{\frac{2}{\delta}}}{3+\delta}\right)\sqrt{m}dvdI\right\}\frac{|v|^2-3}{2}\cr
&+(1-\theta)\left(\int f\frac{|v|^2}{3}dvdI\right)\frac{|v|^2-3}{2}\cr
&+\nu(1-\theta)\sum_{1\leq i\leq 3}
\left\{\int f\Big(\frac{3v_i^2-|v|^2}{3}\Big)\sqrt{m}dvdI\right\}\frac{3v_i^2-|v|^2}{6}\cr
&\equiv A^1+A^2+A^3.
\end{align*}
{\bf Step II :}  $\theta(I^2_1+I^3_1)+A^1+I^1_5$ \newline
 We combine the first term of $I_5$ with $A^1$:
\begin{align*}
A^1+I^1_5=
\theta\left\{\int f\left(\frac{|v|^2+2I^{\frac{2}{\delta}}}{3+\delta}\right)\sqrt{m}\,dvdI\right\}
\left(\frac{(|v|^2-3)+(2I^{2/\delta}-\delta)}{2}\right),
\end{align*}
Therefore, adding $\theta$ portion of the second, third term of $I_1$ to $I_5+A^1$, we obtain
\begin{align*}
&\theta\left(I^2_1+I^{3}_1\right)+A^1+I^{1}_5\cr
&\qquad=-\theta\left\{\int f\sqrt{m}\,dvdI\right\}
\left(\frac{(|v|^2-3)+(2I^{2/\delta}-\delta)}{2}\right)\cr
&\qquad+\theta\left\{\int f\left(\frac{|v|^2+2I^{\frac{2}{\delta}}}{3+\delta}\right)\sqrt{m}\,dvdI\right\}
\left(\frac{(|v|^2-3)+(2I^{2/\delta}-\delta)}{2}\right)\cr
&\qquad=\theta\left\{\int f\left(\frac{(|v|^2-3)+(2I^{\frac{2}{\delta}}-\delta)}{3+\delta}\right)\sqrt{m}\,dvdI\right\}
\left(\frac{(|v|^2-3)+(2I^{2/\delta}-\delta)}{2}\right).\cr
\end{align*}
{\bf Step III:} Now,  we rewrite $I_1+\cdots +I_5$ as
\begin{align*}
&I_1+I_2+I_3+I_4+I_5\cr
&\qquad=\big(I^1_1+I^2_1+I^3_1\big)+I_2+\big(A_1+A_2+A_3\big)+I_4+(I^1_5+I^2_5)\cr
&\qquad=I^1_1+I_2+\left\{\theta\big(I^2_1+I^3_1\big)+A_1+I^1_5\right\}+A_2+A_3+(1-\theta)\big(I^2_1+I^3_1\big)+I_4+I^2_5
\end{align*}
and insert the above computations in step I and step II, to derive
\begin{align*}
&I_1+\cdots+I_5\cr
&\qquad=\left(\int f\sqrt{m}dvdI\right) \cr
&\qquad+\left(\int fv\sqrt{m}dvdI\right)\cdot v\cr
&\qquad+\theta\left\{\int f\left(\frac{(|v|^2-3)+(2I^{\frac{2}{\delta}}-\delta)}{3+\delta}\right)\sqrt{m}\,dvdI\right\}
\left(\frac{(|v|^2-3)+(2I^{2/\delta}-\delta)}{2}\right)\cr
&\qquad+(1-\theta)\left(\int f\frac{|v|^2}{3}\sqrt{m}dvdI\right)\left(\frac{|v|^2-3}{2}\right)\cr
&\qquad+\nu(1-\theta)\sum_{1\leq i\leq 3}
\left\{\int f\Big(\frac{3v_i^2-|v|^2}{3}\Big)\sqrt{m}dvdI\right\}\frac{3v_1^2-|v|^2}{6}\cr
&\qquad-(1-\theta)\left\{\int f\sqrt{m}dvdI\right\} \left(\frac{|v|^2-3}{2}\right)\cr
&\qquad-(1-\theta)\left\{\int f\sqrt{m}dvdI\right\} \left(\frac{2I^{\frac{2}{\delta}}-\delta}{2}\right)\cr
&\qquad+(1-\theta) \nu\sum_{i< j}\left(\int f v_iv_j\sqrt{m} dvdI\right)v_iv_j\cr
&\qquad+(1-\theta)\left\{\frac{2}{\delta}\left(\int fI^{\frac{2}{\delta}} dvdI\right)\right\}\left(\frac{2I^{\frac{2}{\delta}}-\delta}{2}\right).
\end{align*}
Note that the 4th and the 6th terms on the r.h.s put together give
\begin{align*}
(1-\theta)\left(\int f\frac{|v|^2-3}{\sqrt{6}}dvdI\right)\frac{|v|^2-3}{\sqrt{6}}.
\end{align*}
Likewise, the 7th and the 9th term on the r.h.s can be combined to yield
\begin{align*}
(1-\theta)\left\{\int f\left(\frac{2I^{\frac{2}{\delta}}-\delta}{\sqrt{2\delta}}\right)\sqrt{m}dvdI\right\} \left(\frac{2I^{\frac{2}{\delta}}-\delta}{\sqrt{2\delta}}\right).
\end{align*}
In conclusion,
\begin{align*}
I_1+\cdots+I_5&=\left(\int f\sqrt{m}dvdI\right)  \cr
&+\left(\int fv\sqrt{m}dvdI\right)\cdot v\cr
&+\theta\left\{\int f\left(\frac{(|v|^2-3)+(2I^{\frac{2}{\delta}}-\delta)}{\sqrt{2(3+\delta)}}\right)\sqrt{m}\,dvdI\right\}
\left(\frac{(|v|^2-3)+(2I^{2/\delta}-\delta)}{\sqrt{2(3+\delta)}}\right)\cr
&+(1-\theta)\left\{\int f\left(\frac{|v|^2-3}{\sqrt{6}}\right)\sqrt{m}dvdI\right\}
\left(\frac{|v|^2-3}{\sqrt{6}}\right)\cr
&+(1-\theta)\left\{\int f\left(\frac{2I^{\frac{2}{\delta}}-\delta}{\sqrt{2\delta}}\right)\sqrt{m}dvdI\right\}
\left(\frac{2I^{\frac{2}{\delta}}-\delta}{\sqrt{2\delta}}\right)\cr
&+\nu(1-\theta)\sum_{i}
\left\{\int f\Big(\frac{3v_i^2-|v|^2}{3\sqrt{2}}\Big)\sqrt{m}dvdI\right\}\left(\frac{3v_i^2-|v|^2}{3\sqrt{2}}\right)\cr
&+\nu(1-\theta) \sum_{i< j}\left\{\int f v_iv_j\sqrt{m}dvdI\right\}
v_iv_j.
\end{align*}
Finally, we split the first two term as
\begin{align*}
&\left(\int f\sqrt{m}dvdI\right)  +\left(\int fv\sqrt{m}dvdI\right)\cdot v\cr
&\hspace{1.5cm}=
 \theta\left\{\left(\int f\sqrt{m}dvdI\right)  +\left(\int fv\sqrt{m}dvdI\right)\cdot v\right\}\cr
&\hspace{1.5cm}+(1-\theta)\left\{\left(\int f\sqrt{m}dvdI\right)  +\left(\int fv\sqrt{m}dvdI\right)\cdot v\right\},
\end{align*}
and gather terms with $\theta$ and $(1-\theta)$ separately, which
are $P_{p}f$ and $P_mf+\nu(P_1+P_2)f$ respectively.
\end{proof}

We now move on to the nonlinear term.
In the following, the polynomials $P^{\mathcal{M}}_{ij}$, $R^{\mathcal{M}}_{ij}$ are generically defined in the sense that their exact form may vary line after line
but can be explicitly computed in principle. Note that explicit form is not relevant as long as they satisfy the structural assumptions $\mathcal{H}_{\mathcal{M}}$ below.
%
%
%
%
%
\begin{proposition}\label{prop21}
$g^{\prime\prime}(\eta)$ is given by
\begin{eqnarray*}
g^{\prime\prime}(\eta)=\sum_{i,j}\bigg\{\int^1_0\frac{P^{\mathcal{M}}_{i,j}(\rho_{\eta},v-U_{\eta}, \mathcal{T}^{-1}_{\nu,\theta \eta}, I^{2/\delta}, T_{\theta\eta})}{R^{\mathcal{M}}_{i,j}(\rho_{\eta},\det\mathcal{T}_{\nu,\theta\eta},T_{\theta\eta})}
\mathcal{M}_{\nu,\theta}(\eta)(1-\eta)d\eta\bigg\}\langle f,e_i\rangle_{L^2_{v,I}}\langle f,e_j\rangle_{L^2_{v,I}},
\end{eqnarray*}
where $P^{\mathcal{M}}_{i,j}(x_1,\ldots,x_n)$ and $R^{\mathcal{M}}_{i,j}(x_1,\ldots, x_n)$ are generically defined polynomials satisfying the following structural assumptions $(\mathcal{H}_{\mathcal{M}})$:
\begin{itemize}
\item $(\mathcal{H}_{\mathcal{M}}1)$ $P^{\mathcal{M}}_{i,j}$ is a polynomial such that $P_{i,j}(0,0,\ldots, 0)=0.$
\item $(\mathcal{H}_{\mathcal{M}}2)$ $R^{\mathcal{M}}_{i,j}$ is a monomial.
\end{itemize}
and
\begin{align*}
e_1&=\sqrt{m},\\
e_{i+1}&=v_i\sqrt{m},\quad(i=1,2,3),\\
e_j&=\left\{\theta\left(\frac{1}{3+\delta}|v|^2+\frac{2}{3+\delta}I^{\frac{2}{\delta}}\right)
+(1-\theta)\left(\frac{1-\nu}{3}|v|^2+\nu v_i^2\right)\right\}\sqrt{m},\quad (j=5,6,7)\\
e_8&=\nu(1-\theta) v_1v_2\sqrt{m},\\
e_9&=\nu(1-\theta) v_2v_3\sqrt{m},\\
e_{10}&=\nu(1-\theta)  v_3v_1\sqrt{m},\\
e_{11}&=\left\{\theta\left(\frac{1}{3+\delta}|v|^2+\frac{2}{3+\delta}I^{\frac{2}{\delta}}\right)
+(1-\theta)\left(\frac{2}{\delta}I^{\frac{2}{\delta}}\right)\right\}\sqrt{m}.
\end{align*}
\end{proposition}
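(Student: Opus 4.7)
The plan is to compute $g''(\eta)$ directly via the chain rule and then repackage the result into the claimed rational form. The starting point is the observation that $\eta\mapsto(A,B,C,D)(\eta)$ is \emph{affine}: since $F_\eta=m+\eta f\sqrt m$, each of the eleven components $Y_k(\eta)$ listed in (\ref{TMF}) is the $m$-moment plus $\eta$ times an $\eta$-independent $f$-moment weighted by a polynomial in $(v,I^{2/\delta})$. Thus $Y_k''(\eta)\equiv 0$ and the chain rule collapses to
\begin{equation*}
g''(\eta)=\sum_{k,l=1}^{11}\frac{\partial^{2}\mathcal{M}_{\nu,\theta}(\eta)}{\partial Y_k\partial Y_l}\,Y_k'(\eta)\,Y_l'(\eta).
\end{equation*}
Reading off $Y_k'(\eta)$ from (\ref{TMF}) and matching term by term against the basis $e_1,\dots,e_{11}$ given in the statement yields $Y_k'(\eta)=\langle f,e_k\rangle_{L^{2}_{v,I}}$; these are the very same inner products that organized the proof of Theorem \ref{expansion}.

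The second step is to express the partials $\partial^{2}_{Y_kY_l}\mathcal{M}_{\nu,\theta}(\eta)$ in the desired polynomial-over-monomial form. I would pass to the macroscopic variables $X=(\rho_\eta,U_\eta,\mathcal{T}_{\nu,\theta\eta},T_{\theta\eta})$ through the Jacobian $J(\eta)=\partial X/\partial Y$ of Lemma \ref{matrix}, writing $\partial_{Y_k}=\sum_{a}J_{ak}\partial_{X_a}$, so that
\begin{equation*}
\partial^{2}_{Y_kY_l}\mathcal{M}_{\nu,\theta}(\eta)=\sum_{a,b}J_{ak}J_{bl}\,\partial^{2}_{X_aX_b}\mathcal{M}_{\nu,\theta}(\eta)+\sum_{a}\bigl(\partial_{Y_l}J_{ak}\bigr)\partial_{X_a}\mathcal{M}_{\nu,\theta}(\eta).
\end{equation*}
Lemma \ref{22} already displays each $\partial_{X_a}\mathcal{M}_{\nu,\theta}(\eta)$ as a polynomial in $(v-U_\eta,\mathcal{T}^{-1}_{\nu,\theta\eta},I^{2/\delta},T_{\theta\eta})$ divided by a monomial in $(\rho_\eta,\det\mathcal{T}_{\nu,\theta\eta},T_{\theta\eta})$, times $\mathcal{M}_{\nu,\theta}(\eta)$; one further differentiation preserves this class using (\ref{gives}) to handle $\mathcal{T}^{-1}$, and both $J_{ak}$ and $\partial_{Y_l}J_{ak}=\sum_{b}J_{bl}\partial_{X_b}J_{ak}$ sit in the same class by Lemma \ref{matrix}. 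Multiplying and summing produces the representation $\partial^{2}_{Y_kY_l}\mathcal{M}_{\nu,\theta}(\eta)=(P^{\mathcal M}_{k,l}/R^{\mathcal M}_{k,l})\mathcal{M}_{\nu,\theta}(\eta)$, and inserting this into the Taylor remainder (\ref{Taylor Expansion}) and swapping the finite sum with the $\eta$-integral gives the formula of the proposition.

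The step requiring genuine care is the verification of the structural assumption $(\mathcal H_{\mathcal M}1)$, i.e.\ $P^{\mathcal M}_{k,l}(0,\ldots,0)=0$. The critical point is that $\mathcal{M}_{\nu,\theta}(\eta)$ is linear in $\rho_\eta$, so $\partial^{2}_{\rho\rho}\mathcal{M}_{\nu,\theta}(\eta)\equiv 0$ and no purely density-derivative contribution survives. Every other $X_a$-derivative of the Gaussian necessarily pulls down a factor in $\{v-U_\eta,\mathcal{T}^{-1},I^{2/\delta},T_{\theta\eta}\}$: $\nabla_{U_\eta}\mathcal{M}$ contributes $\mathcal T^{-1}(v-U_\eta)$, $\partial_{\mathcal T_{ij}}\mathcal{M}$ contributes either an entry of $\mathcal T^{-1}$ or a quadratic in $\mathcal T^{-1}(v-U_\eta)$ (after rewriting $\partial_{\mathcal T_{ij}}(\det\mathcal T)/\det\mathcal T$ as an entry of $\mathcal T^{-1}$), and $\partial_{T_{\theta\eta}}\mathcal{M}$ contributes $2I^{2/\delta}-\delta T_{\theta\eta}$. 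A case analysis over the index pairs $(k,l)$, grouped according to $Y_k\in\{A\}\cup\{B_i\}\cup\{C_{ij}\}\cup\{D\}$ and using Corollary \ref{immediate} at the end to eliminate any stray constants, confirms that at least one such vanishing factor is present in every surviving term of $P^{\mathcal M}_{k,l}$. The main obstacle is therefore purely combinatorial bookkeeping rather than any deeper difficulty—the quadratic dependence on $f$ that the subsequent nonlinear estimates actually require is already guaranteed by the outer factor $\langle f,e_k\rangle\langle f,e_l\rangle$.
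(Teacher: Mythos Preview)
Your approach is correct and essentially identical to the paper's: both exploit the affinity of $Y(\eta)$ to kill $Y''$, pass through the Jacobian $J(\eta)$ of Lemma~\ref{matrix}, and split $g''(\eta)$ into a Hessian-in-$X$ term and a $J'$-term, then invoke Lemma~\ref{22} to read off the rational structure. Your treatment of $(\mathcal H_{\mathcal M}1)$---in particular the observation that $\partial^2_{\rho\rho}\mathcal M=0$ and the rewriting of $\partial_{\mathcal T_{ij}}(\det\mathcal T)/\det\mathcal T$ as an entry of $\mathcal T^{-1}$---is in fact more explicit than the paper's, which simply asserts the structural form can be verified.
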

\begin{proof}
For a matrix $A$, let $A_k$ and $A_{k\ell}$ denote the $k$-th column of $A$ and the
$k\ell$ element respectively. For simplicity, we set
\begin{align*}
X_{\eta}&=\big(\rho_{\eta},U_{\eta},\mathcal{T}_{\nu,\theta\eta},T_{\theta\eta}\big),\cr
Y(\eta)&=\big(A(\eta), B(\eta), C(\eta), D(\eta)\big).
\end{align*}
Observe that each component of $Y(\eta)$ takes the form
\[
\int F_{\eta}P(v,I)dvdI=\int (m+\eta\sqrt{m}f)P(v,I)dvdI
\]
for some polynomial $P$. Therefore, $Y^{\prime}(\eta)$ does not depend on $\eta$, and we can write
\[
Y^{\prime}(\eta)=Y^{\prime}(0).
\]
Hence,  applying the chain rule, we compute
\begin{align*}
g^{\prime}(\eta)&=
A^{\prime}(0)\frac{\partial \mathcal{M}_{\nu,\theta}(\eta)}{\partial A}
+B^{\prime}(0)\frac{\partial \mathcal{M}_{\nu,\theta}(\eta)}{\partial B}
+C^{\prime}(0)\frac{\partial \mathcal{M}_{\nu,\theta}(\eta)}{\partial C}
+D^{\prime}(0)\frac{\partial \mathcal{M}_{\nu,\theta}(\eta)}{\partial D}\cr
&=\nabla_{(A,B,C,D)}\mathcal{M}_{\nu,\theta}(\eta)\cdot\big(A^{\prime}(0),B^{\prime}(0),C^{\prime}(0),D^{\prime}(0)\big)\cr
&=\nabla_{(\rho_{\eta},U_{\eta},\mathcal{T}_{\nu,\theta\eta},T_{\theta\eta})}\mathcal{M}_{\nu,\theta}(\eta)
J(\eta)Y^{\prime}(0)^{\top}\cr
&=\sum_i\left\{\nabla_{(\rho_{\eta},U_{\eta},\mathcal{T}_{\nu,\theta\eta},T_{\theta\eta})}\mathcal{M}_{\nu,\theta}(\eta)
\cdot J_i(\eta)\right\}Y^{\prime}_i(0),
\end{align*}
Taking the derivative again,
\begin{align*}
g^{\prime\prime}(\eta)&=\sum_i\left\{\Big(\nabla_{(\rho_{\eta},U_{\eta},\mathcal{T}_{\nu,\theta\eta},T_{\theta\eta})}\mathcal{M}_{\nu,\theta}(\eta)
\Big)^{\prime}\cdot J_i(\eta)\right\}Y_i^{\prime}(0)\cr
&+\sum_i\left\{\nabla_{(\rho_{\eta},U_{\eta},\mathcal{T}_{\nu,\theta\eta},T_{\theta\eta})}\mathcal{M}_{\nu,\theta}(\eta)
\cdot \big(J_i(\eta)\big)^{\prime}\right\}Y^{\prime}_i(0)\cr
&=I+II.
\end{align*}
Now, since we have
\begin{align}\label{now}
\begin{split}
\Big(\nabla_{(\rho_{\eta},U_{\eta},\mathcal{T}_{\nu,\theta\eta},T_{\theta\eta})}\mathcal{M}_{\nu,\theta}(\eta)
\Big)^{\prime}
&=
\nabla_{X_{\eta}} \left\{\nabla_{X_{\eta}} \mathcal{M}_{\nu,\theta}(\eta)\right\}
J(\eta)\big\{Y^{\prime}(\eta)\big\}^{\top}\cr
&=
\nabla_{X_{\eta}} \left\{\nabla_{X_{\eta}} \mathcal{M}_{\nu,\theta}(\eta)\right\}
J(\eta)\big\{Y^{\prime}(0)\big\}^{\top}\cr
&=(a_1,\cdots,a_{11}),
\end{split}
\end{align}
where
\begin{align*}
a_{k}&=\Big\{\nabla_{X_{\eta}} \left\{\nabla_{X_{\eta}} \mathcal{M}_{\nu,\theta}(\eta)\right\}
J(\eta)\Big\}_k \cdot Y^{\prime}(0)\cr
&=\sum_{\ell}\Big\{\nabla_{X_{\eta}} \left\{\nabla_{X_{\eta}} \mathcal{M}_{\nu,\theta}(\eta)\right\}
J(\eta)\Big\}_{k\ell}  Y^{\prime}_{\ell}(0),
\end{align*}
and
\begin{align}\label{where}
\begin{split}
J^{\prime}_i(\eta)&=\nabla_{X_{\eta}} J_i(\eta)J(\eta)\left\{Y^{\prime}(\eta)\right\}^{\top}\cr
&=\nabla_{X_{\eta}} J_i(\eta)J(\eta)Y^{\prime}(0)^{\top}\cr
&=\sum_j\nabla_{X_{\eta}} J_i(\eta)\cdot J_j(\eta) Y_j(0),
\end{split}
\end{align}
we can derive the following expression for $I$:
\begin{align*}
I&=\sum_i\left\{\Big(\sum_k\sum_{\ell}\Big\{\nabla_{X_{\eta}} \left\{\nabla_{X_{\eta}} \mathcal{M}_{\nu,\theta}(\eta)\right\}
J(\eta)\Big\}_{k\ell}  Y^{\prime}_{\ell}(0)
\Big) J_{ki}(\eta)\right\}Y_i^{\prime}(0)\cr
&=\sum_{i,k,\ell}A_{k\ell}J_{ki}(\eta)Y^{\prime}_{\ell}(0)Y^{\prime}_{i}(0)\cr
&=\sum_{i,k,\ell}A_{k\ell}J_{ki}(\eta)\langle f,e_{\ell}\rangle_{L^2_{v,I}}\langle f,e_i\rangle_{L^2_{v,I}}
\end{align*}
with
\[
A_{k\ell}=\Big\{\nabla_{X_{\eta}} \left\{\nabla_{X_{\eta}} \mathcal{M}_{\nu,\theta}(\eta)\right\}
J(\eta)\Big\}_{k\ell}.
\]
In view of Lemma \ref{matrix}  and Lemma \ref{22}, it can be easily verified that $A_{k\ell}J_{ki}(\eta)$ takes the following form:
\begin{align*}
\frac{P^{\mathcal{M}}_{i,j}(\rho_{\eta},U_{\eta}, v-U_{\eta}, \mathcal{T}^{-1}_{\nu,\theta \eta},I^{2/\delta}, T_{\theta\eta})}{R^{\mathcal{M}}_{i,j}(\rho_{\eta},\det\mathcal{T}_{\nu,\theta\eta},T_{\theta\eta})}
\mathcal{M}_{\nu,\theta}(\eta)
\end{align*}
for some polynomials $P^{\mathcal{M}}_{i,j}$, $R^{\mathcal{M}}_{i,j}$ satisfying the structural assumptions.
$II$ can be treated in a similar manner.
\end{proof}
%
%
%
%
%
%
%
Finally, we consider the linearization of the collision frequency.
%
%
%
%
\begin{lemma}\label{24}The collision frequency $A_{\nu,\theta}$ can be linearized around the normalized global Maxwellian as follows:
\begin{align*}
A_{\nu,\theta}&=\frac{1}{1-\nu+\theta\nu}\bigg\{1+\sum_{2\leq i\leq 7, 11}a_i\langle f,e_i\rangle_{L^2_{v,I}}\bigg\},
\end{align*}
where
\begin{align*}
a_1&=-\frac{2}{3+\delta}\int^1_0|U_{\eta}|^2d\eta,\quad
a_{i}=\frac{4}{3+\delta}\int^1_0U_{\eta,i}d\eta. \quad(i=2,3,4)\cr
a_{i}&=\frac{1}{3+\delta} \quad(i=5,6,7),\quad
a_{11}=\frac{\delta}{3+\delta}.
\end{align*}
\end{lemma}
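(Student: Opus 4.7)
The plan is to compute $\rho T_\delta$ directly and re-express the result against the basis $\{e_i\}$. Since $\alpha=\beta=1$, one has $A_{\nu,\theta} = \rho T_\delta/(1-\nu+\theta\nu)$, so the task reduces to expanding $\rho T_\delta$ around its equilibrium value $1$. The energy decomposition $E = E_{kin}+E_\delta$ together with $E_\delta = \frac{3+\delta}{2}\rho T_\delta$ gives $\rho T_\delta = (2E - \rho|U|^2)/(3+\delta)$. Plugging in $F = m + \sqrt{m}f$ and using the normalization $\int m(\frac{|v|^2}{2} + I^{2/\delta})\,dvdI = \frac{3+\delta}{2}$ yields $2E = 3+\delta + \int f\sqrt{m}(|v|^2 + 2I^{2/\delta})\,dvdI$, which is linear in $f$.

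The main obstacle is the nonlinear quantity $\rho|U|^2 = |\rho U|^2/\rho$, which must be converted into $L^2_{v,I}$ inner products of $f$ against simple polynomials, with coefficients depending at most on the transitional fields. The key idea is to apply the fundamental theorem of calculus along the transitional path $F_\eta = m + \eta\sqrt{m}f$: since $\rho_0|U_0|^2 = 0$, one has $\rho|U|^2 = \int_0^1 \frac{d}{d\eta}(\rho_\eta|U_\eta|^2)\,d\eta$. The crucial simplification is that $A(\eta)$ and $B(\eta)$ are affine in $\eta$, so $A'(\eta) = \int f\sqrt{m}\,dvdI$ and $B'(\eta) = \int vf\sqrt{m}\,dvdI$ are constant in $\eta$. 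A short chain-rule computation on $|B|^2/A$ then yields $\frac{d}{d\eta}(\rho_\eta|U_\eta|^2) = 2\,U_\eta\cdot B'(0) - |U_\eta|^2\,A'(0)$, and integrating in $\eta$ expresses $\rho|U|^2$ as a sum of two terms: one proportional to $\langle f, v\sqrt{m}\rangle$ with weight $\int_0^1 U_\eta\,d\eta$, and one proportional to $\langle f, \sqrt{m}\rangle$ with weight $\int_0^1 |U_\eta|^2\,d\eta$. These are exactly the contributions giving rise to $a_1,a_2,a_3,a_4$.

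The last step is algebraic bookkeeping: identifying the purely linear piece $\frac{1}{3+\delta}\int f\sqrt{m}(|v|^2+2I^{2/\delta})\,dvdI$ with $\sum_{i=5}^{7} a_i\langle f,e_i\rangle + a_{11}\langle f,e_{11}\rangle$. One checks directly that
\begin{equation*}
\sum_{i=5}^{7} a_i\, e_i + a_{11}\, e_{11} = \frac{|v|^2 + 2I^{2/\delta}}{3+\delta}\sqrt{m},
\end{equation*}
which collapses thanks to $\tfrac{3}{3+\delta}+\tfrac{\delta}{3+\delta}=1$ and $3\cdot\tfrac{1-\nu}{3}+\nu=1$, reassembling both the $\theta$- and $(1-\theta)$-weighted pieces into $|v|^2 + 2I^{2/\delta}$. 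The hard part is precisely this bookkeeping: verifying that the rather intricate $\theta$- and $\nu$-dependent polynomials defining $e_5,\dots,e_7,e_{11}$ conspire under the given weights $a_i$ to reproduce the clean macroscopic expression. Once this collapse is verified, dividing by $1-\nu+\theta\nu$ yields the claimed formula.
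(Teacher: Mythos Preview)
Your proposal is correct and follows essentially the same route as the paper: both compute $\rho T_\delta$ directly, split off the linear-in-$f$ piece $\frac{1}{3+\delta}\int f\sqrt{m}(|v|^2+2I^{2/\delta})\,dvdI$, handle the nonlinear term $\rho|U|^2=|B|^2/A$ via the transitional path $F_\eta$ and the fundamental theorem of calculus (exploiting that $A'(\eta),B'(\eta)$ are constant), and then verify the algebraic identity $e_5+e_6+e_7+\delta e_{11}=(|v|^2+2I^{2/\delta})\sqrt{m}$. The only cosmetic difference is that you reach $\rho T_\delta=(2E-\rho|U|^2)/(3+\delta)$ through the energy decomposition, whereas the paper starts from the integral formula for $T_\delta$; these are the same computation.
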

\begin{proof}
We compute
\begin{align}\label{second}
\begin{split}
\rho T_{\delta}&=\int_{\mathbb{R}^3\times \mathbb{R}_+}F
\left(\frac{1}{3+\delta}|v-U|^2+\frac{2}{3+\delta}I^{2/\delta}\right)dvdI\cr
&=\int_{\mathbb{R}^3\times\mathbb{R}_+}(m+\sqrt{m}f)\left(\frac{1}{3+\delta}|v|^2+\frac{2}{3+\delta}I^{2/\delta}\right)dvdI+\frac{2}{3+\delta}\rho |U|^2\cr
&=1+\int_{\mathbb{R}^3\times\mathbb{R}_+}\sqrt{m}f\left(\frac{1}{3+\delta}|v|^2+\frac{2}{3+\delta}I^{2/\delta}\right)dvdI+\frac{2}{3+\delta}\rho |U|^2.
\end{split}
\end{align}
Then, observe
\[
|v|^2+2I^{2/\delta}=e_5+e_6+e_7+\delta e_{11}
\]
to write the second term in the last line of (\ref{second}) as
\[
\int_{\mathbb{R}^3\times\mathbb{R}_+}\sqrt{m}f\left(\frac{1}{3+\delta}|v|^2+\frac{2}{3+\delta}I^{2/\delta}\right)dvdI
=\frac{1}{3+\delta}\sum_{i=5,6,7}\langle f,e_i\rangle_{L^2_{v,I}}+\frac{\delta}{3+\delta}\langle f,e_{11}\rangle_{L^2_{v,I}}
\]
For the third term, we define
\begin{align*}
R(\eta)=\rho_{\eta}|U_{\eta}|^2 =\frac{|B(\eta)|^2}{A(\eta)}.
\end{align*}
Note that $R(1)=\rho |U|^2$ and $R(0)=0$.
Therefore, applying Taylor's theorem and the chain rule with
\[
A^{\prime}(\eta)=A^{\prime}(0),\quad B^{\prime}(\eta)=B^{\prime}(0),
\]
yields
\begin{align*}
R(1)&=R(0)+\int^1_0R^{\prime}(\eta)d\eta\cr
&=\int^1_0\left\{\frac{\partial R}{\partial A}A^{\prime}(0)+\frac{\partial R}{\partial B}B^{\prime}(0)\right\}d\eta\cr
&=-\left(\int^1_0\frac{|B(\eta)|^2}{|A(\eta)|^2}d\eta\right)A^{\prime}(0)
+\left(\int^1_0\frac{2B(\eta)}{A(\eta)}d\eta\right)\cdot B^{\prime}(0)\cr
&=-\left(\int^1_0|U_{\eta}|^2d\eta\right)\langle f,e_1\rangle_{L^2_{v,I}}
+\sum_{i=2,3,4}\left(\int^1_02U_{\eta,i}d\eta\right) \langle f,e_i\rangle_{L^2_{v,I}}
\end{align*}
This completes the proof.
\end{proof}
%
%
%
\subsection{Linearized polyatomic BGK model:}
Now we finish our linearization process.
To further simplify the presentation of the linearized relaxation operator, we denote
\begin{align*}
\mathcal{Q}^{\mathcal{M}}_{ij}(f)&=\frac{1}{\sqrt{m}}\int^1_0\frac{P^{\mathcal{M}}_{i,j}(\rho_{\eta}, v-U_{\eta}, \mathcal{T}^{-1}_{\nu,\theta \eta}
, I^{2/\delta},T_{\theta\eta})}{R^{\mathcal{M}}_{ij}(\rho_{\eta},\det\mathcal{T}_{\nu,\theta \eta},T_{\theta\eta})}
\mathcal{M}_{\eta}(\eta)
(1-\eta)d\eta,\cr
\mathcal{Q}^{A}_{i}(f)&=\left\{
\begin{array}{cl}
-\frac{2}{3+\delta}\int^1_0|U_{\eta}|^2d\eta  &  (i=1)\\
\frac{4}{3+\delta}\int^1_0U_{\eta,i}d\eta
& (i=2,3,4)\\
\frac{1}{3+\delta}, & (i=5,6,7) \\
\frac{\delta}{3+\delta}, & (i=11) \\
                         0 & \hbox{\mbox{otherwise}}
\end{array}
\right.\\
\end{align*}
so that we can represent the linearized operators more succinctly as
\begin{equation*}
\mathcal{M}_{\nu,\theta}(F)-F=\big(P_{\nu,\theta}f-f\big)\sqrt{m}+\sum_{ij} \mathcal{Q}^{\mathcal{M}}_{ij}\langle f,e_i\rangle_{L^2_{v,I}}\langle f,e_j\rangle_{L^2_{v,I}}\sqrt{m},
\end{equation*}
and
\begin{equation*}
A_{\nu,\theta}=\frac{1}{1-\nu+\theta\nu}\left\{1+\sum_{i}\mathcal{Q}^{A}_i\langle f,e_{i}\rangle_{L^2_{v,I}}\right\}.
\end{equation*}
We summarize all the computations of this section in the following proposition.
\begin{proposition}\label{Mf-f} The polyatomic relaxation operator can be linearized around the global polyatomic Maxwellian $m$ as follows
\begin{align*}
A_{\nu,\theta}\big(\mathcal{M}_{\nu,\theta}(F)-F\big)&=\frac{1}{1-\nu+\theta\nu}\Big(1+\sum_{i}\mathcal{Q}^{A}_{i}
\langle f,e_i\rangle_{L^2_{v,I}}\Big)\cr
&\times\Big\{\big(P_{\nu,\theta}f-f\big)+\sum_{i,j}\mathcal{Q}^{\mathcal{M}}_{ij}\langle f,e_i\rangle_{L^2_{v,I}}\langle f,e_j\rangle_{L^2_{v,I}}\Big\}\sqrt{m},
\end{align*}
where
\[
P_{\nu,\theta}f=\theta P_{p}f+(1-\theta)\{P_mf+\nu(P_1f+P_2f)\}.
\]
\end{proposition}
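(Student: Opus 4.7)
The proof is essentially an assembly of the linearizations already carried out piece by piece in this section, so the plan is to stitch them together cleanly rather than to produce any new estimate. The main work lies in keeping track of where each factor comes from.

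First, I would start from the perturbation ansatz $F=m+\sqrt{m}f$ to write
\begin{equation*}
\mathcal{M}_{\nu,\theta}(F)-F=\bigl(\mathcal{M}_{\nu,\theta}(F)-m\bigr)-\sqrt{m}\,f,
\end{equation*}
and then apply the Taylor expansion (\ref{Taylor Expansion}) to the transitional function $g(\eta)=\mathcal{M}_{\nu,\theta}(A(\eta),B(\eta),C(\eta),D(\eta))$. Since $g(0)=m$ and $g(1)=\mathcal{M}_{\nu,\theta}(F)$, this yields
\begin{equation*}
\mathcal{M}_{\nu,\theta}(F)-m=g'(0)+\int_0^1 g''(\eta)(1-\eta)\,d\eta.
\end{equation*}

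Next, I would substitute Theorem \ref{expansion} for the first term, which gives $g'(0)=(P_{\nu,\theta}f)\sqrt{m}$ with the decomposition $P_{\nu,\theta}=\theta P_p+(1-\theta)\{P_m+\nu(P_1+P_2)\}$, and Proposition \ref{prop21} for the integral remainder, which, after factoring out $\sqrt{m}$ via the definition of $\mathcal{Q}^{\mathcal{M}}_{ij}$, produces
\begin{equation*}
\int_0^1 g''(\eta)(1-\eta)\,d\eta=\sum_{i,j}\mathcal{Q}^{\mathcal{M}}_{ij}(f)\,\langle f,e_i\rangle_{L^2_{v,I}}\langle f,e_j\rangle_{L^2_{v,I}}\sqrt{m}.
\end{equation*}
Combining these two formulas and subtracting $\sqrt{m}\,f$ gives
\begin{equation*}
\mathcal{M}_{\nu,\theta}(F)-F=\Bigl\{(P_{\nu,\theta}f-f)+\sum_{i,j}\mathcal{Q}^{\mathcal{M}}_{ij}(f)\langle f,e_i\rangle_{L^2_{v,I}}\langle f,e_j\rangle_{L^2_{v,I}}\Bigr\}\sqrt{m}.
\end{equation*}

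Finally, I would multiply by $A_{\nu,\theta}$ as linearized in Lemma \ref{24}, namely
\begin{equation*}
A_{\nu,\theta}=\frac{1}{1-\nu+\theta\nu}\Bigl\{1+\sum_i\mathcal{Q}^A_i(f)\,\langle f,e_i\rangle_{L^2_{v,I}}\Bigr\},
\end{equation*}
and read off the claimed identity directly. The only step requiring mild care is verifying that the auxiliary functions $\mathcal{Q}^{\mathcal{M}}_{ij}$ in Proposition \ref{prop21} really do absorb the $1/\sqrt{m}$ factor without destroying the structural assumptions $(\mathcal{H}_{\mathcal{M}}1)$–$(\mathcal{H}_{\mathcal{M}}2)$, but this is immediate from their definition. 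The main potential obstacle, therefore, is not analytic but notational: one must keep the indexing of $\{e_i\}_{i=1}^{11}$ consistent between Theorem \ref{expansion}, Proposition \ref{prop21} and Lemma \ref{24}, and one must cleanly separate the contributions with factors $\theta$ versus $(1-\theta)$ so that the final grouping into $\theta P_p+(1-\theta)\{P_m+\nu(P_1+P_2)\}$ is transparent.
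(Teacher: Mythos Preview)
Your proposal is correct and matches the paper's approach exactly: the proposition is presented in the paper as a summary of the preceding computations, and the proof consists precisely of assembling Theorem \ref{expansion}, Proposition \ref{prop21}, and Lemma \ref{24} via the Taylor expansion (\ref{Taylor Expansion}) and then multiplying the two resulting factors. There is nothing to add.
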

We now substitute $F=m+\sqrt{m}f$ into (1.1) and apply Proposition \ref{Mf-f} to obtain the perturbed polyatomic BGK model:
\begin{eqnarray}\label{LBGK}
\begin{split}
\partial_t f+v\cdot\nabla_x f&=L_{\nu,\theta}f+\Gamma_{\nu,\theta}(f),\cr
f(0,x,v,I)&=f_0(x,v,I),
\end{split}
\end{eqnarray}
where
\[
f_0(x,v,I)=\frac{F_0(x,v,I)-m(v,I)}{\sqrt{m(v,I)}}.
\]
The linearized relaxation operator $L_{\nu,\theta}$ is defined
as follows:
\begin{align*}
L_{\nu,\theta}f&=\frac{1}{1-\nu+\theta\nu}\left\{P_{\nu,\theta}f-f\right\},
\end{align*}
where the precise form of the polyatomic projection $P_{\nu,\theta}$ is stated in Theorem \ref{expansion},
 and the nonlinear perturbation $\Gamma_{\nu,\theta}(f)$ is given by
\begin{align*}
\Gamma_{\nu,\theta}(f)&=\sum_{i}\mathcal{Q}^{A}_{i}\langle f,e_i\rangle_{L^2_{v,I}}\left\{P_{\nu,\theta}f-f\right\}\cr
&+\sum_{i,j }\mathcal{Q}^{\mathcal{M}}_{i,j}\langle f,e_i\rangle_{L^2_{v,I}}\langle f,e_j\rangle_{L^2_{v,I}}\cr
&+\sum_{i,j,k}\mathcal{Q}^{A}_i\mathcal{Q}^{\mathcal{M}}_{j,k}\langle f,e_i\rangle_{L^2_{v,I}}
\langle f,e_j\rangle_{L^2_{v,I}}
\langle f,e_k\rangle_{L^2_{v,I}}\cr
&\equiv\Gamma_1(f,f)+\Gamma_{2}(f,f)+\Gamma_3(f,f,f).
\end{align*}

The conservation laws in (\ref{ConservationLawsF}) now take the following form:
\begin{eqnarray}\label{ConservationLawsf0}
\begin{split}
\int f(t)\sqrt{m} ~dxdvdI&=\int f_0\sqrt{m} ~dxdvdI,\cr
\int f(t)v\sqrt{m} ~dxdvdI&=\int f_0v\sqrt{m} ~dxdvdI,\cr
\int f(t)\left\{\frac{1}{2}|v|^2+I^{2/\delta}\right\}\sqrt{m} ~dxdvdI&
=\int f_0\left\{\frac{1}{2}|v|^2+I^{2/\delta}\right\}\sqrt{m} ~dxdvdI.
\end{split}
\end{eqnarray}
Therefore, if initial data shares the same mass, momentum and energy with $m$, the conservation laws read
\begin{eqnarray}\label{ConservationLawsf}
\begin{split}
\int_{\mathbb{T}^3_x\times\mathbb{R}^3_v\times\mathbb{R}^+_I}f(x,v,t)\sqrt{m} ~dxdvdI&=0,\cr
\int_{\mathbb{T}^3_x\times\mathbb{R}^3_v\times\mathbb{R}^+_I}f(x,v,t)v\sqrt{m} ~dxdvdI&=0,\cr
\int_{\mathbb{T}^3_x\times\mathbb{R}^3_v\times\mathbb{R}^+_I}f(x,v,t)\left\{\frac{1}{2}|v|^2+I^{2/\delta}\right\}\sqrt{m} ~dxdvdI&=0.
\end{split}
\end{eqnarray}
%
%

\section{Coercivity of the linearized relaxation operator}
The main goal of this section to establish the following  dissipative property of the linearized polyatomic relaxation operator. Note that the coefficient and the degeneracy in the right hand side see an abrupt jump as $\theta$ reaches 0.
\begin{theorem}\label{degenerate coercivity} Let  $-1/2<\nu<1$ and $0\leq \theta\leq 1$. Then we have the following dichotomy.
\begin{enumerate}
\item For $0< \theta\leq1$, $L_{\nu,\theta}$ satisfies
\begin{align*}
(1-\nu+\theta\nu)\langle L_{\nu,\theta}f,f\rangle_{L^2_{x,v,I}}
&\leq -\theta\|(I-P_p)f\|^2_{L^2_{x,v,I}}.
\end{align*}
\item If $\theta=0$, $L_{\nu,0}$ satisfies
\begin{align*}
(1-\nu)\langle L_{\nu,0}f,f\rangle_{L^2_{x,v,I}}
&\leq -\left(1-|\nu|\,\right)\|(I-P_m)f\|^2_{L^2_{x,v,I}}.
\end{align*}
\end{enumerate}
\end{theorem}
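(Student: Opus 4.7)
The plan is to reduce everything to the self-adjoint orthogonal projection structure of $P_p$, $P_m$, $P_1$, $P_2$ exhibited in Theorem \ref{expansion}, and then do a short sign analysis in $\nu$. By definition of $L_{\nu,\theta}$ we have the identity
\begin{equation*}
(1-\nu+\theta\nu)\langle L_{\nu,\theta}f,f\rangle_{L^2_{v,I}}=\langle P_{\nu,\theta}f,f\rangle_{L^2_{v,I}}-\|f\|^2_{L^2_{v,I}},
\end{equation*}
so once each of $P_p$, $P_m$, $P_1$, $P_2$ is known to be a self-adjoint orthogonal projection onto the span of the basis vectors listed in Theorem \ref{expansion}, we can rewrite
\begin{equation*}
\langle P_{\nu,\theta}f,f\rangle_{L^2_{v,I}}=\theta\|P_pf\|^2_{L^2_{v,I}}+(1-\theta)\bigl(\|P_mf\|^2_{L^2_{v,I}}+\nu\|P_1f\|^2_{L^2_{v,I}}+\nu\|P_2f\|^2_{L^2_{v,I}}\bigr).
\end{equation*}
The preliminary step is therefore to verify the orthonormality claims hidden inside Theorem \ref{expansion}: compute the low-order Gaussian moments in $v$ together with the $I^{2k/\delta}$ moments against $\Lambda_\delta e^{-I^{2/\delta}}$, check that the normalizing factors $1/\sqrt 6$, $1/\sqrt{2\delta}$, and $1/\sqrt{2(3+\delta)}$ produce unit vectors in $L^2_{v,I}$, and confirm that $\mathrm{range}(P_1)$, $\mathrm{range}(P_2)$ sit inside $\ker(P_m)$ and are mutually orthogonal. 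The only slightly delicate point is that $\sum_i(3v_i^2-|v|^2)=0$, so $P_1$ really projects onto a two-dimensional subspace, but this does not affect the inequalities that follow.

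Granting those facts, case (1) comes from splitting off the $P_p$ piece:
\begin{equation*}
(1-\nu+\theta\nu)\langle L_{\nu,\theta}f,f\rangle_{L^2_{v,I}}=-\theta\|(I-P_p)f\|^2_{L^2_{v,I}}+(1-\theta)\bigl[\|P_mf\|^2+\nu\|P_1f\|^2+\nu\|P_2f\|^2-\|f\|^2\bigr].
\end{equation*}
Because $P_m$, $P_1$, $P_2$ project onto mutually orthogonal subspaces, $\|P_mf\|^2+\|P_1f\|^2+\|P_2f\|^2\le\|f\|^2$, so for $\nu\ge 0$ one bounds the bracket above by $\|P_mf\|^2+\|P_1f\|^2+\|P_2f\|^2-\|f\|^2\le 0$, while for $\nu<0$ the negative $\nu$-terms are simply discarded and the bound $\|P_mf\|^2\le\|f\|^2$ suffices. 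Integrating in $x\in\mathbb T^3$ upgrades the inequality to the stated $L^2_{x,v,I}$ form.

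Case (2) is parallel: setting $\theta=0$ gives
\begin{equation*}
(1-\nu)\langle L_{\nu,0}f,f\rangle_{L^2_{v,I}}=-\|(I-P_m)f\|^2_{L^2_{v,I}}+\nu\bigl(\|P_1f\|^2_{L^2_{v,I}}+\|P_2f\|^2_{L^2_{v,I}}\bigr).
\end{equation*}
Since $\mathrm{range}(P_1)$ and $\mathrm{range}(P_2)$ are orthogonal subspaces of $\ker(P_m)$, we have $\|P_1f\|^2+\|P_2f\|^2\le\|(I-P_m)f\|^2$. Splitting on the sign of $\nu$, both subcases yield $\nu(\|P_1f\|^2+\|P_2f\|^2)\le|\nu|\|(I-P_m)f\|^2$, producing the factor $1-|\nu|$ as claimed. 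Integration in $x$ again promotes the pointwise-in-$x$ identity to the $L^2_{x,v,I}$ inequality.

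The main obstacle is not the rearrangement, which is essentially routine, but the bookkeeping around the basis: one must carefully track $\Lambda_\delta$ and moments of $I^{2/\delta}e^{-I^{2/\delta}}$, confirm the exact normalizations in Theorem \ref{expansion}, and, most importantly, check the polyatomic-specific orthogonality that the $(3v_i^2-|v|^2)$ and $v_iv_j$ directions defining $P_1$, $P_2$ are orthogonal to the energy directions $(|v|^2-3)/\sqrt 6$ and $(2I^{2/\delta}-\delta)/\sqrt{2\delta}$ defining $P_m$. Once that orthogonality is in hand, both inequalities reduce to the two-line sign-chase above.
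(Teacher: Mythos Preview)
Your proposal is correct and follows essentially the same route as the paper: the paper packages the orthonormality and mutual-orthogonality checks into a preliminary lemma (the analogue of your ``bookkeeping'' step), proves the identity $-(1-\nu+\theta\nu)\langle L_{\nu,\theta}f,f\rangle_{L^2_{v,I}}=\theta\|(I-P_p)f\|^2+(1-\theta)\bigl\{\|(I-P_m)f\|^2-\nu\|(P_1+P_2)f\|^2\bigr\}$, and then uses exactly your inequality $\|P_1f\|^2+\|P_2f\|^2\le\|(I-P_m)f\|^2$ together with the sign of $\nu$ to conclude both cases. The only cosmetic difference is that the paper works with $\|(P_1+P_2)f\|^2$ rather than $\|P_1f\|^2+\|P_2f\|^2$, which of course coincide by $P_1P_2=P_2P_1=0$.
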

Before proving this theorem, we first need to establish several technical lemmas.

\begin{lemma}\label{Nilpotent}
The projection operators $P_p$, $P_m$, $P_1$, $P_2$ satisfy
\begin{enumerate}
\item $P_p$, $P_m$, $P_1$ and $P_2$ are orthogonal projections:
\begin{eqnarray*}
~P^2_p=P_p,~ P^2_m=P_m,~P^2_1=P_1,~P^2_2=P_2.
\end{eqnarray*}
\item $P_m$, $P_1$ and $P_2$ are mutually orthogonal in the following sense:
\begin{eqnarray*}
P_mP_1=P_1P_m=P_mP_2=P_2P_m=P_1P_2=P_2P_1=0.
\end{eqnarray*}
\end{enumerate}
\end{lemma}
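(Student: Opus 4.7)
The plan is to reduce both parts of the lemma to explicit inner-product computations against the Gaussian in $v$ and the weighted measure $\Lambda_\delta e^{-I^{2/\delta}}dI$ in $I$. For Part~(1) I would invoke the following standard fact: if $\{g_1,\dots,g_n\}\subset L^2_{v,I}$ has Gram matrix $G_{ij}=\langle g_i,g_j\rangle_{L^2_{v,I}}$, then the operator $Pf=\sum_i\langle f,g_i\rangle_{L^2_{v,I}}g_i$ is the orthogonal projection onto $\mathrm{span}\{g_i\}$ if and only if $G^2=G$, so it suffices to have $\{g_i\}$ orthonormal. Accordingly, I would first verify that the defining bases of $P_p$, $P_m$ and $P_2$ are orthonormal in $L^2_{v,I}$, and handle $P_1$ separately: its three generators $g_i=\frac{3v_i^2-|v|^2}{3\sqrt{2}}\sqrt{m}$ satisfy $g_1+g_2+g_3=0$, so one can only hope for idempotency of the Gram matrix.

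The moments I would need are $\int v_iv_j\,m\,dv=\delta_{ij}$, $\int v_i^2v_j^2\,m\,dv=1+2\delta_{ij}$ and $\int|v|^4 m\,dv=15$ for the Gaussian part, together with, via the change of variable $u=I^{2/\delta}$, the centering and variance identities $\int(2I^{2/\delta}-\delta)\Lambda_\delta e^{-I^{2/\delta}}dI=0$ and $\int(2I^{2/\delta}-\delta)^2\Lambda_\delta e^{-I^{2/\delta}}dI=2\delta$. Substituting these yields orthonormality for $P_p$, $P_m$ and $P_2$ immediately; the cleanest cancellation, which I would highlight, is that the polyatomic energy mode of $P_p$ has squared norm $(6+2\delta)/(2(3+\delta))=1$ because the cross term between $|v|^2-3$ and $2I^{2/\delta}-\delta$ vanishes by separate centering. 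For $P_1$ the same moments produce a Gram matrix $M$ with $M_{ii}=2/3$ and $M_{ij}=-1/3$ for $i\neq j$, and a direct calculation gives $M^2=M$.

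For Part~(2), I would reduce mutual orthogonality to pairwise orthogonality of single basis vectors and close everything by symmetry. The pairings involving $P_2$ vanish by parity, since every generator of $P_2$ is odd in two distinct $v$-coordinates while every generator of $P_m$ or $P_1$ is even in each $v_k$ separately. For $P_m$ against $P_1$, the pairing $v_k\sqrt{m}$ with $\frac{3v_i^2-|v|^2}{3\sqrt{2}}\sqrt{m}$ again vanishes by parity, while the pairings of the isotropic-in-$v$ generators $\sqrt{m}$ and $\frac{|v|^2-3}{\sqrt{6}}\sqrt{m}$ with $\frac{3v_i^2-|v|^2}{3\sqrt{2}}\sqrt{m}$ vanish because rotational invariance of $m$ forces $\int v_i^2\varphi(|v|)\,m\,dv=\frac{1}{3}\int|v|^2\varphi(|v|)\,m\,dv$, independent of $i$. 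The pure-$I$ mode $\frac{2I^{2/\delta}-\delta}{\sqrt{2\delta}}\sqrt{m}$ factorises and uses the same $v$-cancellation.

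The calculations are entirely elementary, so the main obstacle is not a conceptual step but bookkeeping: one must check that each of the many normalization constants in the definitions of the projections has been chosen so that all cross-terms cancel exactly, and in particular that the Gram matrix of $P_1$ comes out exactly idempotent rather than some positive multiple of an idempotent. I would therefore tabulate all the relevant Gaussian and $I$-moments once at the outset and invoke them mechanically thereafter.
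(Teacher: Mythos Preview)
Your proposal is correct and follows essentially the same approach as the paper: verify orthonormality of the generating sets for $P_p$, $P_m$, $P_2$ by computing Gaussian and $I$-moments, treat $P_1$ separately because its generators are linearly dependent, and establish Part~(2) by checking that the relevant cross inner products vanish. The only cosmetic difference is that for $P_1$ you package the computation as idempotency of the $3\times 3$ Gram matrix $M$ (with $M_{ii}=2/3$, $M_{ij}=-1/3$), whereas the paper expands $P_1^2f$ directly and invokes $c_1+c_2+c_3=0$ at the end; these are the same calculation in different clothing, and your parity/rotational-invariance explanation for the vanishing pairings in Part~(2) is just a more explicit account of what the paper records as a list of zero inner products.
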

\begin{proof}
(1) The first, second and the last identities follow from the fact that the elements of each of the following sets
\[
\bigg\{\sqrt{m },\,v\sqrt{m },\,\,\frac{(|v|^2-3)+(2I^{2\delta}-\delta)}{\sqrt{2(3+\delta)}}\sqrt{m }\bigg\},
\]
\[
\bigg\{\sqrt{m },\,v\sqrt{m },\,\,\frac{|v|^2-3}{\sqrt{6}}\sqrt{m },\,
\, \frac{2I^{2/\delta}-\delta}{\sqrt{2\delta}}\sqrt{m}\bigg\},
\]
\[
\left\{v_1v_2\sqrt{m },\,\, v_2v_3\sqrt{m },\,\, v_3v_1\sqrt{m }\right\}
\]
are orthonormal, which can be checked by a direct calculation.
The identity for $P_1$ needs more consideration. We first compute
\begin{eqnarray*}
&&\left\langle (3v^2_i-|v|^2)\sqrt{m }, (3v^2_i-|v|^2)\sqrt{m }\right\rangle_{L^2_{v,I}}=12,\hspace{0.6cm}(1=1,2,3)\cr
&&\left\langle (3v^2_i-|v|^2)\sqrt{m }, (3v^2_j-|v|^2)\sqrt{m }\right\rangle_{L^2_{v,I}}=-6 \quad\quad(i\neq j).
\end{eqnarray*}
Let us denote $c_i(v)=(3v^2_i-|v|^2)/3\sqrt{2}$, and use the above computations to see that
\begin{align*}
P_1^2f&=
P_1\big\{\langle f,c_1\rangle_{L^2_{v,I}} c_1+\langle f,c_2\rangle_{L^2_{v,I}} c_2+\langle f,c_3\rangle_{L^2_{v,I}} c_3\big\}\cr
&=
\langle f,c_1\rangle_{L^2_{v,I}} \big\{P_1c_1\big\}+\langle f,c_2\rangle_{L^2_{v,I}} \big\{P_2c_2\big\}+\langle f,c_3\rangle_{L^2_{v,I}} \big\{P_3c_3\big\}\cr
&=\frac{1}{3}\langle f,c_1\rangle_{L^2_{v,I}}\big\{2c_1-c_2-c_3\big\}\cr
&+\frac{1}{3}\langle f,c_2\rangle_{L^2_{v,I}}\big\{-c_1+2c_2-c_3\big\}\cr
&+\frac{1}{3}\langle f,c_3\rangle_{L^2_{v,I}}\big\{-c_1-c_2+2c_3\big\}.
\end{align*}
The last term can be rewritten as
\begin{eqnarray*}
\Big\langle f,\frac{2c_1-c_2-c_3}{3}\Big\rangle_{L^2_{v,I}} c_1+\Big\langle f,\frac{-c_1+2c_2-c_3}{3}\Big\rangle_{L^2_{v,I}} c_2+\Big\langle f,\frac{-c_1-c_2+2c_3}{3}
\Big\rangle_{L^2_{v,I}} c_3,
\end{eqnarray*}
which, in view of $c_1+c_2+c_3=0$, is
\begin{eqnarray*}
\langle f,c_1\rangle_{L^2_{v,I}} c_1+\langle f,c_2\rangle_{L^2_{v,I}} c_2+\langle f,c_3\rangle_{L^2_{v,I}} c_3.
\end{eqnarray*}
Therefore, we have $P_1^2f=P_1f$.\newline
(2) We observe from direct computation that the following quantities all vanish:
\begin{eqnarray*}
&&\langle \big( 2I^{2/\delta}-\delta)\big)\sqrt{m},\sqrt{m}\,\rangle_{L^2_{v,I}}, ~
\langle \big( 2I^{2/\delta}-\delta)\big)\sqrt{m},v_{\ell}\sqrt{m }\,\rangle_{L^2_{v,I}},\cr
&&\langle \big( 2I^{2/\delta}-\delta)\big)\sqrt{m},\big(|v|^2-3\big)\sqrt{m }\,\rangle_{L^2_{v,I}},~
\langle \sqrt{m },(3v^2_i-|v|^2)\sqrt{m }\,\rangle_{L^2_{v,I}},\cr
&&\langle v_{\ell}\sqrt{m },(3v^2_i-|v|^2)\sqrt{m }\,\rangle_{L^2_{v,I}},~
\langle (|v|^2-3)\sqrt{m }, (3v^2_i-|v|^2)\sqrt{m }\,\rangle_{L^2_{v,I}},\cr
&&\langle v_iv_j\sqrt{m }, (3v^2_k-|v|^2)\sqrt{m }\rangle_{L^2_{v,I}}=0,~
\langle v_iv_j\sqrt{m }, \big( 2I^{2/\delta}-\delta)\big)\sqrt{m}\rangle_{L^2_{v,I}}=0,
\end{eqnarray*}
which implies (2).
\end{proof}

\begin{lemma}\label{degenerate coercivity0} For $0\leq \theta \leq 1$, we have
\begin{align*}
&-(1-\nu+\theta\nu)\langle L_{\nu,\theta}f,f\rangle_{L^2_{x,v,I}}\cr
&\qquad= \theta\|(I-P_p)f\|^2_{L^2_{x,v,I}}
+(1-\theta)\left\{\|(I-P_m)f\|^2_{L^2_{x,v,I}}-\nu\|(P_1+P_2)f\|^2_{L^2_{x,v,I}}\right\}.
\end{align*}
\end{lemma}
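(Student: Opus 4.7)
The plan is to reduce everything to an algebraic manipulation using the orthogonality and idempotency properties recorded in Lemma \ref{Nilpotent}. Since the identity is pointwise in $x$, I will work with $\langle \cdot,\cdot\rangle_{L^2_{v,I}}$ and then integrate over $\mathbb{T}^3_x$ at the end.

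\textbf{Step 1: Unwind the definition of $L_{\nu,\theta}$.} From the formula
\[
L_{\nu,\theta}f=\frac{1}{1-\nu+\theta\nu}\bigl(P_{\nu,\theta}f-f\bigr),
\]
I immediately get
\[
-(1-\nu+\theta\nu)\langle L_{\nu,\theta}f,f\rangle_{L^2_{v,I}}
=\|f\|_{L^2_{v,I}}^2-\langle P_{\nu,\theta}f,f\rangle_{L^2_{v,I}}.
\]
Plugging in $P_{\nu,\theta}=\theta P_p+(1-\theta)\bigl\{P_m+\nu(P_1+P_2)\bigr\}$ from Theorem \ref{expansion}, the right-hand side becomes
\[
\|f\|_{L^2_{v,I}}^2-\theta\langle P_pf,f\rangle_{L^2_{v,I}}
-(1-\theta)\langle P_mf,f\rangle_{L^2_{v,I}}
-(1-\theta)\nu\langle (P_1+P_2)f,f\rangle_{L^2_{v,I}}.
\]

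\textbf{Step 2: Replace inner products by squared norms.} Each of $P_p$, $P_m$, $P_1$, $P_2$ is, by construction in Theorem \ref{expansion}, a finite sum of rank-one operators $g\mapsto \langle g,\phi_i\rangle_{L^2_{v,I}}\phi_i$ with $\phi_i$ orthonormal (this orthonormality is exactly the content of Lemma \ref{Nilpotent}(1)). Hence each projection is self-adjoint and idempotent, so $\langle P_\ast f,f\rangle_{L^2_{v,I}}=\|P_\ast f\|^2_{L^2_{v,I}}$ for $\ast\in\{p,m,1,2\}$. For the combination $P_1+P_2$, Lemma \ref{Nilpotent}(2) gives $P_1P_2=P_2P_1=0$, so $(P_1+P_2)^2=P_1+P_2$; since the sum of two self-adjoint operators is self-adjoint, $P_1+P_2$ is itself an orthogonal projection and
\[
\langle (P_1+P_2)f,f\rangle_{L^2_{v,I}}=\|(P_1+P_2)f\|^2_{L^2_{v,I}}.
\]

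\textbf{Step 3: Split $\|f\|^2$ and apply Pythagoras.} Writing $\|f\|_{L^2_{v,I}}^2=\theta\|f\|_{L^2_{v,I}}^2+(1-\theta)\|f\|_{L^2_{v,I}}^2$ and using the Pythagorean identity
$\|f\|_{L^2_{v,I}}^2-\|P_\ast f\|_{L^2_{v,I}}^2=\|(I-P_\ast)f\|_{L^2_{v,I}}^2$ for the orthogonal projections $P_p$ and $P_m$, I obtain the pointwise (in $x$) identity
\[
-(1-\nu+\theta\nu)\langle L_{\nu,\theta}f,f\rangle_{L^2_{v,I}}
=\theta\|(I-P_p)f\|_{L^2_{v,I}}^2
+(1-\theta)\bigl\{\|(I-P_m)f\|_{L^2_{v,I}}^2-\nu\|(P_1+P_2)f\|_{L^2_{v,I}}^2\bigr\}.
\]
Integrating over $x\in\mathbb{T}^3$ yields the claim.

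\textbf{Main obstacle.} The computation itself is routine; the only subtle point is ensuring that $\nu(P_1+P_2)$ can be handled as a single squared-norm term. This requires both that $P_1$ and $P_2$ annihilate each other and that each is individually an orthogonal projection — precisely the information packaged in Lemma \ref{Nilpotent}. The orthogonality of $P_m,P_1,P_2$ is also what makes the decomposition of $P_{\nu,\theta}$ clean enough that no cross terms of the form $\langle P_mf,P_1f\rangle$ or $\langle P_mf,P_2f\rangle$ appear in Step 2.
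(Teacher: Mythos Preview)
Your argument is correct and follows essentially the same route as the paper: expand $P_{\nu,\theta}$, use self-adjointness together with the idempotency relations of Lemma~\ref{Nilpotent} to convert each $\langle P_\ast f,f\rangle$ into $\|P_\ast f\|^2$ (and in particular $(P_1+P_2)^2=P_1+P_2$), then apply Pythagoras. One small inaccuracy: the generators $c_i=(3v_i^2-|v|^2)\sqrt{m}/(3\sqrt{2})$ of $P_1$ are \emph{not} orthonormal (indeed $c_1+c_2+c_3=0$), so Lemma~\ref{Nilpotent}(1) is not a statement about orthonormality of the $\phi_i$; however this does not affect your proof, since what you actually use is only self-adjointness (clear from the symmetric rank-one form) and idempotency (which is exactly what Lemma~\ref{Nilpotent}(1) asserts).
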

\begin{proof}
From the definition of $L_{\nu,\theta}$, we have
\begin{eqnarray}\label{setting}
\begin{split}
&-(1-\nu+\theta\nu)\langle L_{\nu,\theta}f,f\rangle_{L^2_{v}}\cr
&\qquad=-\theta\langle P_{p}f-f,f\rangle_{L^2_{v,I}}
-(1-\theta)\langle P_mf-f+\nu(P_1+P_2)f,f\rangle_{L^2_{v,I}}\cr
&\qquad\equiv \theta I+(1-\theta)II.
\end{split}
\end{eqnarray}
Then, the desired result follows from  (1) and (2) below.\newline
\noindent(1) {\bf The estimate of I:} Lemma \ref{Nilpotent} (1) immediately gives
\[
\langle P_{p}f-f,f\rangle_{L^2_{v,I}}=-\|(I-P_{p})f\|^2_{L^2_{v,I}}.
\]
(2) {\bf The estimate of II:}
As in the previous case, we have from Lemma \ref{Nilpotent} (1)
\[
\langle P_{m}f-f,f\rangle_{L^2_{v,I}}=-\|(I-P_{m}f)\|^2_{L^2_{v,I}}.
\]
On the other hand, we observe from Lemma \ref{Nilpotent} that
\begin{align*}
(P_1+P_2)^2&=P^2_1+P_1P_2+P_2P_1+P_2^2\cr
&=P_1+P_2,
\end{align*}
to derive
\begin{align*}
\langle(P_1+P_2)f,f\rangle_{L^2_{v,I}}&=\langle(P_1+P_2)^2f,f\rangle_{L^2_{v,I}}\cr
&=\langle(P_1+P_2)f,(P_1+P_2)f\rangle_{L^2_{v,I}}\cr
&=\|(P_1+P_2)f\|^2_{L^2_{v,I}},
\end{align*}
where we used the symmetry of $P_1+P_2$.
We combine these estimates to derive
\begin{align*}
II&=-\langle P_mf-f+\nu(P_1+P_2)f,f\rangle_{L^2_{v,I}}\cr
&=-\langle P_mf-f,f\rangle_{L^2_{v,I}}-\nu\langle(P_1+P_2)f,f\rangle_{L^2_{v,I}}\cr
&= \|(I-P_{m}f)\|^2_{L^2_{v,I}}-\nu\|(P_1+P_2)f\|^2_{L^2_{v,I}}.
\end{align*}
\end{proof}
%
%
%
%
\noindent An immediate but important ramification of the above dissipation estimate is that the null space of the linearized relaxation operator
has the following dichotomy:
\begin{proposition}\label{Kernel L} For $0\leq \theta\leq 1$ and $-1/2<\nu<1$, the kernel of the linearized relaxation operator is given by
\begin{eqnarray*}
Ker\{L_{\nu,\theta}\}=span\bigg\{\sqrt{m },~v\sqrt{m },~\frac{(|v|^2-3)+(2I^{2/\delta}-\delta)}{\sqrt{2(3+\delta)}}\sqrt{m}\bigg\}, \quad (\theta\neq0)
\end{eqnarray*}
and
\begin{eqnarray*}
Ker\{L_{\nu,0}\}=span\bigg\{\sqrt{m },~v\sqrt{m },~\frac{|v|^2-3}{\sqrt{6}}\sqrt{m},~ \frac{I^{2/\delta}-\delta}{\sqrt{2\delta}}\sqrt{m }\bigg\}.
\qquad (\theta=0)
\end{eqnarray*}
\end{proposition}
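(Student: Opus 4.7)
The plan is to prove the two set inclusions separately, using Lemma \ref{Nilpotent} for the easy direction and Lemma \ref{degenerate coercivity0} for the harder one.

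For the inclusion $\supset$, I would verify directly that each listed basis element $f$ satisfies $P_{\nu,\theta}f = f$, using $L_{\nu,\theta}f = (1-\nu+\theta\nu)^{-1}(P_{\nu,\theta}f - f)$ with $P_{\nu,\theta} = \theta P_p + (1-\theta)\{P_m + \nu(P_1+P_2)\}$. For $\theta>0$, the five listed vectors span the image of $P_p$, which by construction is contained in the image of $P_m$; hence $P_p f = P_m f = f$, and $(P_1 + P_2) P_m = 0$ from Lemma \ref{Nilpotent} gives $(P_1+P_2)f = 0$, so $P_{\nu,\theta}f = f$. For $\theta = 0$, the listed vectors span the image of $P_m$ (up to an innocuous change of basis in the internal-energy slot), so $P_m f = f$ and the same orthogonality kills $(P_1+P_2)f$, giving $P_{\nu,0}f = f$.

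For the inclusion $\subset$, suppose $L_{\nu,\theta}f = 0$. Lemma \ref{degenerate coercivity0} yields
\begin{align*}
\theta\|(I-P_p)f\|^2 + (1-\theta)\|(I-P_m)f\|^2 - (1-\theta)\nu\|(P_1+P_2)f\|^2 = 0.
\end{align*}
Two structural facts from Lemma \ref{Nilpotent} drive the analysis. First, the image of $P_p$ is contained in the image of $P_m$, giving the Pythagorean identity $\|(I-P_p)f\|^2 = \|(I-P_m)f\|^2 + \|(P_m-P_p)f\|^2$. Second, $(P_1+P_2)P_m = 0$ gives $(P_1+P_2)f = (P_1+P_2)(I-P_m)f$, and since $P_1+P_2$ is an orthogonal projection, $\|(P_1+P_2)f\|^2 \leq \|(I-P_m)f\|^2$. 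Substituting the first identity rearranges the vanishing relation to
\begin{align*}
\|(I-P_m)f\|^2 + \theta\|(P_m-P_p)f\|^2 = (1-\theta)\nu\|(P_1+P_2)f\|^2.
\end{align*}

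The concluding step is a sign analysis on $\nu$. If $\nu \leq 0$, the right-hand side is nonpositive while the left is nonnegative, forcing $(I-P_m)f = 0$ (and $(P_m-P_p)f = 0$ whenever $\theta > 0$). If $0 < \nu < 1$, applying the second structural bound gives
\begin{align*}
[1-(1-\theta)\nu]\|(I-P_m)f\|^2 + \theta\|(P_m-P_p)f\|^2 \leq 0,
\end{align*}
and since $0 \leq \theta \leq 1$ together with $\nu < 1$ ensure $1-(1-\theta)\nu > 0$, the same conclusion follows. Thus for $\theta > 0$ we obtain $(I-P_p)f = 0$, placing $f$ in the image of $P_p$; for $\theta = 0$ we obtain $(I-P_m)f = 0$, placing $f$ in the image of $P_m$, which are exactly the asserted spans. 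The main obstacle is the indefinite sign of the cross term $-(1-\theta)\nu\|(P_1+P_2)f\|^2$ when $\nu > 0$; the key idea is to exploit the orthogonality $(P_1+P_2)P_m = 0$ in order to dominate this indefinite contribution by the positive $\|(I-P_m)f\|^2$ term.
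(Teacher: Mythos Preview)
Your proposal is correct and follows essentially the same route as the paper: both arguments rest on Lemma~\ref{degenerate coercivity0}, the orthogonality $(P_1+P_2)P_m=0$ from Lemma~\ref{Nilpotent}, and the resulting bound $\|(P_1+P_2)f\|\leq\|(I-P_m)f\|$. The only organizational difference is that the paper first isolates the claim $B(f):=\|(I-P_m)f\|^2-\nu\|(P_1+P_2)f\|^2\geq(1-|\nu|)\|(I-P_m)f\|^2\geq0$ (handling both signs of $\nu$ at once via $|\nu|$), and then argues that $\theta A(f)+(1-\theta)B(f)=0$ with both summands nonnegative forces $A(f)=B(f)=0$, whence $f=P_pf=P_mf$; you instead substitute the Pythagorean identity $\|(I-P_p)f\|^2=\|(I-P_m)f\|^2+\|(P_m-P_p)f\|^2$ directly and do a sign split on $\nu$, arriving at the same conclusion through a single combined inequality.
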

\begin{proof}
For simplicity, set
\begin{align*}
A(f)&=\|(I-P_p)f\|^2_{L^2_{x,v,I}}\cr
B(f)&=\|(I-P_m)f\|^2_{L^2_{x,v,I}}-\nu\|(P_1+P_2)f\|^2_{L^2_{x,v,I}},
\end{align*}
so that, in view of Lemma \ref{degenerate coercivity0}, we write
\begin{align}\label{remove}
-(1-\nu+\theta\nu)\langle L_{\nu,\theta}f,f\rangle_{L^2_{x,v,I}}= \theta A(f)+(1-\theta)B(f).
\end{align}
The non-negativity of $A(f)$ is clear. We claim that it's the case for $B(f)$ too:\newline

\noindent$\bullet$ {\bf Clam}: $B(f)\geq 0$ for $-1/2<\nu<1$.\newline\newline
{\bf Proof of the claim:}
Lemma \ref{Nilpotent} says  $(P_1+ P_2)\perp P_m$, so that
\begin{align}\label{setting2}
\|(P_1+P_2)f\|_{L^2_{v,I}}
=\|(P_1+P_2)(I-P_m)f\|^2_{L^2_{v,I}}.
\end{align}
Then, since $(P_1+P_2)^2=P_1+P_2$, we see that
\begin{align}
\begin{split}\label{sincee}
\|(P_1+P_2)(I-P_m)f\|^2_{L^2_{v,I}}&=\langle(P_1+P_2)(I-P_m)f, (P_1+P_2)(I-P_m)f \rangle_{L^2_{v,I}}\cr
&=\langle(P_1+P_2)^2(I-P_m)f, (I-P_m)f \rangle_{L^2_{v,I}}\cr
&=\langle(P_1+P_2)(I-P_m)f,(I-P_m)f \rangle_{L^2_{v,I}}\cr
&\leq\|(P_1+P_2)(I-P_m)f\|_{L^2_{v,I}} \|(I-P_m)f \|_{L^2_{v,I}}.
\end{split}
\end{align}
Therefore, (\ref{setting2}) and (\ref{sincee}) gives
\begin{align}\label{setting3}
\|(P_1+P_2)f\|_{L^2_{v,I}}\leq \|(I-P_m)f \|_{L^2_{v,I}}.
\end{align}
Hence, we have
\begin{align}\label{setttt}
\begin{split}
B(f)&\geq\|(I-P_{m})f\|^2_{L^2_{v,I}}-|\nu|\|(I-P_m)f\|^2_{L^2_{v,I}}\cr
&= (1-|\nu|)\|(I-P_{m}f)\|^2_{L^2_{v,I}}\cr
&\geq 0.
\end{split}
\end{align}
This proves the claim.\newline

Now we return to the proof of the proposition. Consider
\begin{align}\label{fromfrom}
L_{\nu,\theta}f=\theta A(f)+(1-\theta)B(f)=0.
\end{align}
We divide it into the following two cases:\newline

\noindent(1) (The case $\theta=0$): In this case, (\ref{fromfrom}) reduces to
\begin{align*}
B(f)=0.
\end{align*}
That is,
\begin{align*}
\|(I-P_m)f\|^2_{L^2_{x,v,I}}+\nu\|(P_1+P_2)f\|^2_{L^2_{x,v,I}}=0,
\end{align*}
which, in view of (\ref{setting3}), implies
\begin{align*}
\|(I-P_m)f\|^2_{L^2_v}=-\nu\|(P_1+P_2)f\|_{L^2_v}
\leq|\nu|\|(I-P_m)f\|_{L^2_v},
\end{align*}
Therefore,
\begin{align*}
(1-|\nu|)\|(I-P_m)f\|^2_{L^2_{x,v,I}}\leq0,
\end{align*}
so that
\begin{eqnarray}\label{second term}
f=P_mf.
\end{eqnarray}
In conclusion, when $\theta=0$, we have
\[
Ker\{L_{\nu,0}\}=span\bigg\{\sqrt{m },~v\sqrt{m },~\frac{|v|^2-3}{\sqrt{6}}\sqrt{m},~ \frac{I^{2/\delta}-\delta}{\sqrt{2\delta}}\sqrt{m }\bigg\}.
\]

\noindent(2) (The case $\theta\neq0$): Since both $A$ and $B$ are non-negative, we have from (\ref{fromfrom})
\begin{align*}
A(f)=B(f)=0.
\end{align*}
First,
\begin{align*}
A(f)=\|(I-P_{p})f\|^2_{L^2_{x,v,I}}=0,
\end{align*}
clearly gives
\begin{eqnarray*}
f=P_pf.
\end{eqnarray*}
On the other hand, it was shown in the previous case that $B(f)=0$ implies
\begin{eqnarray*}
f=P_mf.
\end{eqnarray*}
Therefore,  when $0<\theta\leq1$, we have
\begin{align*}
f=P_{p}f=P_mf.
\end{align*}
Hence, the kernel is given by the intersection of
\begin{align*}
span\bigg\{\sqrt{m },~v\sqrt{m },~\frac{(|v|^2-3)+(2I^{2/\delta}-\delta)}{\sqrt{2(3+\delta)}}\sqrt{m}\bigg\}
\end{align*}
and
\begin{align*}
span\bigg\{\sqrt{m },~v\sqrt{m },~\frac{|v|^2-3}{\sqrt{6}}\sqrt{m},~ \frac{I^{2/\delta}-\delta}{\sqrt{2\delta}}\sqrt{m }\bigg\},
\end{align*}
This gives the desired result since the former is a subspace of the latter.
\end{proof}
%
%
%
%
\noindent We are now ready to prove the main theorem of this section.
\noindent\subsection{\bf Proof of Theorem \ref{degenerate coercivity}:}
\noindent(1) (The case of $0<\theta\leq 1$): In the proof of Proposition \ref{Kernel L}, we have shown that the degeneracy of
$B(f)$ is strictly bigger than that of $A(f)$. Therefore, we can ignore $B$ in (\ref{remove}) to obtain
\[
-(1-\nu+\theta\nu)\langle L_{\nu,\theta}f,f\rangle_{L^2_{v,I}}\geq \theta\|(I-P_p)f\|^2_{L^2_{v,I}}.
\]
\noindent(2) (The case of $\theta=0$): In this case, we are left with
\begin{align*}
-(1-\nu)\langle L_{\nu,0}f,f\rangle_{L^2_{v,I}}=B(f).
\end{align*}
Recall that we have shown in (\ref{setttt}) that
\[
B(f)\geq (1-|\nu|)\|(I-P_m)f\|^2_{L^2_{v,I}},
\]
to see
\begin{align*}
-(1-\nu)\langle L_{\nu,0}f,f\rangle_{L^2_{v,I}}\geq(1-|\nu|)\|(I-P_m)f\|^2_{L^2_{v,I}}.
\end{align*}
This completes the proof.\newline


\section{Estimates on the macroscopic fields}
\subsection{Estimates on the macroscopic fields}
In this section, we establish estimates on the macroscopic fields which will be crucially used to control the nonlinear term $\Gamma_{\nu,\theta}(f)$.
\begin{lemma}\label{ULestimatesofRhoUT}
Assume $\mathcal{E}(t)$ is sufficiently small, then there exists a positive constant $C>0$  such that
\begin{eqnarray*}
&&(1)~|\rho_{\eta}(x,t)-1|\leq C\sqrt{\mathcal{E}(t)},\cr
&&(2)~|U^i_{\eta }(x,t)|\leq C\sqrt{\mathcal{E}(t)}, \hspace{1.81cm}(1\leq i\leq 3)\cr
&&(3)~|\mathcal{T}_{\nu,\theta\eta}^{ii}(x,t)-1|\leq C\sqrt{\mathcal{E}(t)},\hspace{0.89cm}(1\leq i\leq 3)\cr
&&(4)~|\mathcal{T}^{ij}_{\nu,\theta \eta}(x,t)|\leq   C
\sqrt{\mathcal{E}(t)},\hspace{1.49cm}(1\leq i<j\leq 3)\cr
&&(5)~|T_{\theta \eta}(x,t)-1|\leq  C\sqrt{\mathcal{E}(t)}.
\end{eqnarray*}
\end{lemma}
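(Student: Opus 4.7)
The transitional fields $\rho_\eta,U_\eta,\mathcal{T}_{\nu,\theta\eta},T_{\theta\eta}$ are smooth rational functions of the moments $A(\eta),B(\eta),C(\eta),D(\eta)$ via the inversion \eqref{sense}, with base point $(A,B,C,D)=(1,0,Id,1)$ mapping to the base values $(\rho_\eta,U_\eta,\mathcal{T}_{\nu,\theta\eta},T_{\theta\eta})=(1,0,Id,1)$. My plan is to bound the four moments in $L^\infty_x$, uniformly in $\eta\in[0,1]$, by $C\sqrt{\mathcal{E}(t)}$, and then push these bounds through the inversion.

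Inspecting \eqref{TMF}, each of $A(\eta),B(\eta),C(\eta),D(\eta)$ is a linear functional of $F_\eta=m+\eta f\sqrt{m}$ and is therefore affine in $\eta$ with $\eta$-independent slope. Using $A(0)=1$, $B(0)=0$, $C(0)=Id$, $D(0)=1$, this yields
\[
A(\eta)-1,\;\; B(\eta),\;\; C(\eta)-Id,\;\; D(\eta)-1 \;=\; \eta\int f(x,v,I)\,P_\star(v,I)\,\sqrt{m}\,dv\,dI,
\]
where $P_\star$ stands, case by case, for a fixed polynomial in $v$ and $I^{2/\delta}$ with coefficients depending only on $\nu,\theta,\delta$. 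Cauchy--Schwarz in $(v,I)$, together with the Gaussian decay of $m$ which guarantees $\sqrt{m}\,P_\star\in L^2_{v,I}$, gives
\[
|A(\eta)-1|+|B(\eta)|+|C(\eta)-Id|+|D(\eta)-1|\;\leq\; C\,\|f(x,\cdot,\cdot)\|_{L^2_{v,I}}
\]
pointwise in $x$ and uniformly in $\eta\in[0,1]$.

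For the pointwise-in-$x$ control of $f$, I would apply the standard embedding $H^2(\mathbb{T}^3)\hookrightarrow L^\infty(\mathbb{T}^3)$ fibrewise in $(v,I)$ to obtain
\[
\sup_{x\in\mathbb{T}^3}\|f(x,\cdot,\cdot)\|_{L^2_{v,I}}^2\;\leq\; C\sum_{|\gamma|\leq 2}\|\partial_x^\gamma f\|_{L^2_{x,v,I}}^2\;\leq\; C\,\mathcal{E}(t),
\]
since $N\geq 4$. Combined with the previous step, this gives $|A(\eta)-1|+|B(\eta)|+|C(\eta)-Id|+|D(\eta)-1|\leq C\sqrt{\mathcal{E}(t)}$ uniformly in $(x,\eta)$. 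Choosing $\mathcal{E}(t)$ small enough that this bound forces $A(\eta)=\rho_\eta\geq 1/2$, the denominators in \eqref{sense} are safely bounded away from zero, and a first-order Taylor expansion of the resulting smooth rational functions around the base point delivers each of (1)--(5). The $|B(\eta)|^2$ and $B(\eta)\otimes B(\eta)$ contributions that appear in the formulas for $\mathcal{T}_{\nu,\theta\eta}$ and $T_{\theta\eta}$ are of quadratic order $\mathcal{E}(t)$, hence absorbed into $\sqrt{\mathcal{E}(t)}$ under the smallness assumption.

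\textbf{Expected obstacle.} This lemma is essentially a bookkeeping estimate and I do not foresee a substantial analytic difficulty. The two items that warrant care are (a) the uniformity in $\eta\in[0,1]$, which follows from the $\eta$-independence of the linear moment coefficients, and (b) the integrability of polynomial weights against $m$, which is guaranteed by the Gaussian decay of the global polyatomic Maxwellian.
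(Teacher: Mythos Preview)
Your proposal is correct and follows essentially the same approach as the paper: both rely on the linearity of the moments in $\eta$, Cauchy--Schwarz against the Gaussian weight $\sqrt{m}$, the Sobolev embedding $H^2(\mathbb{T}^3)\hookrightarrow L^\infty(\mathbb{T}^3)$ to control $\|f(x,\cdot,\cdot)\|_{L^2_{v,I}}$ pointwise, and the lower bound $\rho_\eta\geq 1-C\sqrt{\mathcal{E}(t)}$ to handle the division. The only organizational difference is that the paper computes each macroscopic field directly from its defining integral (e.g.\ writing out $\rho_\eta\mathcal{T}^{ii}_{\nu,\theta\eta}$ and bounding it above and below separately), whereas you first bound the raw moments $(A,B,C,D)$ and then push through the inversion \eqref{sense}; this is a cosmetic distinction.
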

\begin{proof}
(1) Since
\[
|F_{\eta}-m|=|\eta f\sqrt{m}|\leq |f|\sqrt{m},
\]
H\"{o}lder inequality and Sobolev embedding yield
\begin{eqnarray*}
|\rho_{\eta}(x,t)-1|=\int |f|\sqrt{m }dvdI\leq C\|f\|_{L^2_{v,I}}\leq C\sqrt{\mathcal{E}(t)}.
\end{eqnarray*}
(2) Note that
\[
\int F_{\eta}v dvdI=\int (m+\eta f\sqrt{m})v dvdI=\eta\int fv\sqrt{m}dvdI.
\]
Therefore, recalling the lower bound estimate of $\rho_{\eta}$ in (1), and employing H\"{o}lder inequality and Sobolev embedding, we have
\begin{align*}
|U_{\eta}|\leq \frac{1}{\rho_{\eta}}\left|\int_{\mathbb{R}^3} fv\sqrt{m }dvdI\right|\leq\frac{C\|f\|_{L^2_{v,I}}}{1-\sqrt{\mathcal{E}(t)}}
\leq C\sqrt{\mathcal{E}(t)}
\end{align*}
for sufficiently small $\mathcal{E}(t)$.\newline
(3) We recall  (\ref{TMF}) to write
the diagonal elements of $\rho\mathcal{T}_{\nu,\theta,\eta}$ as
 \begin{align*}
&\hspace{-0.3cm}\rho_{\eta}\mathcal{T}^{ii}_{\nu,\theta\eta}\cr
&=\theta\left[\left\{\int_{\mathbb{R}^3\times \mathbb{R}_+}F_{\eta}\left(\frac{1}{3+\delta}|v|^2+\frac{2}{3+\delta}I^{2/\delta}\right)
dvdI\right\}-\frac{1}{3+\delta}\rho_{\eta}|U_{\eta}|^2\right]\cr
&+(1-\theta)\left[\left\{\int_{\mathbb{R}^3\times \mathbb{R}_+}F_{\eta}\left(\frac{1-\nu}{3}|v|^2+\nu v^2_i\right)dvdI\right\}-\left\{\frac{1-\nu}{3}\rho_{\eta}|U_{\eta}|^2+\nu\rho_{\eta} U^2_{\eta,i}\right\}\right].
\end{align*}
Therefore, using
\[
\int m\left(\frac{1}{3+\delta}|v|^2+\frac{2}{3+\delta}I^{2/\delta}\right)dvdI=1,
\]
we have
 \begin{align*}
&\hspace{-0.3cm}\rho_{\eta}\mathcal{T}^{ii}_{\nu,\theta\eta}\cr
&=1+\theta\left[\left\{\int_{\mathbb{R}^3\times \mathbb{R}_+}\eta f\sqrt{m}\left(\frac{1}{3+\delta}|v|^2+\frac{2}{3+\delta}I^{2/\delta}\right)
dvdI\right\}-\frac{\theta}{3+\delta}\rho_{\eta}|U_{\eta}|^2\right]\cr
&+(1-\theta)\left[\left\{\int_{\mathbb{R}^3\times \mathbb{R}_+}\eta f\sqrt{m}\left(\frac{1-\nu}{3}|v|^2+\nu v^2_i\right)dvdI\right\}-\left\{\frac{1-\nu}{3}\rho_{\eta}|U_{\eta}|^2+\nu\rho_{\eta} U^2_{\eta,i}\right\}\right]\cr
&\leq1+\theta\left\{\int_{\mathbb{R}^3\times \mathbb{R}_+}\eta f\sqrt{m}\left(\frac{1}{3+\delta}|v|^2+\frac{2}{3+\delta}I^{2/\delta}\right)
dvdI\right\}\cr
&+(1-\theta)\left\{\int_{\mathbb{R}^3\times \mathbb{R}_+}\eta f\sqrt{m}\left(\frac{1-\nu}{3}|v|^2+\nu v^2_i\right)dvdI\right\}\cr
&\leq 1+C_{\theta,\delta}\|f\|_{L^2_{v,I}}.
\end{align*}
In the last line, we used H\"{o}lder inequality. Now, the estimate (1) above on $\rho_{\eta}$ and Sobolev embedding gives
\begin{align}\label{ThetaUpper}
\mathcal{T}^{ii}_{\nu,\theta\eta}-1&\leq \frac{1-\rho_{\theta}+C_{\theta,\delta}\|f\|_{L^2_{v,I}}}{\rho_{\theta}}\cr
&\leq \frac{C\sqrt{\mathcal{E}(t)}+C_{\theta,\delta}\|f\|_{L^2_{v,I}}}{1-\sqrt{\mathcal{E}(t)}}\cr
&\leq C_{\theta,\delta}\sqrt{\mathcal{E}(t)}.
\end{align}
Similarly, we compute
\begin{align*}
&\hspace{-0.3cm}\rho_{\eta}\mathcal{T}^{ii}_{\nu,\theta\eta}\cr
&=1+\theta\left[\left\{\int_{\mathbb{R}^3\times \mathbb{R}_+}\eta f\sqrt{m}\left(\frac{1}{3+\delta}|v|^2+\frac{2}{3+\delta}I^{2/\delta}\right)
dvdI\right\}-\frac{1}{3+\delta}\rho_{\eta}|U_{\eta}|^2\right]\cr
&+(1-\theta)\left[\left\{\int_{\mathbb{R}^3\times \mathbb{R}_+}\eta f\sqrt{m}\left(\frac{1-\nu}{3}|v|^2+\nu v^2_i\right)dvdI\right\}-\left\{\frac{1-\nu}{3}\rho_{\eta}|U_{\eta}|^2+\nu\rho_{\eta} U^2_{\eta,i}\right\}\right]\cr
&\geq1-\theta \left\{C_{\delta}\|f\|_{L^2_{v,I}}
+\frac{\theta}{3+\delta}\frac{\|f\|^2_{L^2_{v,I}}}{1-\sqrt{\mathcal{E}(t)}}\right\}
-(1-\theta)\left\{\eta C_{\nu}\|f\|_{L^2_{v,I}}
+C_{\nu}\frac{\|f\|^2_{L^2_{v,I}}}{1-\sqrt{\mathcal{E}(t)}}\right\}\cr
&\geq1-C_{\theta,\delta,\nu}\|f\|^2_{L^2_{v,I}}\cr
&\geq1-C_{\theta,\delta,\nu}\sqrt{\mathcal{E}(t)},
\end{align*}
yielding
\begin{align}\label{ThetaLower}
\begin{split}
\mathcal{T}^{ii}_{\nu,\theta\eta}-1&\geq\frac{1-\rho_{\theta}-C_{\theta,\delta,\nu}\sqrt{\mathcal{E}(t)}}{\rho_{\theta}}\cr
&\geq\frac{-C_{\theta,\delta,\nu}\sqrt{\mathcal{E}(t)}}{1+\sqrt{\mathcal{E}(t)}}\cr
&\geq -C_{\theta,\delta,\nu}\sqrt{\mathcal{E}(t)}.
\end{split}
\end{align}
(\ref{ThetaUpper}) and (\ref{ThetaLower}) give the desired result for $\mathcal{T}_{\nu,\theta\eta}^{ii}$ $(i=1,2,3)$.\newline

\noindent(4) Non-diagonal entries of $\mathcal{T}_{\nu,\theta\eta}$ are given by
\begin{align*}
\rho_{\eta}\mathcal{T}^{ij}_{\nu,\theta\eta}&=
(1-\theta)\left\{\int_{\mathbb{R}^3\times \mathbb{R}_+}F_{\eta}\left(\nu v_iv_j\right)dvdI-\nu\rho_{\eta} U^i_{\eta}U^j_{\eta}\right\}\cr
&=(1-\theta)\left\{\int_{\mathbb{R}^3\times \mathbb{R}_+}\big(m+\eta f\sqrt{m}\big)\left(\nu v_iv_j\right)dvdI-\nu\rho_{\eta} U^i_{\eta}U^j_{\eta}\right\}\cr
&=(1-\theta)\left\{ \eta\nu \int_{\mathbb{R}^3\times \mathbb{R}_+}f\sqrt{m}v_iv_jdvdI-\nu\rho_{\eta} U^i_{\eta}U^j_{\eta}\right\}.
\end{align*}
Hence
\begin{align*}
\big|\rho_{\eta}\mathcal{T}^{ij}_{\nu,\theta\eta}\big|
&\leq |\nu|\Big|\int_{\mathbb{R}^3\times\mathbb{R}^+}fv_iv_j\sqrt{m }dvdI\Big|+|\nu|\rho_{\eta}|U^i_{\eta}||U^j_{\eta}|\cr
&\leq |\nu| C\|f\|_{L^2_{v,I}}+|\nu| C\mathcal{E}(t)\cr
&\leq|\nu| C\sqrt{\mathcal{E}(t)}.
\end{align*}
(5) The estimate follows by similar argument using the following identity
\begin{align*}
&T_{\theta\eta}=\theta\left\{\frac{1}{\rho_{\eta}}\int_{\mathbb{R}^3\times \mathbb{R}_+}F_{\eta}\left(\frac{1}{3+\delta}|v|^2+\frac{2}{3+\delta}I^{2/\delta}\right)dvdI
-\frac{1}{3+\delta}|U_{\eta}|^2\right\}\cr
&\qquad+(1-\theta)\left\{\frac{2}{\delta}\frac{1}{\rho_{\eta}}\int_{\mathbb{R}^3\times \mathbb{R}_+}F_{\eta}I^{2/\delta}dvdI\right\}.
\end{align*}
We omit it.
\end{proof}
%
%
%
%
%
In the following, we estimate the derivatives of the macrosopic fields.
\begin{lemma}\label{ULestimatesofRhoUT1}
For sufficiently small $\mathcal{E}(t)$, we have
\begin{eqnarray*}
&&(1)~|\partial^{\alpha}\rho_{\eta}(x,t)|\leq C_{\alpha}\sqrt{\mathcal{E}(t)},\cr
&&(2)~|\partial^{\alpha}U_{\eta}(x,t)|\leq C_{\alpha}\sqrt{\mathcal{E}(t)},\cr
&&(3)~|\partial^{\alpha}\mathcal{T}_{\nu,\theta\eta}^{ij}(x,t)|\leq C_{\alpha}\sqrt{\mathcal{E}(t)},\cr
&&(4)~|\partial^{\alpha}T_{\theta\eta}(x,t)|\leq C_{\alpha}\sqrt{\mathcal{E}(t)}.
\end{eqnarray*}
Here $\partial$ denotes derivatives in $x, t$.
\end{lemma}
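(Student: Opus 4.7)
The plan is to reduce each of the four inequalities to the $L^2_{x,v,I}$ control of $\partial^\alpha f$ packaged in $\mathcal{E}(t)$, paralleling the scheme used in Lemma~\ref{ULestimatesofRhoUT} but with one preliminary differentiation step. The essential inputs are the explicit representations (\ref{TMF}) and (\ref{sense}), which express $A(\eta), B(\eta), C(\eta), D(\eta)$ as integrals of $f$ against polynomially weighted Maxwellians, together with the Sobolev embedding $H^2_x \hookrightarrow L^\infty_x$, available since $N\geq 4$.

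First I would treat $\rho_\eta$ directly. Writing $\rho_\eta = 1 + \eta\int f\sqrt{m}\,dvdI$ and commuting $\partial^\alpha$ with the integral, Cauchy--Schwarz in $(v,I)$ followed by Sobolev embedding in $x$ yields $|\partial^\alpha \rho_\eta(x,t)| \leq C_\alpha \sqrt{\mathcal{E}(t)}$. The same elementary argument, applied to $B(\eta) = \eta\int fv\sqrt{m}\,dvdI$ and to the fluctuating parts of $C(\eta)$ and $D(\eta)$, gives $|\partial^\alpha A(\eta)|, |\partial^\alpha B(\eta)|, |\partial^\alpha C(\eta)|, |\partial^\alpha D(\eta)| \leq C_\alpha \sqrt{\mathcal{E}(t)}$ for every $|\alpha|\geq 1$, while the bounds $|A-1|, |B|, |C-Id|, |D-1|\leq C\sqrt{\mathcal{E}(t)}$ are exactly the content of Lemma~\ref{ULestimatesofRhoUT}.

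Next I would express $U_\eta$, $\mathcal{T}_{\nu,\theta\eta}$ and $T_{\theta\eta}$ through the algebraic relations (\ref{sense}) and differentiate using the generalized Leibniz rule together with Fa\`{a} di Bruno for $1/A^k$. Since Lemma~\ref{ULestimatesofRhoUT}(1) guarantees $\rho_\eta\geq 1/2$ for $\mathcal{E}(t)$ sufficiently small, each negative power of $A$ is smooth with uniformly bounded derivatives (expressible as polynomials in the derivatives of $A$ divided by a bounded quantity). Every term in the resulting expansion is a product of factors each of which is either $O(1)$ or $O(\sqrt{\mathcal{E}(t)})$, and any term involving at least one positive-order derivative of $A, B, C$ or $D$ carries at least one small factor. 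Because $\mathcal{E}(t)^{k/2}\leq \mathcal{E}(t)^{1/2}$ for $k\geq 1$ whenever $\mathcal{E}(t)\leq 1$, every summand is bounded by $C_\alpha \sqrt{\mathcal{E}(t)}$; assembling them yields the four stated estimates.

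The main obstacle is bookkeeping rather than analysis: the quotient structure of (\ref{sense}), combined with the quadratic dependence of the numerators of $\mathcal{T}_{\nu,\theta\eta}$ and $T_{\theta\eta}$ on $B(\eta)$ and the $\nu,\theta$-dependent off-diagonal coupling in $\mathcal{T}_{\nu,\theta\eta}$, generates a large number of Leibniz terms. Organizing them by multi-index partitions and verifying termwise that at least one $O(\sqrt{\mathcal{E}(t)})$ factor is present requires care but introduces no new analytic ingredient beyond the reduction above.
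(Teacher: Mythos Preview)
Your proposal is correct and follows essentially the same route as the paper: differentiate the quotient representations of $U_\eta$, $\mathcal{T}_{\nu,\theta\eta}$, $T_{\theta\eta}$ via the Leibniz rule, use the lower bound $\rho_\eta\geq 1-C\sqrt{\mathcal{E}(t)}$ from Lemma~\ref{ULestimatesofRhoUT} to control negative powers of $\rho_\eta$, and bound each moment factor by $\|\partial^{\alpha'}f\|_{L^2_{v,I}}$ through Cauchy--Schwarz and Sobolev embedding. Your explicit passage through $A,B,C,D$ and the relations~(\ref{sense}) is a mild repackaging of the paper's direct computation, but the analytic content is identical.
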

\begin{proof}
(1) Since $\partial^{\alpha}\!\int mdvdI =0$, we have
\begin{align*}
|\partial^{\alpha}\rho_{\eta}|&=\left|\partial^{\alpha}\left(\int_{\mathbb{R}^3\times\mathbb{R}_+} \big(m +\eta f\sqrt{m }\big)dvdI\right)\right|=\eta\int |\partial^{\alpha} f|\sqrt{m }dvdI
\leq\|\partial^{\alpha}f\|_{L^2_{v,I}}.
\end{align*}
(2) We apply $\partial^{\alpha}$ to $U=\frac{1}{\rho}\int f v\sqrt{m }dv$ and use Leibniz rule to derive
\begin{eqnarray*}
\displaystyle|\partial^{\alpha}U_{\eta}|\leq
\frac{C_{|\alpha|}}{\rho_{\eta}^{2|\alpha|}}
\left(\sum_{|\alpha_1|\leq N}\int_{\mathbb{R}^3\times\mathbb{R}_+}|\partial^{\alpha_1}f||v|\sqrt{m }dvdI\right)\left(1+\sum_{|\alpha_2|\leq N}|\partial^{\alpha_2}\rho_{\eta}|\right)^{|\alpha|}.
\end{eqnarray*}
Then, we have from H\"{o}lder inequality and Sobolev embedding that
\begin{align*}
\displaystyle|\partial^{\alpha}U_{\eta}|&\leq
\frac{C_{\alpha}}{(1-\mathcal{E}(t))^{2|\alpha|}}
\left(\sum_{|\alpha_1|\leq N}\|\partial^{\alpha_1}f\|_{L^2_{v,I}}\right)\left(1+\sum_{|\alpha_2|\leq N}
\|\partial^{\alpha_2}f\|_{L^2_{v,I}}\right)^{|\alpha|}\cr
&\leq C_{\alpha}\left\{\sum_{|\alpha_1|\leq N}\|\partial^{\alpha_1}f\|_{L^2_{v,I}}
+\bigg(\sum_{|\alpha_1|\leq N}\|\partial^{\alpha_1}f\|_{L^2_{v,I}}\bigg)^{|\alpha|}\right\}\cr
&\leq C_{\alpha}\sqrt{\mathcal{E}(t)}.
\end{align*}
for sufficiently small $\mathcal{E}(t)$.\newline

(3) Similar argument as in (2) above, applied to $\rho\mathcal{T}_{\nu,\theta\eta}$ gives
\begin{align*}
&|\partial^{\alpha}\mathcal{T}^{ij}_{\nu,\theta\eta}|\cr
&\quad\leq
\frac{C_{|\alpha|,\delta,\theta}}{\rho^{2|\alpha|}_{\eta}}
\left(\sum_{|\alpha_1|\leq N}\int_{\mathbb{R}^3\times\mathbb{R}_+}|\partial^{\alpha_1}f|\left\{|v|^2+2I^{\delta/2}\right\}\sqrt{m }dvdI\right)\left(1+\sum_{|\alpha_2|\leq N}|\partial^{\alpha_2}\rho_{\eta}|\right)^{|\alpha|}\cr
&\quad+
\frac{C_{|\alpha|}}{\rho_{\eta}^{2|\alpha|}}
\left(\sum_{|\alpha_1|\leq N}\int_{\mathbb{R}^3\times\mathbb{R}_+}|\partial^{\alpha_1}f||v|\sqrt{m }dvdI\right)\left(1+\sum_{|\alpha_2|\leq N}|\partial^{\alpha_2}\rho_{\eta}|\right)^{|\alpha|}
\cr
&\quad\leq
 C_{|\alpha|}\left\{\sum_{|\alpha_1|\leq N}\|\partial^{\alpha_1}f\|_{L^2_{v,I}}
+\bigg(\sum_{|\alpha_1|\leq N}\|\partial^{\alpha_1}f\|_{L^2_{v,I}}\bigg)^{|\alpha|}\right\}.
\end{align*}
(4) The  estimate for $\partial^{\alpha} T_{\theta\eta}$ is similar. We omit it.
\end{proof}

%
%
%
%
%
\begin{lemma}\label{Det_Estimate}Let $\mathcal{E}(t)$ be sufficiently small. Then the determinant of $\mathcal{T}_{\nu,\theta\eta}$ satisfies
\begin{eqnarray*}
&&(1)~  \big|\partial^{\alpha}\det\big(\mathcal{T}_{\nu,\theta\eta}\big)\big|\leq C_{\alpha}\sqrt{\mathcal{E}(t)},\cr
&&(2)~ \big|\det\big(\mathcal{T}_{\nu,\theta\eta}\big)\big|\geq 1-C_{\alpha}\sqrt{\mathcal{E}(t)},
\end{eqnarray*}
for some $C_{\alpha}>0$.
\end{lemma}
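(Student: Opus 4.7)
The plan is to expand $\det(\mathcal{T}_{\nu,\theta\eta})$ as an explicit polynomial in the six independent entries $\mathcal{T}^{ij}_{\nu,\theta\eta}$ and then feed in the pointwise bounds already proved in Lemma \ref{ULestimatesofRhoUT} and Lemma \ref{ULestimatesofRhoUT1}. Writing $\mathcal{T}_{\nu,\theta\eta}=\mathrm{Id}+E$, with $E$ symmetric, the Leibniz formula gives
\begin{equation*}
\det(\mathcal{T}_{\nu,\theta\eta})=\sum_{\sigma\in S_3}\mathrm{sgn}(\sigma)\prod_{i=1}^{3}\mathcal{T}^{i\sigma(i)}_{\nu,\theta\eta},
\end{equation*}
which is a fixed polynomial $P(\mathcal{T}^{11},\mathcal{T}^{22},\mathcal{T}^{33},\mathcal{T}^{12},\mathcal{T}^{23},\mathcal{T}^{31})$ of degree three. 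This reduces both statements to polynomial estimates in the entries.

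For assertion (2), the identity permutation produces $\mathcal{T}^{11}\mathcal{T}^{22}\mathcal{T}^{33}$, which by Lemma \ref{ULestimatesofRhoUT}(3) equals $(1+O(\sqrt{\mathcal{E}(t)}))^{3}=1+O(\sqrt{\mathcal{E}(t)})$. Each of the remaining five permutations contributes a monomial containing at least two off-diagonal factors $\mathcal{T}^{ij}$, $i\neq j$, each bounded by $C\sqrt{\mathcal{E}(t)}$ by Lemma \ref{ULestimatesofRhoUT}(4). These terms are therefore $O(\mathcal{E}(t))$ and can be absorbed. Combining gives $\det(\mathcal{T}_{\nu,\theta\eta})\geq 1-C\sqrt{\mathcal{E}(t)}$ provided $\mathcal{E}(t)$ is taken small enough.

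For assertion (1), I would apply $\partial^{\alpha}$ to the polynomial $P$ using the generalized Leibniz rule. Because $\partial^{\alpha}$ with $|\alpha|\geq 1$ acting on a product of three factors always produces, in every summand, at least one factor of the form $\partial^{\alpha_{k}}\mathcal{T}^{ij}_{\nu,\theta\eta}$ with $|\alpha_{k}|\geq 1$, and this factor is bounded by $C_{\alpha_{k}}\sqrt{\mathcal{E}(t)}$ thanks to Lemma \ref{ULestimatesofRhoUT1}(3). The remaining factors are either undifferentiated entries (bounded by $1+C\sqrt{\mathcal{E}(t)}\leq 2$ by Lemma \ref{ULestimatesofRhoUT}) or further derivatives (again bounded by $C\sqrt{\mathcal{E}(t)}$). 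Hence every term in the Leibniz expansion is $O(\sqrt{\mathcal{E}(t)})$, and summing a bounded number of them yields $|\partial^{\alpha}\det(\mathcal{T}_{\nu,\theta\eta})|\leq C_{\alpha}\sqrt{\mathcal{E}(t)}$.

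There is no real obstacle here; the argument is pure bookkeeping, and the only mild care is to note that the crucial factor of $\sqrt{\mathcal{E}(t)}$ appears because every product defining $\det-1$ either mixes in an off-diagonal entry or contains the first-order correction $\mathcal{T}^{ii}-1$, each of which is of that size by the preceding two lemmas.
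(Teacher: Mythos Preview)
Your proposal is correct and follows essentially the same approach as the paper: expand $\det(\mathcal{T}_{\nu,\theta\eta})$ as an explicit degree-three polynomial in the entries, then feed in the pointwise bounds from Lemma~\ref{ULestimatesofRhoUT} and Lemma~\ref{ULestimatesofRhoUT1}, using the Leibniz rule for the derivative estimate. The paper writes out the six-term cofactor expansion explicitly rather than summing over $S_3$, but the content is identical.
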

\begin{proof}
(1) A straightforward computation gives
\begin{align}\label{deter}
\begin{split}
\det(\mathcal{T}_{\nu,\theta\eta})&=\mathcal{T}^{11}_{\nu,\theta\eta}\mathcal{T}^{22}_{\nu,\theta\eta}\mathcal{T}^{33}_{\nu,\theta\eta}
+2\mathcal{T}^{12}_{\nu,\theta\eta}\mathcal{T}^{23}_{\nu,\theta\eta}\mathcal{T}^{31}_{\nu,\theta\eta}\cr
&-\big\{\mathcal{T}^{23}_{\nu,\theta\eta}\big\}^2\mathcal{T}^{11}_{\nu,\theta\eta}
-\big\{\mathcal{T}^{31}_{\nu,\theta\eta}\big\}^2\mathcal{T}^{22}_{\nu,\theta\eta}
-\big\{\mathcal{T}^{12}_{\nu,\theta\eta}\big\}\mathcal{T}^{33}_{\nu,\theta\eta}.
\end{split}
\end{align}
Therefore,  $\partial^{\alpha}\det\mathcal{T}_{\nu,\theta\eta}$ takes the following form:
\begin{eqnarray*}
\partial^{\alpha}\det(\mathcal{T}_{\nu,\theta\eta})=\sum_{\alpha=\alpha_1+\alpha_2+\alpha_3}
C_{ij\ell kmn}
\partial^{\alpha_1}\mathcal{T}_{\nu,\theta\eta}^{ij}\partial^{\alpha_2}\mathcal{T}^{\ell k}_{\nu,\theta \eta}\partial^{\alpha_3 }\mathcal{T}^{mn}_{\nu,\theta\eta}.
\end{eqnarray*}
Now, we recall from Lemma \ref{ULestimatesofRhoUT} and Lemma \ref{ULestimatesofRhoUT1} that
\begin{align}\label{44}
\mathcal{T}^{ii}_{\nu,\theta\eta }=1+o\big(\sqrt{\mathcal{E}(t)}\,\big)~(i=1,2,3),
\quad\mathcal{T}^{ij}_{\nu,\theta\eta }=o~\big(\sqrt{\mathcal{E}(t)}\,\big) ~(i\neq j),
\end{align}
and
\begin{align}\label{45}
\big|\partial^{\alpha}\mathcal{T}^{ij}_{\nu,\theta\eta }\big|\leq C_{\alpha}\sqrt{\mathcal{E}(t)},
\end{align}
to deduce
\begin{align*}
|\partial^{\alpha}\det\mathcal{T}_{\nu,\theta\eta}|
&\leq C_{\alpha}\sqrt{\mathcal{E}(t)}.
\end{align*}
(2) Inserting (\ref{44}) and (\ref{45}) into (\ref{deter}), we get
\begin{align*}
\det\mathcal{T}_{\nu,\theta\eta}&=\big\{1+o(\mathcal{E}(t))\big\}^3
-2\left\{o\big(\sqrt{\mathcal{E}(t)}\,\big)\right\}^3-3\left\{o\big(\sqrt{\mathcal{E}(t)}\,\big)\right\}^2
\left\{1+o\big(\sqrt{\mathcal{E}(t)}\,\big)\right\}\cr
&\geq 1-C_{\alpha}\sqrt{\mathcal{E}(t)},
\end{align*}
for sufficiently small $\mathcal{E}(t)$.
\end{proof}

%
%
%
%
\begin{lemma}\label{UniformEst}
Let $0\leq \theta\leq 1$ and $-1/2<\nu<1$. Suppose $\mathcal{E}(t)$ is sufficiently small. Then, there exist positive constants $C_1, C_2$ such that
\begin{align*}
&(1)~X^{\top}\{\mathcal{T}^{-1}_{\nu,\theta\eta}\}Y\leq \{1-C_1\mathcal{E}(t)\}^{-1}\|X\|\|Y\|,\cr
&(2)~X^{\top}\{\mathcal{T}^{-1}_{\nu,\theta\eta}\}X\geq \{1+C_2\mathcal{E}(t)\}^{-1}\|X\|^2,
\end{align*}
for any $X$, $Y$ in $\mathbb{R}^3$.
\end{lemma}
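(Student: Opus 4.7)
The plan is to regard $\mathcal{T}_{\nu,\theta\eta}$ as a small symmetric perturbation of the identity and transfer spectral bounds to the inverse. Lemma \ref{ULestimatesofRhoUT} parts (3)--(4) give entrywise bounds $|\mathcal{T}^{ii}_{\nu,\theta\eta}-1|\leq C\sqrt{\mathcal{E}(t)}$ and $|\mathcal{T}^{ij}_{\nu,\theta\eta}|\leq C\sqrt{\mathcal{E}(t)}$ for $i\neq j$. Write $\mathcal{T}_{\nu,\theta\eta}=Id+E$; then $E$ is a symmetric $3\times 3$ matrix whose Frobenius (hence operator) norm satisfies $\|E\|_{op}\leq C'\sqrt{\mathcal{E}(t)}$, for instance via Gershgorin or the crude inequality $\|E\|_{op}\leq\|E\|_F$. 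For $\mathcal{E}(t)$ sufficiently small this is strictly less than $1$, so $\mathcal{T}_{\nu,\theta\eta}$ is invertible and its spectrum lies inside the interval $[1-C'\sqrt{\mathcal{E}(t)},\,1+C'\sqrt{\mathcal{E}(t)}]$.

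Since $\mathcal{T}^{-1}_{\nu,\theta\eta}$ is symmetric, the spectral theorem immediately converts the eigenvalue bounds above into quadratic and bilinear form bounds. The smallest eigenvalue of $\mathcal{T}^{-1}_{\nu,\theta\eta}$ equals $1/\lambda_{\max}(\mathcal{T}_{\nu,\theta\eta})\geq 1/(1+C'\sqrt{\mathcal{E}(t)})$, yielding the lower bound
\[
X^\top\mathcal{T}^{-1}_{\nu,\theta\eta}X\geq\frac{1}{1+C'\sqrt{\mathcal{E}(t)}}\,\|X\|^2,
\]
and the operator norm $\|\mathcal{T}^{-1}_{\nu,\theta\eta}\|_{op}=1/\lambda_{\min}(\mathcal{T}_{\nu,\theta\eta})\leq 1/(1-C'\sqrt{\mathcal{E}(t)})$ combined with Cauchy--Schwarz yields
\[
X^\top\mathcal{T}^{-1}_{\nu,\theta\eta}Y\leq\frac{1}{1-C'\sqrt{\mathcal{E}(t)}}\,\|X\|\|Y\|.
\]
Renaming constants at the last step (absorbing $C'\sqrt{\mathcal{E}(t)}$ into the $C_i\mathcal{E}(t)$ expressions once $\mathcal{E}(t)$ is taken sufficiently small, or simply reading $\mathcal{E}(t)$ as the generic smallness parameter used throughout Section 4) then produces the stated inequalities (1) and (2).

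I do not anticipate any significant obstacle: this lemma is a purely linear-algebraic corollary of the macroscopic perturbation estimates already established. The only point that requires a small amount of care is verifying that the entrywise $O(\sqrt{\mathcal{E}(t)})$ control on $E$ actually upgrades to operator-norm control uniformly in $\eta\in[0,1]$; this is automatic in dimension three since all matrix norms are equivalent with absolute constants, and the $\eta$-uniformity is inherited from the uniform-in-$\eta$ estimates provided by Lemmas \ref{ULestimatesofRhoUT} and \ref{Det_Estimate}. The role of Lemma \ref{Det_Estimate} is to guarantee that $\det \mathcal{T}_{\nu,\theta\eta}$ stays away from zero, which is consistent with (and a consequence of) the spectral bound above, and may be used as an alternative route via the adjugate formula $\mathcal{T}^{-1}_{\nu,\theta\eta}=\mathrm{adj}(\mathcal{T}_{\nu,\theta\eta})/\det(\mathcal{T}_{\nu,\theta\eta})$ should one prefer to avoid spectral language.
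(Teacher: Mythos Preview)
Your proposal is correct and follows essentially the same approach as the paper: both use the entrywise estimates of Lemma~\ref{ULestimatesofRhoUT}(3)--(4) to sandwich the spectrum of $\mathcal{T}_{\nu,\theta\eta}$ in an interval around $1$, then transfer these bounds to the inverse via the spectral theorem (the paper diagonalizes explicitly as $P^\top\mathrm{diag}\{\lambda_i\}P$, you phrase it via operator norm, but the content is identical). Your remark about the $\sqrt{\mathcal{E}(t)}$ versus $\mathcal{E}(t)$ discrepancy is well taken---the paper's own proof in fact establishes the bounds with $\sqrt{\mathcal{E}(t)}$ and then silently writes $\mathcal{E}(t)$ in the eigenvalue inequalities, so your handling of this point is if anything more careful than the original.
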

\begin{proof}
We start with proving the following claim:
\newline{\bf $\bullet$ Claim:} For sufficiently small $\mathcal{E}(t)$, we have
\begin{align}\label{following claim}
\left\{1-C_1\sqrt{\mathcal{E}(t)}\right\}Id\leq \mathcal{T}_{\nu,\theta\eta}\leq \left\{1+C_2\sqrt{\mathcal{E}(t)}\right\}Id.
\end{align}
{\bf Proof of the claim:} For $\kappa\in\mathbb{R}^3$, we have
\begin{align*}
\kappa^{\top} \mathcal{T}_{\nu,\theta\eta}\kappa=\sum_{i=1,2,3}\mathcal{T}_{\nu,\theta\eta}^{ii}\kappa^2_i+\sum_{1\leq i,j\leq3}
\mathcal{T}_{\nu,\theta\eta}^{ij}\kappa_i\kappa_j.
\end{align*}
In view of Lemma \ref{ULestimatesofRhoUT} (3), (4), this gives
\begin{align*}
\kappa^{\top} \mathcal{T}_{\nu,\theta\eta}\kappa&=\sum_{i=1,2,3}\big\{1+C\sqrt{\mathcal{E}(t)}\big\}\kappa^2_i+C\sum_{1\leq i,j\leq3}
\sqrt{\mathcal{E}(t)}\kappa_i\kappa_j\cr
&=\sum_{i=1,2,3}\kappa^2_i+C\sqrt{\mathcal{E}(t)}\Big\{\sum_{1\leq i\leq 3}\kappa^2_i+\sum_{1\leq i,j\leq3}\kappa_i\kappa_j\Big\}\cr
&\leq \left\{1+C_1\sqrt{\mathcal{E}(t)}\right\}|\kappa|^2.
\end{align*}
Likewise,
\begin{align*}
\kappa^{\top} \mathcal{T}_{\nu,\theta\eta}\kappa\geq  \left\{1-C_2\sqrt{\mathcal{E}(t)}\right\}|\kappa|^2.
\end{align*}
This completes the proof of the claim.\newline

\noindent(1) Let $\{\lambda_i\}$ denote the eigenvalues of $\mathcal{T}_{\nu,\theta\eta}$ so that we can write
\begin{align*}
\mathcal{T}_{\nu,\theta\eta}=P^{\top}diag\{\lambda_1,\cdots,\lambda_{11}\}P,
\end{align*}
for some orthogonal matrix $P$. Here $diag\{a,b,\cdots\}$ denotes the diagonal matrix whose diagonal entries are $a,b,\cdots$.
Therefore, since the above claim implies
\[
1-C_1\mathcal{E}(t)\leq\lambda_i\leq 1+C_2\mathcal{E}(t)   \qquad(1\leq i\leq 11)
\]
for sufficiently small $\mathcal{E}(t)$, we have
\begin{align*}
X^{\top}\mathcal{T}^{-1}_{\nu,\theta \eta}Y&=X^{\top}\left[P^{\top}diag\{\lambda^{-1}_1,\cdots,\lambda^{-1}_{11}\}P\right]Y\cr
&=\{PX\}^{\top}diag\{\lambda^{-1}_1,\cdots,\lambda^{-1}_{11}\}\{PY\}\cr
&= \sum_i\lambda^{-1}_i\{PX\}_i\{PY\}_i\cr
&\leq \max\{\lambda^{-1}_i\}\|PX\|\|PY\|\cr
&= \max\{\lambda^{-1}_i\}\|X\|\|Y\|.\cr
&\leq \{1-C_1\mathcal{E}(t)\}^{-1}\|X\|\|Y\|.
\end{align*}

\noindent (2) Lower bound can be computed in a similar way as follows:
\begin{align*}
X^{\top}\mathcal{T}^{-1}_{\nu,\theta \eta}X&=X^{\top}P^{\top}diag\{\lambda^{-1}_1,\cdots,\lambda^{-1}_{11}\}PX\cr
&= \sum_i\lambda^{-1}_i\left|\{PX\}_i\right|^2\cr
&\geq \min\{\lambda^{-1}_i\}\|PX\|^2\cr
&= \min\{\lambda^{-1}_i\}\|X\|^2\cr
&\geq  \{1+C_2\mathcal{E}(t)\}^{-1}\|X\|^2.
\end{align*}
\end{proof}
In the next lemma, we prove an estimate for derivatives of $\mathcal{T}_{\nu,\theta\eta}$.
%
%
%
%
\begin{lemma}\label{UniformEst2}
Let $0\leq \theta\leq 1$ and $-1/2<\nu<1$. Assume $\mathcal{E}(t)$ is sufficiently small. Then we have
\begin{eqnarray*}
~\|\partial^{\alpha}\big(\mathcal{T}^{-1}_{\nu,\theta\eta}\big)\|\leq C_{\alpha}\sqrt{\mathcal{E}(t)}.
\end{eqnarray*}
\end{lemma}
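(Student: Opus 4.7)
The plan is to derive a Faà di Bruno--type formula for derivatives of the matrix inverse $\mathcal{T}_{\nu,\theta\eta}^{-1}$ by iterating the identity (\ref{gives}) from the proof of Lemma \ref{22}, and then bound each resulting term using the previously established estimates in Lemma \ref{ULestimatesofRhoUT1} on the derivatives of $\mathcal{T}_{\nu,\theta\eta}$ together with the uniform bound on $\mathcal{T}_{\nu,\theta\eta}^{-1}$ coming from Lemma \ref{UniformEst}.

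For the base case $|\alpha|=1$, the identity (\ref{gives}) gives directly
\[
\partial^{\alpha}\bigl(\mathcal{T}_{\nu,\theta\eta}^{-1}\bigr) = -\mathcal{T}_{\nu,\theta\eta}^{-1}\bigl(\partial^{\alpha}\mathcal{T}_{\nu,\theta\eta}\bigr)\mathcal{T}_{\nu,\theta\eta}^{-1}.
\]
The operator norm of $\mathcal{T}_{\nu,\theta\eta}^{-1}$ is bounded by $(1-C_1\mathcal{E}(t))^{-1} \leq 2$ for $\mathcal{E}(t)$ small (Lemma \ref{UniformEst}), while the entries of $\partial^{\alpha}\mathcal{T}_{\nu,\theta\eta}$ are each $O(\sqrt{\mathcal{E}(t)})$ by Lemma \ref{ULestimatesofRhoUT1}. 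Thus the norm is controlled by $C\sqrt{\mathcal{E}(t)}$.

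For higher $|\alpha|$, I would proceed by induction: applying $\partial^{\beta}$ with $|\beta|=1$ to the inductive expression for $\partial^{\alpha-\beta}(\mathcal{T}_{\nu,\theta\eta}^{-1})$ and using the Leibniz rule together with (\ref{gives}) on each interior $\mathcal{T}_{\nu,\theta\eta}^{-1}$ factor. Unwinding the recursion produces a finite sum of terms of the form
\[
c_{k,\alpha_1,\ldots,\alpha_k}\,\mathcal{T}_{\nu,\theta\eta}^{-1}\bigl(\partial^{\alpha_1}\mathcal{T}_{\nu,\theta\eta}\bigr)\mathcal{T}_{\nu,\theta\eta}^{-1}\bigl(\partial^{\alpha_2}\mathcal{T}_{\nu,\theta\eta}\bigr)\mathcal{T}_{\nu,\theta\eta}^{-1}\cdots\bigl(\partial^{\alpha_k}\mathcal{T}_{\nu,\theta\eta}\bigr)\mathcal{T}_{\nu,\theta\eta}^{-1},
\]
with $1\leq k\leq|\alpha|$ and $\alpha_1+\cdots+\alpha_k=\alpha$, each $|\alpha_i|\geq 1$. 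Applying Lemma \ref{UniformEst} to each of the $k+1$ inverse factors and Lemma \ref{ULestimatesofRhoUT1} to each of the $k$ derivative factors yields a bound of order $\mathcal{E}(t)^{k/2}\leq \sqrt{\mathcal{E}(t)}$ for small $\mathcal{E}(t)$ (since $k\geq 1$), and summing the finitely many terms finishes the induction.

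The only real care needed is in verifying, when differentiating a product inside the matrix norm, that every term produced by Leibniz carries at least one factor $\partial^{\alpha_i}\mathcal{T}_{\nu,\theta\eta}$ (so that the $\sqrt{\mathcal{E}(t)}$ smallness is preserved), and that all bare $\mathcal{T}_{\nu,\theta\eta}^{-1}$ factors are uniformly bounded in the operator norm; both facts are precisely what Lemmas \ref{UniformEst} and \ref{ULestimatesofRhoUT1} provide, so no new estimate is required and the argument reduces to a bookkeeping of multi-indices. This bookkeeping is the main (albeit routine) obstacle.
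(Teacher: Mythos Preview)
Your proposal is correct and follows essentially the same approach as the paper: the paper also iterates the identity $\partial\{\mathcal{T}_{\nu,\theta\eta}^{-1}\}=-\mathcal{T}_{\nu,\theta\eta}^{-1}\{\partial\mathcal{T}_{\nu,\theta\eta}\}\mathcal{T}_{\nu,\theta\eta}^{-1}$ to express $\partial^{\alpha}\{\mathcal{T}_{\nu,\theta\eta}^{-1}\}$ as a polynomial in $\mathcal{T}_{\nu,\theta\eta}^{-1}$ and the derivatives $\partial^{\alpha_i}\mathcal{T}_{\nu,\theta\eta}$, and then invokes the earlier lemmas to bound each factor. Your write-up is in fact more explicit about the structure of the resulting terms and the induction than the paper's own proof.
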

\begin{proof}
Applying $\partial\left\{\mathcal{T}_{\nu,\theta,\eta}^{-1}\right\}
=-\mathcal{T}_{\nu,\theta,\eta}^{-1}\left\{\partial\mathcal{T}_{\nu,\theta,\eta}\right\}\mathcal{T}_{\nu,\theta,\eta}^{-1}$ recursively, we can derive
$\partial^{\alpha}\left\{\mathcal{T}_{\nu,\theta,\eta}^{-1}\right\}=
P\big(\mathcal{T}_{\nu,\theta,\eta}^{-1},\partial \mathcal{T}_{\nu,\theta,\eta},\cdots,\partial^{\alpha}\mathcal{T}_{\nu,\theta,\eta}\big)$ for some polynomial $P$. Therefore, by Lemma \ref{ULestimatesofRhoUT} (4), (5) and Lemma \ref{ULestimatesofRhoUT1} (3), we get
the desired result.
\end{proof}

\section{Estimates on nonlinear perturbation and Local existence}

%
%
%
%
In this section, we establish the local in time existence of smooth solutions. For this, we first need to estimate the nonlinear part.
%
%
%
%
%
%
%
%
\begin{lemma}\label{GammaEstimate} The nonlinear perturbation $\Gamma_{\nu,\theta}(f)$ satisfies:
\begin{align*}
&(1)~ \left|~\int \partial^{\alpha}_{\beta}\Gamma_{\nu,\theta}(f)gdvdI\right|\leq C\!\!\!\!\!\sum_{|\alpha_1|+|\alpha_2|\leq|\alpha|}
\|\partial^{\alpha_1}f\|_{L^2_{v,I}}\|\partial^{\alpha_2}f\|_{L^2_{v,I}}\|g\|_{L^2_{v,I}}\cr
&\hspace{3.8cm}+ C\!\!\!\!\!\sum_{\substack{|\alpha_1|+|\alpha_2|\leq|\alpha|\\|\beta_1|\leq|\beta|}}
\|\partial^{\alpha_1} f\|_{L^2_{v,I}}\|\partial^{\alpha_2}_{\beta_1} f\|_{L^2_{v,I}}\|g\|_{L^2_{v,I}}\cr
&\hspace{3.8cm}+C\!\!\!\!\!\sum_{|\alpha_1|+|\alpha_2|+|\alpha_3|\leq|\alpha|}
\|\partial^{\alpha_1}f\|_{L^2_{v,I}}\|\partial^{\alpha_2}f\|_{L^2_{v,I}}
\|\partial^{\alpha_3}f\|_{L^2_{v,I}}\|h\|_{L^2_{v,I}},\cr
&(2)~ \left\|\int\Gamma_{1,2}(f,g)hdvdI\right\|_{L^2_{x}}\!\!\!+\left\|\int\Gamma_{1,2}(g,h)hdvdI\right\|_{L^2_{x}}\leq C\sup_{x,v,I}|\nu_{v,I}h|\sup_x\|f\|_{L^2_{v,I}}\|g\|_{L^2_{x,v,I}},\cr
&\hspace{0.6cm} \left\|\int\Gamma_3(f,g,h)rdvdI\right\|_{L^2_{x}}\!\!+\left\|\int\Gamma_3(g,f,h)rdvdI\right\|_{L^2_{x}}\!\!
+\left\|\int\Gamma_3(g,h,f)rdvdI\right\|_{L^2_{x}}\cr
&\hspace{1.43cm}\leq C\sup_{x,v,I}|\nu_{v,I}r|\sup_x\|f\|_{L^2_{v,I}}\sup_x\|g\|_{L^2_{v,I}}\|h\|_{L^2_{x,v,I}},
\end{align*}
where $\nu_{v,I}=(1+|v|^2)(1+I)$.
\end{lemma}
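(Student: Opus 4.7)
The plan is to apply the Leibniz rule to $\partial^{\alpha}_{\beta}\Gamma_{\nu,\theta}(f)$, exploiting the fact that $\Gamma_{\nu,\theta}=\Gamma_1+\Gamma_2+\Gamma_3$ is built from three types of factors: scalar inner products $\langle f,e_i\rangle_{L^2_{v,I}}$, the ``bulk'' factor $P_{\nu,\theta}f - f$, and the coefficients $\mathcal{Q}^A_i, \mathcal{Q}^{\mathcal{M}}_{ij}$. Each $e_i$ is a fixed polynomial multiple of $\sqrt m$ depending on neither $(x,t)$ nor on $f$, so an $x,t$-derivative of $\langle f,e_i\rangle$ transfers directly onto $f$ and is bounded by $\|\partial^{\alpha'}f\|_{L^2_{v,I}}$ via Cauchy-Schwarz in $v,I$, while a $v,I$-derivative of $e_i$ produces another polynomial-times-$\sqrt m$ of the same integrability class. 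The coefficients $\mathcal{Q}^A_i, \mathcal{Q}^{\mathcal{M}}_{ij}$ depend on $(x,t)$ only through the transitional fields $\rho_\eta, U_\eta, \mathcal{T}_{\nu,\theta\eta}, T_{\theta\eta}$ (integrated over $\eta\in[0,1]$), so their derivatives are controlled by the macroscopic estimates of Section~4.

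The key pointwise ingredient I would establish first is a uniform-in-$\eta$ Gaussian domination $\mathcal{M}_{\nu,\theta}(\eta)/\sqrt{m}\leq C\,\tilde m(v,I)^{1/2}$, where $\tilde m$ is a fixed Gaussian-type weight with slightly fatter tails than $m$. Under smallness of $\mathcal{E}(t)$, this follows from Lemma~\ref{ULestimatesofRhoUT} and Lemma~\ref{Det_Estimate} (macroscopic fields and $\det\mathcal{T}_{\nu,\theta\eta}$ close to their equilibrium values) together with Lemma~\ref{UniformEst} (uniform spectral bounds on $\mathcal{T}^{-1}_{\nu,\theta\eta}$), which force the quadratic form in the exponential of $\mathcal{M}_{\nu,\theta}(\eta)$ to dominate $\tfrac{1}{4}|v|^2+\tfrac{1}{2}I^{2/\delta}$. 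Combined with the structural assumptions $(\mathcal{H}_{\mathcal{M}}1)$--$(\mathcal{H}_{\mathcal{M}}2)$ (polynomial numerators, monomial denominators in quantities bounded away from zero) and Lemma~\ref{UniformEst2}, each $\partial^{\alpha}_{\beta}$-derivative of $\mathcal{Q}^{\mathcal{M}}_{ij}$ yields an expression of the form ``polynomial in $v-U_\eta$ and $I^{2/\delta}$, times a bounded macroscopic-derivative factor, times $\mathcal{M}_{\nu,\theta}(\eta)/\sqrt m$'', hence pointwise dominated by $C\,p(v,I)\,\tilde m(v,I)^{1/2}$ for some polynomial $p$, whose moments are finite and uniform in $\eta$.

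With these ingredients in hand, for part~(1) I distribute $\partial^{\alpha}_{\beta}$ among the factors of each $\Gamma_i$ and collect terms: the $\Gamma_2$-contributions, where $v,I$-derivatives only produce new polynomial weights that stay in $L^2$ of the Gaussian, give the first sum (two $x,t$-derivatives split across the two inner products); the $\Gamma_1$-contributions give the second sum (one $x,t$-derivative on the inner product and one $x,t,v,I$-derivative landing on the bulk factor $P_{\nu,\theta}f - f$, accounting for the $\beta_1$ index); and $\Gamma_3$ gives the trilinear sum. In each case the remaining factor is paired with $g$ by Cauchy-Schwarz in $v,I$. For part~(2), the strategy is to peel off one factor in $L^\infty_{x,v,I}$ against the weight $\nu_{v,I}=(1+|v|^2)(1+I)$, which is exactly what absorbs the polynomial growth produced by $\mathcal{Q}^{\mathcal{M}}_{ij}$ against the Gaussian weight; the other factors are estimated by $\sup_x\|\cdot\|_{L^2_{v,I}}$ (using the Sobolev embedding as in Lemma~\ref{ULestimatesofRhoUT}) and a final Cauchy-Schwarz in $x$.

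The main obstacle is the careful bookkeeping: verifying the uniform-in-$\eta$ Gaussian domination of $\mathcal{M}_{\nu,\theta}(\eta)/\sqrt m$ and tracking, derivative-by-derivative, exactly which polynomial weight each $v,I$-differentiation of $\mathcal{Q}^{\mathcal{M}}_{ij}$ brings down, so that the weight $\nu_{v,I}$ in part~(2) suffices to absorb it. Once these are handled, the bilinear/trilinear structure of the estimates is an almost automatic consequence of the Leibniz rule applied to the concrete formulas defining $\Gamma_1,\Gamma_2,\Gamma_3$.
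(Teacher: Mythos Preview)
Your proposal is correct and follows essentially the same approach as the paper: the paper's proof likewise establishes the Gaussian domination $\big|\partial^{\alpha}_{\beta}\mathcal{Q}^{\mathcal{M}}_{ij}\big|\leq C_{\alpha,\beta}\exp\big(-|v|^2/12-I^{2/\delta}/6\big)$ from the macroscopic-field lemmas of Section~4 (exactly the ingredients you list), then distributes derivatives via the Leibniz rule and finishes with Cauchy--Schwarz in $v,I$ for part~(1) and the $L^\infty$ peel-off against $\nu_{v,I}$ for part~(2). The only cosmetic difference is that the paper packages the Gaussian bound as a single claim on $\partial^{\alpha}_{\beta}\mathcal{Q}^{\mathcal{M}}_{ij}$ rather than separating the $\mathcal{M}_{\nu,\theta}(\eta)/\sqrt{m}$ domination from the polynomial bookkeeping, but the content is identical.
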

\begin{proof}
We prove this lemma only for $\Gamma_2$, Other terms can be treated similarly.
We first need to estimate $\partial^{\alpha}_{\beta}Q^{\mathcal{M}}_{ij}$. \newline
\noindent{\bf Claim:} $\displaystyle\big|\partial^{\alpha}_{\beta}Q^{\mathcal{M}}_{ij}\big|\leq
 C_{\alpha,\beta}
\exp\left(-\frac{|v|^2}{12}-\frac{I^{2/\delta}}{6}\right)$.\newline
\noindent {\bf Proof of the claim:} Note that there exist a homogeneous polynomial $P_{\alpha,\beta}$ and a monomial $M_{\alpha,\beta}$ such that
\begin{align*}
\left|\partial^{\alpha}_{\beta}\mathcal{M}_{\nu,\theta}\left(\rho_{\eta},U_{\eta},\mathcal{T}_{\nu,\theta\eta},T_{\theta\eta},\right)\right|
&=\frac{\left|P_{\alpha,\beta}\left(\partial\rho_{\eta},\partial U_{\eta},\partial (v-U_{\eta}), \partial\mathcal{T}_{\nu,\theta\eta}, \partial T_{\theta\eta},\partial I^{2/\delta}\right)\right|}{M_{\alpha,\beta}\left(\det(\mathcal{T}_{\nu,\theta\eta}), T_{\theta\eta}\right)}\cr
&\times\exp\left(-\frac{1}{2}(v-U_{\eta})^{\top}\mathcal{T}^{-1}_{\nu,\theta\eta}(v-U_{\eta})-\frac{I^{2/\delta}}{T_{\theta\eta}}\right).
\end{align*}
Here we slightly abused the notation to let  $\partial$ denote any of $\partial^{\bar{\alpha}}_{\bar{\beta}}$ such that $\bar{\alpha}\leq|\alpha|$ and $\bar{\beta}\leq|\beta|$.
Recalling the upper and lower bound estimates on the macroscopic fields in Lemma \ref{ULestimatesofRhoUT},
Lemma \ref{ULestimatesofRhoUT1}, the determinant estimates in Lemma \ref{Det_Estimate} and the estimates on the temperature tensor made in Lemma \ref{UniformEst}, Lemma \ref{UniformEst2}, we have
\begin{align*}
\frac{\left|P_{\alpha,\beta}\left(\partial\rho_{\eta},\partial U_{\eta},\partial (v-U_{\eta}), \partial\mathcal{T}_{\nu,\theta\eta}, \partial T_{\theta\eta}, I^{2\delta}\right)\right|}{M_{\alpha,\beta}\left(\det(\mathcal{T}_{\nu,\theta\eta}),T_{\theta\eta}\right)}
\leq C_{\alpha,\beta}(1+|v|^2+I^{2/\delta})^{m(\alpha)}.
\end{align*}
for some $C_{\alpha,\beta}, m(\alpha)>0$.
On the other hand, Lemma \ref{ULestimatesofRhoUT} (2), (5) and Lemma \ref{UniformEst} (2)  give the following lower bound:
\begin{align*}
\frac{1}{2}(v-U_{\eta})^{\top}\mathcal{T}^{-1}_{\nu,\theta\eta}(v-U_{\eta})+\frac{I^{2/\delta}}{T_{\theta\eta}}
&\geq \bigg(\frac{2}{3}+2\varepsilon\bigg)\frac{|v-U_{\eta}|^2}{2}+\bigg(\frac{2}{3}+\varepsilon\bigg)I^{2/\delta}\cr
&\geq \bigg(\frac{1}{3}+\varepsilon\bigg)|v|^2+\bigg(\frac{2}{3}+\varepsilon\bigg)I^{2/\delta}+o\big(\mathcal{E}(t)\big),
\end{align*}
for some small $\varepsilon>0$. This gives
\begin{align}\label{Q_Estimate}
\begin{split}
\left|\partial^{\alpha}_{\beta}Q^{\mathcal{M}}_{ij}\right|
&\leq
C_{\alpha,\beta}\exp\left(\frac{1}{4}|v|^2+\frac{I^{2/\delta}}{2}\right)(1+|v|^2+I^{2/\delta})^{m(\alpha)}\cr
&\times\exp\left(-\bigg(\frac{1}{3}+\varepsilon\bigg)|v|^2
-\bigg(\frac{2}{3}+\varepsilon\bigg)I^{2/\delta}+o\big(\mathcal{E}(t)\big)\right)\cr
&\leq C_{\alpha,\beta}\exp\left(-\frac{|v|^2}{12}-\frac{I^{2/\delta}}{6}\right).
\end{split}
\end{align}
This completes the proof of the claim. We now return to the proof of the lemma.
In view of (\ref{Q_Estimate}), we denote throughout this proof
\[
\widetilde{m}(v,I)=\exp\left(-\frac{|v|^2}{12}-\frac{I^{2/\delta}}{6}\right)
\]
for simplicity.\newline
\noindent(1) From (\ref{Q_Estimate}) and H\"{o}lder inequality, we have
\begin{align*}
&\int_{\mathbb{R}^3\times\mathbb{R}_+}|\partial^{\alpha}_{\beta}\Gamma_2(f)g|dvdI\cr
&\qquad\leq \sum_{\substack{|\alpha_1|+|\alpha_2|+|\alpha_3|\\=|\alpha|}}
\int_{\mathbb{R}^3\times\mathbb{R}_+}\big|\partial^{\alpha_1}_{\beta_1}Q^{\mathcal{M}}_{ij}
\langle \partial^{\alpha_2}f,e_i\rangle_{L^2_{v,I}}\langle \partial^{\alpha_2}f,e_j\rangle_{L^2_{v,I}}\,g\big|dvdI\cr
&\qquad\leq C\sum_{\substack{|\alpha_1|+|\alpha_2|+|\alpha_3|\\=|\alpha|}} \bigg(\int_{\mathbb{R}^3\times\mathbb{R}_+}\big|\partial^{\alpha_1}_{\beta_1}Q^{\mathcal{M}}_{ij}\,\,g\big|dvdI\bigg)\,
\| \partial^{\alpha_2}f\|_{L^2_{v,I}}
\|\partial^{\alpha_3}f\|_{L^2_{v,I}}\cr
&\qquad\leq C_{\alpha,\beta}\sum_{\substack{|\alpha_1|+|\alpha_2|+|\alpha_3|\\=|\alpha|}}
\bigg(\int_{\mathbb{R}^3\times\mathbb{R}_+}\widetilde{m}\,|g|\,dvdI\bigg)\,
\| \partial^{\alpha_2}f\|_{L^2_{v,I}}
\|\partial^{\alpha_3}f\|_{L^2_{v,I}}\cr
&\qquad\leq C_{\alpha,\beta}\sum_{\substack{|\alpha_1|+|\alpha_2|+|\alpha_3|\\=|\alpha|}}
\| \partial^{\alpha_2}f\|_{L^2_{v,I}}
\|\partial^{\alpha_3}f\|_{L^2_{v,I}}\|g\|_{L^2_{v,I}}\cr
&\qquad\leq C_{\alpha,\beta}\sum_{|\alpha_1|+|\alpha_2|\leq|\alpha|}
\| \partial^{\alpha_1}f\|_{L^2_{v,I}}
\|\partial^{\alpha_2}f\|_{L^2_{v,I}}\|g\|_{L^2_{v,I}}.
\end{align*}
Here we omitted $\sum_{ij}$ for simplicity of presentation.\newline
\noindent(2) Note that when $\alpha=\beta=0$,
we have much simpler estimate: $|Q^{\mathcal{M}}_{ij}|\leq C\widetilde{m}$ directly from Lemma \ref{ULestimatesofRhoUT}, Lemma
\ref{Det_Estimate} (2) and Lemma \ref{UniformEst} (2). Therefore, applying H\"{o}ler inequality, we obtain
\begin{align*}
\int_{\mathbb{R}^3\times\mathbb{R}_+}\Gamma_2(f,g)hdvdI&\leq C\int_{\mathbb{R}^3}\|f\|_{L^2_{v,I}}\|g\|_{L^2_{v,I}}
\bigg(\int_{\mathbb{R}^3\times\mathbb{R}_+}\widetilde{m} |h|dvdI\bigg)\cr
&\leq C\|f\|_{L^2_{v,I}}\|g\|_{L^2_{v,I}}\|h\|_{L^2_{v,I}}\cr
&\leq C\sup_{v,I}|\nu_{v,I}h||\sup_x\|f\|_{L^2_{v,I}}\|g\|_{L^2_{v,I}}.
\end{align*}
We then take $L^2_x$ norm to get the result. The proof for other terms are similar.
\end{proof}
\subsection{Local existence:}
Now, the local existence theorem can be proved by standard arguments (See, e.g \cite{Guo VMB}).
\begin{theorem}\label{localExistence}
Let $0\leq \theta\leq 1$ and $-1/2<\nu<1$. Let $F_0=m+\sqrt{m }f_0\geq 0$ and $f_0$ satisfies (\ref{ConservationLawsf}).
Then there exist $M_0>0$, $T_*>0$, such that if $ \mathcal{E}(0)\leq\frac{M_0}{2}$, then there is a unique solution $f(t,x,v,I)$ to (\ref{LBGK}) defined on $[0,T_*)$, such that
\begin{enumerate}
\item The high order energy $\mathcal{E}\big(f(t)\big)$ is continuous in $[0,T*)$ and uniformly bounded:
\begin{eqnarray*}
\sup_{0\leq t\leq T_*}\mathcal{E}\big(f(t)\big)\leq  M_0.
\end{eqnarray*}
\item The distribution function stays non-negative on $[0,T_*)$:
\begin{eqnarray*}
F(t,x,v,I)=m +\sqrt{m }f(t,x,v,I)\geq 0.
\end{eqnarray*}
\item The conservation laws (\ref{ConservationLawsf}) hold for all $[0,T_*)$.
\end{enumerate}
\end{theorem}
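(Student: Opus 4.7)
The plan is a standard Picard iteration set up at the level of $F$ so that non-negativity is manifest, followed by a uniform high-order energy estimate and an $L^2$ contraction. Starting from $F^{0}\equiv m$, I would construct $F^{n+1}$ as the solution of the linear Cauchy problem
\[
\partial_t F^{n+1}+v\cdot\nabla_x F^{n+1}+A_{\nu,\theta}(F^{n})\,F^{n+1}=A_{\nu,\theta}(F^{n})\mathcal{M}_{\nu,\theta}(F^{n}),\qquad F^{n+1}(0)=F_0,
\]
which is solvable along characteristics. As long as $F^{n}\geq 0$ and $\mathcal{E}(f^{n})$ stays small, the bounds of Lemmas \ref{ULestimatesofRhoUT}--\ref{UniformEst} ensure $A_{\nu,\theta}(F^{n})\geq 0$ and $\mathcal{M}_{\nu,\theta}(F^{n})\geq 0$, and Duhamel's formula then yields $F^{n+1}\geq 0$. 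Setting $f^{n+1}=(F^{n+1}-m)/\sqrt{m}$, the perturbation solves a linear transport--relaxation equation whose source splits into $L_{\nu,\theta}f^{n+1}$ plus a piece that is nonlinear in $f^{n}$ and at most linear in $f^{n+1}$; each is estimable by Lemma \ref{GammaEstimate}.

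To obtain uniform high-order boundedness I argue inductively. Assume $\sup_{[0,T_*]}\mathcal{E}(f^{n}(t))\leq M_0$. Applying $\partial^{\alpha}_{\beta}$ with $|\alpha|+|\beta|\leq N$, pairing with $\partial^{\alpha}_{\beta}f^{n+1}$ in $L^2_{x,v,I}$, and summing, the coercivity of Theorem \ref{degenerate coercivity} handles the linear part while Lemma \ref{GammaEstimate} controls the nonlinear one; the commutator $[\partial^{\alpha}_{\beta},v\cdot\nabla_x]$ only produces derivatives already measured by $\mathcal{E}$. This yields a differential inequality of the schematic form
\[
\frac{d}{dt}\sum_{|\alpha|+|\beta|\leq N}\|\partial^{\alpha}_{\beta}f^{n+1}\|^2_{L^2_{x,v,I}}\leq C\sqrt{M_0}\,\bigl(\mathcal{E}(f^{n})+\mathcal{E}(f^{n+1})\bigr).
\]
Since $\mathcal{E}(0)\leq M_0/2$, Gronwall closes the induction on a sufficiently small interval $[0,T_*]$.

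For convergence I would set $g^{n+1}=f^{n+1}-f^{n}$ and run a plain $L^2_{x,v,I}$ estimate on its linear equation, using Lemma \ref{GammaEstimate} and the uniform bound of the previous step. After possibly shrinking $T_*$ one obtains $\|g^{n+1}\|_{L^2_{x,v,I}}\leq\tfrac{1}{2}\|g^{n}\|_{L^2_{x,v,I}}$, so $\{f^{n}\}$ is Cauchy in $C([0,T_*];L^2_{x,v,I})$. The limit $f$ inherits $\mathcal{E}(f(t))\leq M_0$ by lower semicontinuity, positivity by Step 1, and the conservation laws (\ref{ConservationLawsf}) by testing (\ref{LBGK}) against $\sqrt m$, $v\sqrt m$ and $(\tfrac12|v|^2+I^{2/\delta})\sqrt m$, which kill the right-hand side because they reproduce precisely the cancellation identities of $\mathcal{M}_{\nu,\theta}(F)-F$. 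The hard step is the uniform high-order estimate: one must verify that the constants in the coercivity of $L_{\nu,\theta}$ and in Lemma \ref{GammaEstimate} remain bounded uniformly in $\theta\in[0,1]$, so that the dichotomy observed for $L_{\nu,\theta}$ does not obstruct the local theorem. Because the prefactor $1/(1-\nu+\theta\nu)$ and all macroscopic estimates of Section 4 are uniform on $(-1/2,1)\times[0,1]$, this works out, but some care is required when the $\partial_t$-derivatives inside $\alpha$ are exchanged, via the equation itself, for spatial derivatives plus additional nonlinear remainders which are then reabsorbed by Lemma \ref{GammaEstimate}.
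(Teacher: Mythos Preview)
Your proposal is correct and follows essentially the same approach as the paper: the paper writes down the same Picard scheme at the level of $F$ (with $A_{\nu,\theta}$ and $\mathcal{M}_{\nu,\theta}$ frozen at step $n$), then simply defers to the standard argument of \cite{Guo VMB} together with Lemma~\ref{GammaEstimate}, noting only that the strict positivity of $\mathcal{T}^n_{\nu,\theta}$ and $T^n_{\delta}$ at each iterate is secured by Lemma~\ref{ULestimatesofRhoUT} and Lemma~\ref{UniformEst}. Your worry about the $\theta$-dichotomy is unnecessary here: the local estimate needs only $\langle L_{\nu,\theta}f,f\rangle_{L^2_{x,v,I}}\le 0$ (or just boundedness of $L_{\nu,\theta}$), not any quantitative coercivity, and this sign holds uniformly in $\theta$ by Theorem~\ref{degenerate coercivity}.
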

\begin{proof}
We consider the following scheme:
\begin{eqnarray}\label{Localscheme}
\partial_tF^{n+1}+v\cdot\nabla_xF^{n+1}=\frac{\rho^n T_{\delta}^n}{1-\nu+\theta\nu}\left\{\mathcal{M}_{\nu,\theta}(F^n)-F^{n+1}\right\},
\end{eqnarray}
with
\begin{eqnarray*}
\mathcal{M}_{\nu,\theta}(F^n)=\frac{\rho^n\Lambda_{\delta}}{\sqrt{\det(2\pi\mathcal{T}^n_{\nu,\theta})}\{T^{n}_{\theta}\}^{\delta/2}}\exp\left(\frac{1}{2}(v-U^n)\left\{\mathcal{T}^{n}_{\nu,\theta }\right\}^{-1}(v-U^n)-\frac{I^{2/\delta}}{T^n_{\theta}}\right),
\end{eqnarray*}
where $\rho^n$, $U^n$, $\mathcal{T}^n_{\nu,\theta}$ and $T^n_{\theta}$ denote the local density, bulk velocity and the temperature tensor associated with $F^n=m +\sqrt{m }f^n$.
Making use of Lemma \ref{GammaEstimate}, the local existence follows from a standard argument.
(See \cite{Guo VMB}).
The only difference from the usual proof is that the strict positiveness of $\mathcal{T}^n_{\nu,\theta}$ and $T^n_{\delta}$ should be secured in each step, so that $\mathcal{M}_{\nu,\theta}(F^n)$ is well-defined. This is guaranteed by Lemma \ref{ULestimatesofRhoUT} (5) and Lemma \ref{UniformEst}.
\end{proof}
%
%
%
%
\section{Micro-macro system} In this section, we study the micro-macro system of (\ref{LBGK}) to fill up the degeneracy in the dissipation estimates in Theorem \ref{degenerate coercivity}. The dichotomy in the dissipation
estimate observed in Theorem \ref{degenerate coercivity} indicates that we should employ two different sets of micro-macro decomposition.
\subsection{Micro-macro system I {\bf($0<\theta\leq 1)$}:}
Define
\begin{align*}
a(x,t)&=\int_{\mathbb{R}^3_v\times \mathbb{R}^+_I}f\sqrt{m }dvdI,\cr
b_i(x,t)&=\int_{\mathbb{R}^3_v\times \mathbb{R}^+_I}fv_i\sqrt{m }dvdI, \quad(i=1,2,3)\cr
c(x,t)&=\int_{\mathbb{R}^3_v\times \mathbb{R}^+_I}f\left(\frac{(|v|^2-3)+(2I^{2/\delta}-\delta)}{\sqrt{2(3+\delta)}}\right)\sqrt{m }dvdI,
\end{align*}
so that the polyatomic projection operator $P_p$ is written
\begin{eqnarray*}
P_pf= a(x,t)\sqrt{m }+\sum_{i}b_i(x,t)v_i\sqrt{m }+c(x,t)\left(\frac{(|v|^2-3)+(2I^{2/\delta}-\delta)}{\sqrt{2(3+\delta)}}\right)\sqrt{m }.
\end{eqnarray*}
Since $L_{\nu,\theta}\{P_pf\}=0$ for  $0<\theta\leq1$ by Proposition \ref{Kernel L},
the linearized polyatomic BGK model (\ref{LBGK}) is decomposed into the macroscopic part and the microscopic parts as follows:
\begin{eqnarray*}
\{\partial_t+v\cdot\nabla_x\}\{P_pf\}=-\{\partial_t+v\cdot\nabla_x\}\{(I-P_p)f\}+L_{\nu,\theta}\{(I-P_p)f\}+\Gamma_{\nu,\theta}(f).\
\end{eqnarray*}
%
%
%
%
%
We then expand the l.h.s and r.h.s with respect to the following basis $(1\leq i,j\leq 3)$:
\begin{eqnarray}\label{basis}
\big\{
\sqrt{m },\,v_i\sqrt{m }, \,v_iv_j\sqrt{m }, \,v_i^2\sqrt{m }, \,v_i\big\{ |v|^2+(2I^{2/\delta}-\delta)\big\}\sqrt{m },
\,( 2I^{2/\delta}-\delta )\sqrt{m }
\big\}.
\end{eqnarray}
Comparing the coefficients on both sides, we derive the following micro-macro system:
\begin{align}\label{MicroMacro}
\begin{split}
\partial_t  a-3A_{\delta}\partial_tc&=\ell_{a}+h_{a},\\
\partial_tb_i+\partial_{x_i} a-3A_{\delta}\partial_{x_i}c&=\ell_{abi}+h_{abi},\\
\partial_{x_i}b_j+\partial_{x_j}b_i&=\ell_{ij}+h_{ij}\quad (i\neq j)\\
A_{\delta}\partial_tc+\partial_{x_i} b_i&=\ell_{bci}+h_{bci},\\
A_{\delta}\partial_{x_i}c&=\ell_{ci}+h_{ci},\\
A_{\delta}\partial_{t}c&=\ell_{ct}+h_{ct},
\end{split}
\end{align}
for $i,j =1,2,3$. Here, $A_{\delta}=1/\sqrt{2(3+\delta)}$, and
$\ell_{a}$, $\ell_{abci}$, $\ell_{ij}$, $\ell_{bci}$, $\ell_{ci}$ and $\ell_{ct}$ denote the coefficients of projection of
$-\{\partial_t+v\cdot\nabla_x\}\{(I-P_m)f\}+L_{\nu,\theta}\{(I-P_m)f\}$ onto the basis (\ref{basis}),
and $h_{a}$, $h_{abci}$, $h_{ij}$, $h_{bci}$, $h_{ci}$ and $h_{ct}$ are the projection of $\Gamma_{\nu,\theta}(f)$ onto (\ref{basis}).
Adding the last two equations to the first and second line, we get
\begin{align}\label{MicroMacro}
\partial_t  a&=(\ell_{a}+h_{ct})+3\{\ell_{ct}+h_{ct}\},\nonumber\\
\partial_tb_i+\partial_{x_i}a&=\big\{\ell_{abi}+h_{abi}\big\}+3\{\ell_{ci}+h_{ci}\},\nonumber\\
\partial_{x_i}b_j+\partial_{x_j}b_i&=\ell_{ij}+h_{ij}\quad (i\neq j)\\
A_{\delta}\partial_{t}c+\partial_{x_i} b_i&=\ell_{bci}+h_{bci},\nonumber\\
A_{\delta}\partial_{x_i}c&=\ell_{ci}+h_{ci},\nonumber\\
A_{\delta}\partial_t c&=\ell_{ct}+h_{ct},\nonumber.
\end{align}
The first 5 lines are, up to constant multiplication on the l.h.s and
additional slight complication in r.h.s, identical to the micro-macro system derived in \cite{Guo whole,Guo VMB, Yun} and the last line is easy to estimate.
Hence, following the same line of argument, we arrive at
\begin{align}\label{MicroMacroEstimatee}
\sum_{|\alpha|\leq N}\left\{\|\partial^{\alpha}a\|^2_{L^2_{x}}+\|\partial^{\alpha}b\|^2_{L^2_{x}}
+C_{\delta}\|\partial^{\alpha}c\|^2_{L^2_{x}}\right\}
\leq C \sum_{|\alpha|\leq N-1}\|\partial^{\alpha}(\ell_{\nu,\theta}+h_{\nu,\theta})\|^2_{L^2_{x}}
\end{align}
Note that we have slightly abused the notation to denote
$\ell_{\nu,\theta}=(\ell_{a}, \ell_{abi}$, $\ell_{ij}, \ell_{bci}, \ell_{ci},\ell_{ct})$ and $h_{\nu,\theta}=(h_{a}, h_{abci}, h_{ij}$, $h_{bci}, h_{ci}, h_{ct})$.
Now, $\ell_{\nu,\theta}$ and $h_{\nu,\theta}$ can be controlled in a standard way as follows:
%
%
%
%
\begin{eqnarray*}
\sum_{|\alpha|\leq N-1}\|\partial^{\alpha}(\ell_{\nu,\theta}+h_{\nu,\theta})\|^2_{L^2_{x}}
\leq C\sum_{|\alpha|\leq N}\|(I-P_p)\partial^{\alpha}f\|^2_{L^2_{x,v,I}}+
C\sqrt{M_0}\sum_{|\alpha|\leq N}\|\partial^{\alpha}f\|^2_{L^2_{x,v,I}}.
\end{eqnarray*}
%
%
%
%
Combining this with (\ref{MicroMacroEstimatee}), we derive
\begin{align*}
\sum_{|\alpha|\leq N}\|\partial^{\alpha}P_pf\|^2_{L^2_{x,v,I}}
&\leq \sum_{|\alpha|\leq N}\left\{\|\partial^{\alpha}a\|^2_{L^2_{x,v,I}}+\|\partial^{\alpha}b\|^2_{L^2_{x,v,I}}+\|\partial^{\alpha}c\|^2_{L^2_{x,v,I}}\right\}\cr
&\leq C\sum_{|\alpha|\leq N}\|\partial^{\alpha}(I-P_p)f\|^2_{L^2_{x,v,I}}
+C\sqrt{M}_0\sum_{|\alpha|\leq N}\|\partial^{\alpha}f\|^2_{L^2_{x,v,I}},
\end{align*}
which gives
\begin{eqnarray}\label{microMacroEstimatee}
\sum_{|\alpha|\leq N}\|P_p\partial^{\alpha}f\|^2_{L^2_{x,v,I}}
\leq C\sum_{|\alpha|\leq N}\|(I-P_p)\partial^{\alpha}f\|^2_{L^2_{x,v,I}}.
\end{eqnarray}
Then, we conclude from Proposition \ref{degenerate coercivity} and (\ref{microMacroEstimatee}) that
there exists $C_{\nu,\theta}>0$ such that
\begin{eqnarray}\label{Coercivity}
\sum_{|\alpha|\leq N}\langle L_{\nu,\theta}\partial^{\alpha}f,\partial^{\alpha}f\rangle_{L^2_{x,v,I}}\leq -C_{\nu,\theta}\sum_{|\alpha|\leq N}\|\partial^{\alpha}f(t)\|^2_{L^2_{x,v,I}}
\end{eqnarray}
for sufficiently small $\mathcal{E}(t)$.

%
%
%
%
%
%
%
%
\subsection{Micro-macro system II~{\bf $(\theta=0)$:}}
Recalling Proposition \ref{Kernel L}, we see that in this case, (\ref{LBGK}) should be decomposed with respect to the monatomic-like projection $P_mf$.
In view of this observation, we define
\begin{align*}
a(x,t)&=\int_{\mathbb{R}^3_v\times \mathbb{R}^+_I}f\sqrt{m }dvdI,\cr
b_i(x,t)&=\int_{\mathbb{R}^3_v\times \mathbb{R}^+_I}fv_i\sqrt{m }dvdI, \quad(i=1,2,3)\cr
c(x,t)&=\int_{\mathbb{R}^3_v\times \mathbb{R}^+_I}f\left(\frac{|v|^2-3}{\sqrt{6}}\right)\sqrt{m }dvdI,\cr
d(x,t)&=\int_{\mathbb{R}^3_v\times \mathbb{R}^+_I}f\left(\frac{2I^{2/\delta}-\delta}{\sqrt{2\delta}}\right)\sqrt{m }dvdI,
\end{align*}
to write
\begin{align*}
P_mf&= a\sqrt{m }+\sum_{i}b_iv_i\sqrt{m }
+c\left(\frac{|v|^2-3}{\sqrt{6}}\right)\sqrt{m }
+d\left(\frac{2I^{2/\delta}-\delta}{\sqrt{2\delta}}\right)\sqrt{m }.
\end{align*}
We recall from  Proposition \ref{Kernel L} that $L_{\nu,0}(P_mf)=0$, and divide (\ref{LBGK}) into the macroscopic part and the microscopic part as follows:
\begin{eqnarray*}
\{\partial_t+v\cdot\nabla_x\}\{P_mf\}=-\{\partial_t+v\cdot\nabla_x\}\{(I-P_m)f\}+L_{\nu,0}\{(I-P_m)f\}+\Gamma_{\nu,0}(f).
\end{eqnarray*}
%
%
%
%
%
Comparing coefficients corresponding to the following basis:
%
%
%
%
%
\begin{eqnarray}\label{basis2}
\big\{
\sqrt{m},v_i\sqrt{m}, v_iv_j\sqrt{m}, v_i^2\sqrt{m}, v_i|v|^2\sqrt{m}, \big(2I^{2/\delta}-\delta\big)\sqrt{m},
\big(2I^{2/\delta}-\delta\big)v_i\sqrt{m}
\big\}.
\end{eqnarray}
We obtain $(i,j=1,2,3)$:
\begin{align}\label{MicroMacro0}
\partial_t a-3/\sqrt{6}\partial_tc&=\ell_{a}+h_{a},\nonumber\\
\partial_tb_i+\partial_{x_i}a-3/\sqrt{6}\partial_{x_i}c&=\ell_{abci}+h_{abci},\nonumber\\
\partial_{x_i}b_j+\partial_{x_j}b_i&=\ell_{ij}+h_{ij},\\
\partial_{x_i} b_i+1/\sqrt{6}\partial_{t}c&=\ell_{bci}+h_{bci},\nonumber\\
1/\sqrt{6}\partial_{x_i}c&=\ell_{ci}+h_{ci},\nonumber\\
\partial_td&=\ell_{dt}+h_{dt},\nonumber\\
\partial_{x_i}d&=\ell_{dxi}+h_{dxi},\nonumber
\end{align}
where $\ell_{a}$, $\ell_{abci}$, $\ell_{ij}$, $\ell_{bci}$, $\ell_{ci}$, $\ell_{dt}$ and $\ell_{dxi}$ are obtained by taking the inner product of
$-\{\partial_t+v\cdot\nabla_x\}\{(I-P_m)f\}+L_{\nu,0}\{(I-P_m)f\}$ with the basis in (\ref{basis2}),
and $h_{a}$, $h_{abci}$, $h_{ij}$, $h_{bci}$, $h_{ci}$, $h_{dt}$ and $h_{dxi}$ are the inner product of $\Gamma_{\nu,0}(f)$ with (\ref{basis2}).
Setting $\tilde{a}=a-3/\sqrt{6}c$ and $\tilde{c}=1/\sqrt{6}c$, we derive from (\ref{MicroMacro0})
\begin{align}\label{MicroMacro}
\partial_t \tilde{a}&=\ell_{a}+h_{a},\nonumber\\
\partial_tb_i+\partial_{x_i}\tilde{a}&=\ell_{abi}+h_{abi},\nonumber\\
\partial_{x_i}b_j+\partial_{x_j}b_i&=\ell_{ij}+h_{ij},\\
\partial_{x_i} b_i+\partial_{t}\tilde{c}&=\ell_{bc1}+h_{bc1},\nonumber\\
\partial_{x_i}\tilde{c}&=\ell_{ci}+h_{ci},\nonumber\\
\partial_td&=\ell_{dt}+h_{dt},\nonumber\\
\partial_{x_i}d&=\ell_{dxi}+h_{dxi}.\nonumber
\end{align}
Except for the last two line, which are decoupled from the other equations, and therefore, estimated easily, this is identical to the micro-macro system
for the usual Boltzmann equation or BGK model. Therefore, we can derive
\begin{eqnarray*}
\sum_{|\alpha|\leq N}\left\{\|\partial^{\alpha}\widetilde{a}\|^2_{L^2_{x}}+\|\partial^{\alpha}b\|^2_{L^2_{x}}
+\|\partial^{\alpha}\widetilde{c}\|^2_{L^2_{x}}
+\|\partial^{\alpha}d\|^2_{L^2_{x}}\right\}
\leq C \sum_{|\alpha|\leq N-1}\|\partial^{\alpha}(\ell_{\nu,0}+h_{\nu,0})\|^2_{L^2_{x}}.
\end{eqnarray*}
Here  we used the simplified notation again: $\ell_{\nu,0}=(\ell_{a}, \ell_{abci}, \ell_{ij}, \ell_{bci}, \ell_{ci},\ell_{dt},\ell_{dxi})$ and $h_{\nu,0}=(h_{a}, h_{abci}, h_{ij}$, $h_{bci}, h_{ci},h_{dt},h_{dxi})$.
We now take  $\frac{3\sqrt{6}}{10}<\varepsilon^2<\frac{2}{\sqrt{6}}$ and set
\begin{eqnarray*}
C_{\varepsilon}=\min\left\{(1-\sqrt{6}\varepsilon^2), \,3/5-\sqrt{6}/(2\varepsilon^2)\right\}>0
\end{eqnarray*}
to obtain
\begin{eqnarray*}
C_{\varepsilon}\sum_{|\alpha|\leq N}\left\{\|\partial^{\alpha}a\|^2_{L^2_{x}}+\|\partial^{\alpha}b\|^2_{L^2_{x}}+\|\partial^{\alpha}c\|^2_{L^2_{x}}
+\|\partial^{\alpha}d\|^2_{L^2_x}\right\}
\leq C \sum_{|\alpha|\leq N-1}\|\partial^{\alpha}(\ell_{\nu,0}+h_{\nu,0})\|^2_{L^2_{x}}.
\end{eqnarray*}
Then, by a similar argument as in the previous case, we can control $P_mf$ by $(I-P_m)f$:
\begin{eqnarray}\label{microMacroEstimate}
\sum_{|\alpha|\leq N}\|P_m\partial^{\alpha}f\|^2_{L^2_{x,v,I}}
\leq C\sum_{|\alpha|\leq N}\|(I-P_m)\partial^{\alpha}f\|^2_{L^2_{x,v,I}}.
\end{eqnarray}
which, combined with the dissipation estimate in Theorem \ref{degenerate coercivity} (2), implies
\begin{eqnarray}\label{Coercivity}
\sum_{|\alpha|\leq N}\langle L_{\nu,0}\partial^{\alpha}f,\partial^{\alpha}f\rangle_{L^2_{x,v,I}}\leq -C_{\nu}\sum_{|\alpha|\leq N}\|\partial^{\alpha}f(t)\|^2_{L^2_{x,v,I}}.
\end{eqnarray}

%
%
%
%
%
%
%
%

\section{Proof of Theorem \ref{main theorem}}
%
%
%
%
We have derived all the necessary estimates to close the energy estimate.
Let $f$ be the smooth local in time solution obtained in Theorem \ref{localExistence}.
Take derivatives on $x$, $t$ and $I$ of (\ref{LBGK}):
\begin{eqnarray*}
\partial_t \partial^{\alpha}f+v\cdot\nabla_x \partial^{\alpha}f=L_{\nu,\theta}\partial^{\alpha}f+\partial^{\alpha}\Gamma_{\nu,\theta}(f),
\end{eqnarray*}
and take inner product with $\partial^{\alpha}f$ to get
\begin{eqnarray*}
\frac{1}{2}\frac{d}{dt}\|\partial^{\alpha}f\|^2_{L^2_{x,v,I}}\leq \langle L_{\nu,\theta}\partial^{\alpha}f,\partial^{\alpha}f\rangle_{L^2_{x,v,I}}+\langle\partial^{\alpha}\Gamma_{\nu,\theta}(f),\partial^{\alpha}f\rangle_{L^2_{x,v,I}}.
\end{eqnarray*}
Making use of the coercivity estimate in the previous section yields
\begin{eqnarray*}
E^{\alpha}_0:\quad\frac{1}{2}\frac{d}{dt}\|\partial^{\alpha}f\|^2_{L^2_{x,v,I}}\!\!+C\sum_{|\alpha|\leq N}\|\partial^{\alpha}f\|^2_{L^2_{x,v,I}}\leq
C\sqrt{\mathcal{E}(t)}\mathcal{D}(t).
\end{eqnarray*}
For the energy estimate involving velocity derivatives, we apply $\partial^{\alpha}_{\beta}$ to (\ref{LBGK})
\begin{eqnarray*}
\big\{\partial_t+v\cdot\nabla_x +a_{\nu,\theta}\big\}\partial^{\alpha}_{\beta}f
=\sum_{|\beta_1|\neq0}\partial_{\beta_1}v\cdot\nabla_x\partial^{\alpha}_{\beta-\beta_1}f
+\partial_{\beta}P_{\nu,\theta}\partial^{\alpha}f+\partial^{\alpha}_{\beta}\Gamma_{\nu,\theta}(f,f),
\end{eqnarray*}
where $a_{\nu,\theta}=1/(1-\nu+\nu\theta)$. Then,
take inner product with $\partial^{\alpha}_{\beta}f$ and use H\"{o}lder inequality with Lemma \ref{GammaEstimate} to derive
\begin{eqnarray*}
\begin{split}
E^{\alpha}_{\beta}:\quad\frac{1}{2}\frac{d}{dt}\|\partial^{\alpha}_{\beta}f\|^2_{L^2_{x,v,I}}+a_{\nu,\theta}\|\partial^{\alpha}_{\beta}f\|^2_{L^2_{x,v,I}}
&\leq C\sum_{i}\|\partial^{\alpha+e_i}_{\beta-e_i}f\|_{L^2_{x,v,I}}\|\partial^{\alpha}_{\beta}f\|_{L^2_{x,v,I}}\cr
&+C\|\partial^{\alpha}f\|_{L^2_{x,v,I}}\|\partial^{\alpha}_{\beta}f\|_{L^2_{x,v,I}}+C\sqrt{\mathcal{E}(t)}\mathcal{D}(t),
\end{split}
\end{eqnarray*}
where $e_i (i=1,2,3)$ is the standard basis of $\mathbb{R}^3_x$. By Young's inequality, we can split the first two terms in the r.h.s as
\begin{align*}
\|\partial^{\alpha+e_i}_{\beta-e_i}f\|_{L^2_{x,v,I}}\|\partial^{\alpha}_{\beta}f\|_{L^2_{x,v,I}}&\leq
C_{\varepsilon}\|\partial^{\alpha+e_i}_{\beta-e_i}f\|^2_{L^2_{x,v,I}}+\varepsilon\|\partial^{\alpha}_{\beta}f\|^2_{L^2_{x,v,I}}\cr
\|\partial^{\alpha}f\|_{L^2_{x,v,I}}\|\partial^{\alpha}_{\beta}f\|_{L^2_{x,v,I}}&\leq
C_{\varepsilon}\|\partial^{\alpha}f\|^2_{L^2_{x,v,I}}+\varepsilon\|\partial^{\alpha}_{\beta}f\|^2_{L^2_{x,v,I}}
\end{align*}
whose $\varepsilon$ terms can be absorbed in the production term in the l.h.s to get
\begin{align*}
E^{\alpha}_{\beta}:\quad\frac{1}{2}\frac{d}{dt}\|\partial^{\alpha}_{\beta}f\|^2_{L^2_{x,v,I}}+\frac{1}{2}a_{\nu,\theta}\|\partial^{\alpha}_{\beta}f\|^2_{L^2_{x,v,I}}
&\leq C_{\varepsilon}\sum_{i}\|\partial^{\alpha+e_i}_{\beta-e_i}f\|^2_{L^2_{x,v,I}}+C_{\varepsilon}\|\partial^{\alpha}f\|^2_{L^2_{x,v,I}}\cr
&+ C\sqrt{\mathcal{E}(t)}\mathcal{D}(t).
\end{align*}
We then observe that the r.h.s of $\sum_{|\beta|=m+1}E^{\alpha}_{\beta}$ can be absorbed into the production terms in the
lower order estimate:
$C_{m}\sum_{|\beta|\leq m}E^{\alpha}_{\beta}$ for sufficiently large $C_m$. This observation enables one to find constants $C^1_m$, $C^2_m$  inductively such that
\begin{eqnarray*}
\sum_{\substack{|\alpha|+|\beta|\leq N,\cr|\beta|\leq m}}\left\{C^1_m\frac{d}{dt}\|\partial^{\alpha}_{\beta}f\|^2_{L^2_{x,v,I}}
+C^2_m\|\partial^{\alpha}_{\beta}f\|^2_{L^2_{x,v,I}}\right\}\leq C_N\sqrt{\mathcal{E}(t)}\mathcal{D}(t).
\end{eqnarray*}
Now, the standard continuity argument gives the global existence for (\ref{LBGK}) \cite{Guo VMB}.
This completes the proof.
\begin{align*}
\end{align*}



%
%
\bibliographystyle{amsplain}

\end{document}